\definecolor{orcidlogocol}{HTML}{A6CE39}
\tikzset{
  orcidlogo/.pic={
    \fill[orcidlogocol] svg{M256,128c0,70.7-57.3,128-128,128C57.3,256,0,198.7,0,128C0,57.3,57.3,0,128,0C198.7,0,256,57.3,256,128z};
    \fill[white] svg{M86.3,186.2H70.9V79.1h15.4v48.4V186.2z}
                 svg{M108.9,79.1h41.6c39.6,0,57,28.3,57,53.6c0,27.5-21.5,53.6-56.8,53.6h-41.8V79.1z M124.3,172.4h24.5c34.9,0,42.9-26.5,42.9-39.7c0-21.5-13.7-39.7-43.7-39.7h-23.7V172.4z}
                 svg{M88.7,56.8c0,5.5-4.5,10.1-10.1,10.1c-5.6,0-10.1-4.6-10.1-10.1c0-5.6,4.5-10.1,10.1-10.1C84.2,46.7,88.7,51.3,88.7,56.8z};
  }
}
\newcommand\orcidicon[1]{\href{https://orcid.org/#1}{\mbox{\scalerel*{
\begin{tikzpicture}[yscale=-1,transform shape]
\pic{orcidlogo};
\end{tikzpicture}
}{|}}}}
\declaretheorem[name=Definition,style=definition,numberwithin=section,qed={\hfill $\triangle$}]{definition}
\declaretheorem[name=Example,style=remark,numberwithin=section,qed={\hfill $\triangle$}]{example}
\declaretheorem[name=Remark,style=remark,numberwithin=section,qed={\hfill $\triangle$}]{remark}
\crefname{observation}{observation}{observations}
\Crefname{observation}{Observation}{Observations}
\crefname{conjecture}{conjecture}{conjectures}
\Crefname{conjecture}{Conjecture}{Conjectures}
\crefname{assumption}{assumption}{assumptions}
\Crefname{assumption}{Assumption}{Assumptions}
\begin{document}

\def\im{\mathrm{Im}\, }
\def\re{\mathrm{Re}\, }
\def\diam{\mathrm{diam}\, }
\def\Hess{\mathrm{Hess}\, }
\def\rank{\mathrm{rank }\,  }
\def\rg{\mathrm{rg }\,  }
\def\d{\mathrm{d}}
\def\p{\partial }
\def\pr{\mathrm{pr}}
\def\D{\mathrm{D}}
\def\id{\mathrm{id}}
\def\Id{\mathrm{Id}}
\def\e{\epsilon}
\def\C{\mathbb{C}}
\def\Z{\mathbb{Z}}
\def\Q{\mathbb{Q}}
\def\N{\mathbb{N}}
\def\R{\mathbb{R}}
\def\K{\mathbb{K}}
\def\T{\mathbb{T}}
\def\diag{\mathrm{diag}\, }

\newtheorem{theorem}{Theorem}[section]
\newtheorem{corollary}[theorem]{Corollary}
\newtheorem{lemma}[theorem]{Lemma}
\newtheorem{proposition}[theorem]{Proposition}
\newtheorem{prop}[theorem]{Proposition}
\newtheorem{conjecture}[theorem]{Conjecture}
\newtheorem{assumption}{Assumption}[section]

\numberwithin{equation}{section}


\makeatletter
\renewcommand*\env@matrix[1][\arraystretch]{%
  \edef\arraystretch{#1}%
  \hskip -\arraycolsep
  \let\@ifnextchar\new@ifnextchar
  \array{*\c@MaxMatrixCols c}}
\makeatother

\newsavebox{\smlmata}
\savebox{\smlmata}{${\scriptstyle\overline q(0)=\left(\protect\begin{smallmatrix}0.2\\ 0.1\protect\end{smallmatrix}\right) = \overline q(5)}$}
\newsavebox{\smlmatb}
\savebox{\smlmatb}{${\scriptstyle q =\left(\protect\begin{smallmatrix} -1&2\\3&1\protect\end{smallmatrix}\right) \overline q}$}
\newsavebox{\smlmatc}
\savebox{\smlmatc}{${\scriptstyle p(0)=\left(\protect\begin{smallmatrix}0.85\\ 1.5\protect\end{smallmatrix}\right) = p(2 \pi/3)}$}

\newcommand{\dd}{\mathrm{d}}
\DeclareRobustCommand\marksymbol[2]{\tikz[#2,scale=1.2]\pgfuseplotmark{#1};}
\DeclareRobustCommand{\CollLine}{\raisebox{2pt}{\tikz{\draw[-.,red,dash pattern={on 7pt off 2pt on 3pt off 2pt},line width = 1.5pt](0,0) -- (9mm,0);}}}
\DeclareRobustCommand{\RefLine}{\raisebox{2pt}{\tikz{\draw[-,black,dash pattern={on 7pt off 2pt},line width = 1.5pt](0,0) -- (9mm,0);}}}
\DeclareRobustCommand{\ConvLine}{\raisebox{2pt}{\tikz{\draw[-,blue,line width = 1.5pt](0,0) -- (9mm,0);}}}
\DeclareRobustCommand{\redtriangle}{\raisebox{0.5pt}{\tikz{\node[draw,scale=0.3,regular polygon, regular polygon sides=3,fill=red!10!white,rotate=0](){};}}}

\title[Lagrangian intersections and catastrophes]{Local intersections of Lagrangian manifolds correspond to catastrophe theory}

\author[Christian Offen]{Christian Offen {\protect \orcidicon{0000-0002-5940-8057}}}
\address{Department of Mathematics, Paderborn University, Warburger Straße 100, 33098 Paderborn, Germany}
\email{christian.offen@uni-paderborn.de}
\date{\today}
\keywords{Lagrangian intersections, contact equivalence, catastrophe theory}
\subjclass[2020]{53D12, 58K35, 37J20, 58K05}

\begin{abstract}
Two smooth map germs are right-equivalent if and only if they generate two Lagrangian submanifolds in a cotangent bundle which have the same contact with the zero-section.
In this paper we provide a reverse direction to this classical result of Golubitsky and Guillemin.
Two Lagrangian submanifolds of a symplectic manifold have the same contact with a third Lagrangian submanifold if and only if the intersection problems correspond to stably right equivalent map germs. We, therefore, obtain a correspondence between local Lagrangian intersection problems and catastrophe theory while the classical version only captures tangential intersections.
The correspondence is defined independently of any Lagrangian fibration of the ambient symplectic manifold, in contrast to other classical results. 
Moreover, we provide an extension of the correspondence to families of local Lagrangian intersection problems.
This gives rise to a framework which allows a natural transportation of the notions of catastrophe theory such as stability, unfolding and (uni-)versality to the geometric setting such that we obtain a classification of families of local Lagrangian intersection problems.
An application is the classification of Lagrangian boundary value problems for symplectic maps.
\end{abstract}

\maketitle

\section{Introduction}\label{sec:intro}


Local singularities of smooth, scalar-valued functions have been studied extensively under the headlines {\em catastrophe theory} and {\em singularity theory}. The local behaviour of the set of critical points of a smooth function under perturbations is related to bifurcation phenomena in dynamical systems \cite{Arnold1992,Arnold1994}. Applications to optics, economical models, and laser physics are reviewed in \cite{poston1978catastrophe}, for instance.
Thanks to foundational work by Whitney, Thom, Mather, Arnold, and others, classification results for singularities are known \cite{Arnold2012,lu1976singularity,Wassermann1974}.
Of fundamental importance for the classification results is the notion of right equivalence of function germs.

\begin{definition}[right equivalence of function germs]
Two germs of smooth functions $f,g \colon (\R^k,0) \to (\R,0)$ are {\em right equivalent} if there exists a germ of a diffeomorphism $h \colon (\R^k,0) \to (\R^k,0)$ such that $f =  g \circ h$.
\end{definition}

In \cite{golubitsky1975contact} Golubitsky and Guillemin show that the question whether two function germs are right equivalent has a geometric analogue, namely whether two Lagrangian submanifolds in a cotangent bundle have the same contact with the zero-section.

\begin{definition}[tangential contact of Lagrangian intersections (in cotangent bundles)]
Let $\Lambda$, $\Lambda'$ be two Lagrangian submanifolds in the cotangent bundle $T^\ast X$ intersecting at a point $z \in X \subset T^\ast X$ tangentially. The submanifolds $\Lambda$, $\Lambda'$ have the {\em same contact with $X$ at $z$} if and only if there exists a symplectomorphism $\Phi$ defined on an open neighbourhood $U$ of $z \in T^\ast X$ such that
\[
\Phi(\Lambda \cap U) = \Lambda' \cap U, \quad \Phi(z) = z, \quad \Phi(X\cap U) = X \cap U.
\qedhere\]
\end{definition}

Building on results by Tougeron \cite{Tougeron1968} they prove the following theorem.

\begin{theorem}[\cite{golubitsky1975contact}]\label{thm:GGinCOT}
Let $U$ be an open neighbourhood of 0 in $\R^k$.
The germs at $0$ of two smooth function $f,g \colon U \to \R$ with $f(0)=0=g(0)$, vanishing gradients at 0, i.e.\ $\d f|_0 = 0 = \d g|_0$, and vanishing Hessian matrices at $0$ (in any local coordinate system) are right equivalent if and only if the Lagrangian manifolds $\Lambda=\d f(U)$ and $\Lambda'=\d g(U)$ have the same tangential contact with the zero-section at 0 in $T^\ast U$.
\end{theorem}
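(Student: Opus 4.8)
The statement is an equivalence; the forward implication is elementary and the reverse carries the real weight. \emph{Forward direction.} Suppose $f = g\circ h$ for a diffeomorphism germ $h\colon(\R^k,0)\to(\R^k,0)$. The plan is to realise this relation by the cotangent lift of $h$: define $\Phi\colon T^\ast U\to T^\ast U$ by $\Phi(x,p) = \bigl(h(x),((\D h_x)^{-1})^\ast p\bigr)$, the unique symplectomorphism covering $h$ that preserves the tautological one-form. The chain rule gives $\d f_x = (\D h_x)^\ast\,\d g_{h(x)}$, and since $((\D h_x)^{-1})^\ast(\D h_x)^\ast = \Id$ one computes $\Phi(x,\d f_x) = (h(x),\d g_{h(x)})$, so $\Phi(\Lambda)=\Lambda'$. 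Because $\Phi(x,0)=(h(x),0)$ and $h(0)=0$, the map $\Phi$ preserves the zero-section $X$ and fixes $z=(0,0)$, and it is symplectic by construction. Right equivalence preserves the vanishing of value, gradient and Hessian at $0$, so both intersections are tangential and $\Phi$ exhibits the same tangential contact.

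\emph{Reverse direction; reduction.} Suppose a local symplectomorphism $\Phi$ with $\Phi(\Lambda)=\Lambda'$, $\Phi(z)=z$, $\Phi(X)=X$ is given. First I would strip off the base motion. The restriction $h:=\Phi|_X\colon(U,0)\to(U,0)$ is a diffeomorphism germ, and replacing $\Phi$ by $\widehat h^{-1}\circ\Phi$ (with $\widehat h$ the cotangent lift of $h$) yields a symplectomorphism that fixes $X$ pointwise and maps $\Lambda$ to $\mathrm{graph}\,\d(g\circ h)$. As $g\circ h$ is right equivalent to $g$, it suffices to treat the case $\Phi|_X=\Id$. In this case the identity $(F\circ\Phi)|_X = F|_X$, applied to the defining functions $p_i-\partial_i g$ of $\Lambda'$ and combined with the fact that the ideal of $\Lambda$ restricts on $X$ to the Jacobian ideal of $f$, shows $(\partial_1 g,\dots,\partial_k g)\subseteq(\partial_1 f,\dots,\partial_k f)$; the symmetric argument for $\Phi^{-1}$ gives equality, so $\nabla f$ and $\nabla g$ are contact equivalent. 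This already anchors the correspondence, but it records only the gradients up to contact equivalence.

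\emph{Reverse direction; recovering right equivalence.} The hard part will be upgrading contact equivalence of the gradients to right equivalence of the potentials, and this is where the tangential hypothesis $\Hess f|_0 = 0 = \Hess g|_0$ enters, placing $\nabla f,\nabla g$ among map germs with vanishing $1$-jet. The plan is a homotopy (Moser/Mather) argument that transports the potential itself rather than merely its Jacobian ideal. After the reduction $\Phi$ is tangent to the identity along $X$, so I would write it as the time-$1$ map of a Hamiltonian isotopy whose Hamiltonian vanishes to second order along $X$, and integrate the associated Hamilton--Jacobi equation to obtain a family of generating functions $f_t$ with $f_0=f$ and $f_1=g$. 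The vanishing of gradient and Hessian at $0$ then forces each infinitesimal step to lie in the tangent space to the right-equivalence orbit, $\dot f_t\in \mathfrak m\cdot(\partial_1 f_t,\dots,\partial_k f_t)$, whence the family is trivialised by integrating a time-dependent vector field of base coordinate changes, producing the diffeomorphism germ with $f=g\circ(\cdot)$.

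I expect the genuine obstacle to be precisely this last step. A symplectomorphism fixing $X$ need not preserve the Lagrangian fibration $\pi\colon T^\ast U\to U$, so it does not descend to a base coordinate change; one must show that the fibre-direction shearing contributes only terms inside the Jacobian ideal, so that the induced action on generating functions stays within the smaller right-equivalence orbit and does not escape into the full contact-equivalence orbit. It is exactly here that the potential (gradient) structure and finite determinacy are indispensable.
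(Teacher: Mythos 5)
Your forward direction is correct and is precisely the paper's own argument: the cotangent lift of the right equivalence is a symplectomorphism that preserves the zero-section, fixes the origin, and carries one graph to the other (this is the first half of the proof of \cref{prop:TouchContactifRequiv}).

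The reverse direction, however, is a programme with a hole at exactly the load-bearing step, as you yourself concede. Two of its ingredients are unestablished. First, writing the reduced symplectomorphism as the time-$1$ map of a Hamiltonian isotopy whose Hamiltonian vanishes to second order along $X$, and passing via Hamilton--Jacobi to a path of generating functions $f_t$, is not proven: it needs at least local connectedness of the relevant group of symplectomorphism germs and graphicality of every intermediate Lagrangian $\Phi_t(\Lambda)$. Second, the inclusion $\dot f_t\in\mathfrak m\cdot\langle\p_1 f_t,\ldots,\p_k f_t\rangle$ with coefficients smooth in $(t,x)$ is asserted, not derived; your Jacobian-ideal computation is sound but only yields contact equivalence of the gradients, which, as you note, is strictly weaker than what is claimed. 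The paper closes this gap without isotoping $\Phi$ at all: since $\Phi$ preserves $X$, the composition $\pi'=\pi\circ\Phi^{-1}$ is a second cotangent bundle projection, and $\Phi\circ\d f$ is a section of $\pi'$ with image $\Lambda'$, so $f$ generates $\Lambda'$ in the new structure while $g$ generates it in the old one. The theorem thus reduces to invariance of the right-equivalence class of a generating function under change of cotangent bundle structure (\cref{prop:TouchWellDefined}), which rests on two concrete facts: the primitive $1$-forms of two structures differ by $\d H$ with $H(x,\xi)=\sum_{i,j}h_{ij}(x,\xi)\,\xi_i\xi_j$ quadratic in the fibre coordinates, whence the generating functions satisfy $g\circ k=f+H(\cdot,\nabla f)$ for a base diffeomorphism $k$ (\cref{lem:CompareCotStr}); and the \emph{linear} interpolation $\psi_t=f+tH(\cdot,\nabla f)$ is trivialised by the Moser field $w=-B^{-T}\mathcal H\nabla f$, where invertibility of $B(t,x)$ near $x=0$ --- hence solvability for $w$ with $w(0,t)=0$ --- is exactly where the hypotheses $\nabla f(0)=0$ and $\Hess f(0)=0$ enter (\cref{lem:HFormula}). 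The quadratic-in-$\xi$ normal form of $H$ is the structural fact that turns your hoped-for inclusion into an explicit computation, and the linear interpolation in the generating function replaces your Hamiltonian isotopy; without these, the fibre-shearing worry you raise at the end remains unresolved.
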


In the above theorem $\d f(U)$ and $\d g(U)$ denote the image of $U \subset \R^k$ under the 1-form $\d f$ or $\d g$, respectively, where 1-forms are interpreted as maps $U \to T^\ast U$, i.e.\ as sections of the cotangent bundle $T^\ast U \to U$. 


The strength of \cref{thm:GGinCOT} is related to the {\em parametric Morse lemma}, which says that up to a local change of coordinates any function germ $F \colon (\R^K,0) \to (\R,0)$ with critical point at $0$ can be split into a nondegenerate quadratic function $Q_F$ and a fully degenerate part $f$, i.e.\ a representative of the germ $F$ can be brought into the form $f(x^1,\ldots,x^{k(F)}) + Q_F(x^{k(F)+1},\ldots, x^K)$, where $K-k(F)$ is the rank of the Hessian matrix of $F$ at 0 \cite[\S 14.12]{brocker1975}. Therefore, $F,G \colon (\R^K,0) \to (\R,0)$ are right equivalent if and only if the signatures of $Q_F$ and $Q_G$ coincide and their fully reduced parts $f$ and $g$ fulfil the geometric condition in \cref{thm:GGinCOT}.

Thus, \cref{thm:GGinCOT} is very appealing because it allows us to turn an analysis problem into a geometric problem. 
The geometric problem itself, i.e.\ the description of intersecting Lagrangian manifolds, is, however, important in its own right. 
For instance, in dynamical systems, where intersections of Lagrangian invariant manifolds in phase spaces encode important information about the dynamics of the system \cite{Haro2000,Lomel2008}.
Global aspects such as lower bounds for the number of intersection points of Lagrangian intersections (e.g.\ the Arnold--Givental conjecture \cite{Frauenfelder2004}) have motivated many developments in Symplectic Topology such as Flour homology \cite{fukaya2010lagrangian}, the theory of pseudo-holomorphic curves, and global methods based on generating functions. We refer to \cite{LagrangianInterTheo} and references therein. In this paper, however, we study local aspects only.

Boundary value problems in Hamiltonian systems can be phrased as Lagrangian intersection problems and local properties of the intersections are of high significance for a description of the bifurcation behaviour of solutions \cite{WeinstBvPHam}.
It is, therefore, desirable, to obtain a reverse direction of \cref{thm:GGinCOT}, i.e.\ a statement which allows to turn the description of (possibly non-tangentially) intersecting Lagrangian manifolds in arbitrary symplectic manifolds into a problem in classical singularity theory or catastrophe theory.
This paper provides an important theoretical foundation for works in Numerical Analysis and Geometric Numerical Integration, where for the computation of bifurcation diagrams of solutions to Hamiltonian boundary value problems a correct exploitation of symplectic structure is crucial \cite{PDEBifur,bifurHampaper,obstructionPaper,numericalPaperShort,
	numericalPaper,PhDThesis}.

We will show that we can assign smooth function germs to local Lagrangian intersection problems such that the following theorem holds.

\begin{theorem}\label{thm:CorrSingularitiesDetail}
Let $X,\Lambda$ and $X',\Lambda'$ be Lagrangian submanifolds of a symplectic manifold $Z$ such that $\Lambda$ intersects $X$ in an isolated point $z$ and $\Lambda'$ intersects $X'$ in an isolated point $z'$. Let $f$ be the function germ assigned to the problem $z \in X \cap \Lambda$ and $f'$ be the function germ assigned to the problem $z' \in X' \cap \Lambda'$. The germs $f$ and $f'$ are stably right equivalent\footnote{\label{footnote:stablyrequiv}Two function germs are stably right equivalent if they become right equivalent after adding nondegenerate quadratic forms in new variables (see \cref{def:stablyrequiv}).} if and only if there exists a local symplectomorphism $\Phi$ mapping an open neighbourhood $U$ of $z$ to an open neighbourhood $U'$ of $z'$ with
\[
\Phi(z)=z', \quad \Phi(X\cap U) = X'\cap U', \quad \Phi(\Lambda \cap U)= \Lambda' \cap U'.
\]
\end{theorem}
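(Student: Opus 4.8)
The plan is to reduce the geometric intersection problem to the classical cotangent-bundle picture by choosing suitable Lagrangian fibrations, and then to show that the resulting function germs are well-defined up to stable right equivalence independently of these choices. The key tool will be a Darboux-type / Weinstein normal form argument: given the Lagrangian submanifold $X$ and the point $z$, I would invoke the Lagrangian neighbourhood theorem to find a symplectomorphism from a neighbourhood $U$ of $z$ in $Z$ onto a neighbourhood of the zero-section in $T^\ast X$, carrying $X$ to the zero-section and $z$ to the origin. Under this identification the second Lagrangian $\Lambda$ becomes a Lagrangian submanifold of $T^\ast X$ meeting the zero-section at the origin, which (after possibly shrinking and applying a further symplectomorphism) is the image $\d S(X)$ of the differential of a generating function germ $S \colon (X,z) \to (\R,0)$. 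The germ $f$ assigned to the problem $z \in X \cap \Lambda$ should be exactly such a generating function (or its fully reduced part), so the entire construction is a geometric encoding of $S$.

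With this normal form in hand, the forward direction becomes almost tautological: a local symplectomorphism $\Phi$ with $\Phi(z)=z'$, $\Phi(X\cap U)=X'\cap U'$, $\Phi(\Lambda\cap U)=\Lambda'\cap U'$ transports one normal form to the other, and the classical Golubitsky--Guillemin result (\cref{thm:GGinCOT}) together with the parametric Morse lemma converts the equality of contact into right equivalence of the reduced germs $f,f'$; the nondegenerate quadratic parts that were split off account precisely for the passage from right equivalence to \emph{stable} right equivalence. For the reverse direction, I would start from a stable right equivalence $f = f' \circ h$ after adding nondegenerate quadratic forms, realise the added quadratic forms geometrically by stabilising each intersection problem (embedding $T^\ast X$ as a coisotropic or Lagrangian factor inside a larger $T^\ast(X\times\R^m)$ whose extra fibre directions carry the quadratic form), and then use the ``if'' direction of \cref{thm:GGinCOT} to produce a local symplectomorphism of the cotangent bundles fixing the zero-section; conjugating back through the two Weinstein charts yields the desired $\Phi$.

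The main obstacle, and the step that needs genuine care rather than a routine citation, is showing that the function germ assigned to a local intersection problem is \emph{well-defined independently of the auxiliary choices} — the Weinstein chart, the Lagrangian fibration, and the generating function — up to stable right equivalence, and in particular that a change of these choices changes $S$ only by adding nondegenerate quadratic forms and composing with a diffeomorphism. This is where the word ``stably'' is doing real work: two different generating functions for the same Lagrangian germ relative to different fibrations can differ in the number of variables and by a quadratic form, so one must verify that all such ambiguities are absorbed by stable right equivalence. I expect this to require a careful analysis of how generating functions transform under fibre-preserving and more general symplectomorphisms, likely using the standard addition-of-variables and reduction lemmas for generating functions (the same mechanism underlying the equivalence of generating families in the sense of Hörmander). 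Once this invariance is established, the correspondence in both directions follows by combining the normal form with \cref{thm:GGinCOT}, and the statement reduces to bookkeeping of quadratic forms and coordinate changes.
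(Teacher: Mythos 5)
Your proposal correctly identifies the crux --- that the germ assigned to a Lagrangian contact problem must be shown to be well-defined up to \emph{stable} right equivalence independently of the Weinstein chart and the Lagrangian fibration --- but it defers exactly that step (``I expect this to require a careful analysis\dots''), and that step is not bookkeeping: it is the paper's \cref{prop:IndofCotStructure}, whose proof needs the Golubitsky--Guillemin comparison lemma (\cref{lem:CompareCotStr}), a homotopy-method lemma to switch on the cross terms (\cref{lem:SwitchOn}), the parametric Morse lemma, and Sylvester's law of inertia. Moreover, the concrete steps you do spell out would fail at both ends. For the forward direction you invoke the ``only if'' direction of \cref{thm:GGinCOT}, but that direction is only valid for \emph{tangential} intersections (vanishing Hessians), and the paper explicitly notes it breaks down otherwise. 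The parametric Morse splitting is an analytic operation on generating functions; a contact equivalence $\Phi$ of the full problems does not induce any symplectomorphism relating the ``tangential cores,'' so there is nothing for \cref{thm:GGinCOT} to act on. The paper's actual argument (\cref{prop:CoEqvStabEqv}, first part) avoids this entirely: it uses $\Phi$ to manufacture a second cotangent bundle structure $\pi\circ\Phi^{-1}$ in which $\phi$ generates $\Lambda'$, so that $\phi$ and $\phi'$ are two generating functions of the \emph{same} problem, and then quotes the invariance proposition.

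For the reverse direction the gap is sharper. Stabilizing to $T^\ast(X\times\R^m)$ and cotangent-lifting the right equivalence of $\phi+Q_\phi$ and $\phi'+Q_{\phi'}$ gives a symplectomorphism of the \emph{higher-dimensional} manifold, but the theorem demands a symplectomorphism of $Z$ itself, and there is no general mechanism for descending a symplectomorphism from the stabilized problem back down --- your proposal never addresses this. The obstruction your route cannot absorb is the signature mismatch: the nondegenerate quadratic parts $B$ and $B'$ of the two generating functions may have different signatures (the remark following \cref{prop:IndofCotStructure} constructs exactly such examples), and no cotangent-lifted diffeomorphism, nor any fibre-preserving symplectomorphism fixing the zero-section, can change a signature. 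The paper resolves this with two ingredients absent from your plan: (i) the cancellation law \cref{thm:CanellationLaw} (Weinstein), which yields a genuine right equivalence $f=f'\circ r$ of the fully reduced parts \emph{in the same number of variables}, with no added variables to get rid of afterwards; and (ii) a change of the Lagrangian fibration itself, replacing $\lambda$ by $\lambda+\d H$ with $H(x,\xi)$ quadratic in $\xi$ and built from a matrix $D$ chosen so that $-B^{-1}+DB'D$ is invertible --- it is this change of fibration, not any diffeomorphism of the base, that converts the quadratic part $B$ into $BDB'DB$ and thereby absorbs the signature discrepancy. Without this move (or a worked-out destabilization argument replacing it), the construction of $\Phi$ does not go through.
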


In other words (notions will be made precise later):

\begin{restatable}{theorem}{CorrSingularities}
\label{thm:CorrSingularities}
There exists a 1-1 correspondence between Lagrangian contact problems modulo stably contact equivalence and smooth real-valued function germs up to stably right equivalence.
\end{restatable}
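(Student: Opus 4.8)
The plan is to establish the correspondence in two directions, showing it is well-defined and bijective. The main conceptual tool is a construction that assigns to each local Lagrangian contact problem $z \in X \cap \Lambda$ a function germ, together with the equivalences between problems on one side and between germs on the other. I will take the substantive theorem stated just before (relating stably right equivalent germs to the existence of the local symplectomorphism $\Phi$) as the key technical ingredient; \cref{thm:CorrSingularities} should then be a repackaging of that result into the language of equivalence classes.

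First I would make the assignment $\big(z \in X \cap \Lambda\big) \mapsto [f]$ explicit. The natural approach is to invoke a normal form for a pair of transverse Lagrangian submanifolds: by a Darboux-type/Weinstein neighbourhood theorem one can choose a symplectomorphism identifying a neighbourhood of $z$ in $Z$ with a neighbourhood of the zero-section in a cotangent bundle $T^\ast X$ so that $X$ becomes the zero-section and $\Lambda$ becomes the graph $\d f(U)$ of an exact 1-form, i.e.\ the image of a generating function germ $f\colon (X,z)\to(\R,0)$. The isolatedness of the intersection point corresponds to $0$ being an isolated critical point of $f$, so the germ is well-defined as an object; the content is to check it is well-defined up to stably right equivalence, independently of the choices (the Weinstein chart, the fibration, and the splitting off of nondegenerate directions via the parametric Morse lemma). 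Here I expect the classical result of Golubitsky--Guillemin (\cref{thm:GGinCOT}), in its stabilized form, to control precisely the ambiguity introduced by different generating-function presentations.

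Next I would verify that the assignment descends to equivalence classes and is injective on them. This is exactly where the boxed theorem does the work: two contact problems are \emph{stably contact equivalent} precisely when there is a local symplectomorphism $\Phi$ matching $(z,X,\Lambda)$ with $(z',X',\Lambda')$, and the theorem asserts this happens iff the assigned germs $f,f'$ are stably right equivalent. Thus the induced map on classes is injective (distinct germ-classes give non-equivalent problems) and well-defined (equivalent problems give the same germ-class). For surjectivity I would argue that every smooth real-valued function germ with critical point at $0$ arises from some contact problem: given $f$, one simply takes $X$ to be the zero-section of $T^\ast U$ and $\Lambda = \d f(U)$, which is a genuine Lagrangian contact problem whose assigned germ is $[f]$ by construction; the stabilization on the germ side is matched by adding nondegenerate quadratic directions, which geometrically corresponds to enlarging the ambient symplectic manifold by a standard symplectic factor.

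The main obstacle I anticipate is the \emph{well-definedness up to the right equivalence relation} of the germ assignment, i.e.\ showing that the freedom in choosing the Weinstein/Darboux identification and the generating function does not change the stable right-equivalence class of $f$. Two generating functions for the same $\Lambda$ relative to the same fibration differ only by an additive constant, which is harmless; the real subtlety is that \emph{different} Lagrangian fibrations (or different symplectic charts) can produce genuinely different generating functions, and one must show these differ only by stabilization and right equivalence. I expect to handle this by reducing to the cotangent-bundle model and then invoking \cref{thm:GGinCOT} together with the parametric Morse lemma to absorb the nondegenerate quadratic discrepancies, so that the ambiguity lives exactly within the stable right-equivalence class. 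Once this invariance is secured, the bijection of equivalence classes follows formally, and the statement of \cref{thm:CorrSingularities} is the clean restatement of the preceding theorem.
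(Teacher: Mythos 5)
Your top-level architecture does match the paper's: the assignment is via generating functions in a Weinstein chart, injectivity on classes comes from the theorem you take as given (its precise form is \cref{prop:CoEqvStabEqv}), surjectivity comes from taking $X$ to be the zero-section and $\Lambda = \d f(U)$ in $T^\ast U$, and indeed the paper obtains \cref{thm:CorrSingularities} as an immediate repackaging once \cref{def:StableEQContact2} is in place. The genuine gap sits exactly in the step you single out as the main obstacle: well-definedness of the germ assignment up to stable right equivalence. You propose to control the ambiguity between different cotangent bundle structures by invoking \cref{thm:GGinCOT} ``in its stabilized form'' together with the parametric Morse lemma. There is no such stabilized form available to invoke: Golubitsky--Guillemin's theorem requires tangential contact (vanishing gradient \emph{and} Hessian), the paper explicitly notes that their argument fails for non-tangential intersections, and producing the stabilized statement is precisely the paper's main technical work, namely \cref{lem:CompareCotStr} (two structures relate by $\phi^\beta\circ k_{\alpha\beta} = \phi^\alpha + H(x,\nabla\phi^\alpha)$ with $H$ quadratic in the fibre variable), the homotopy-method \cref{lem:SwitchOn}, and a Sylvester's-law argument, assembled in \cref{prop:IndofCotStructure}.

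Concretely, two things block your proposed reduction. First, after splitting $\phi^\alpha \sim f^\alpha(\overline x) + \underline{x}^{\top} Q^\alpha \underline{x}$ and $\phi^\beta \sim f^\beta(\overline{x}') + \underline{x}'^{\top} Q^\beta \underline{x}'$ via \cref{lem:ParametricMorse}, comparing $f^\alpha$ with $f^\beta$ through \cref{thm:GGinCOT} would require knowing that the two \emph{reduced tangential} contact problems are themselves contact equivalent; the parametric Morse lemma is a purely analytic statement about the generating functions and yields no geometric splitting of the contact problem into a tangential factor and a transverse factor, nor a symplectomorphism between the reduced factors. That cancellation statement is the geometric counterpart of \cref{thm:CanellationLaw} and does not follow from anything you cite --- it is what has to be proven. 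Second, the quadratic parts $Q^\alpha$, $Q^\beta$ arising from different structures can have different signatures (see the remark following \cref{prop:IndofCotStructure}), so $\phi^\alpha$ and $\phi^\beta$ are in general \emph{not} right equivalent; an argument routed through \cref{thm:GGinCOT}, which is a statement about plain right equivalence, cannot absorb this discrepancy --- it is the very reason the equivalence must be relaxed to the stable one. Finally, beware of a circularity in your fallback position: in the paper's logical order, \cref{prop:CoEqvStabEqv} is proven \emph{using} the well-definedness result (\cref{cor:GenFunWellDefined}), so the theorem you take as given cannot be used to supply that well-definedness.
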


\begin{example}\label{ex:IntersectionExample}
Consider the symplectic manifold $Z=(\R^4, \Omega = \d x_1 \wedge \d \xi_1 + \d x_2 \wedge \d \xi_2)$. The Lagrangian submanifolds
\[
X = \{ (x_1,x_2,\xi_1,\xi_2) \, | \, x_1=0, x_2=0\}, \quad \Lambda = \{ (x_1,x_2,\xi_1,\xi_2) \, | \, \xi_1=x_1^2, \xi_2=x_2\}
\]
have an isolated intersection point $z=(0,0,0,0)$ (see \cref{fig:IntersectionExample}).
\begin{figure}
	\includegraphics[width=0.5\textwidth]{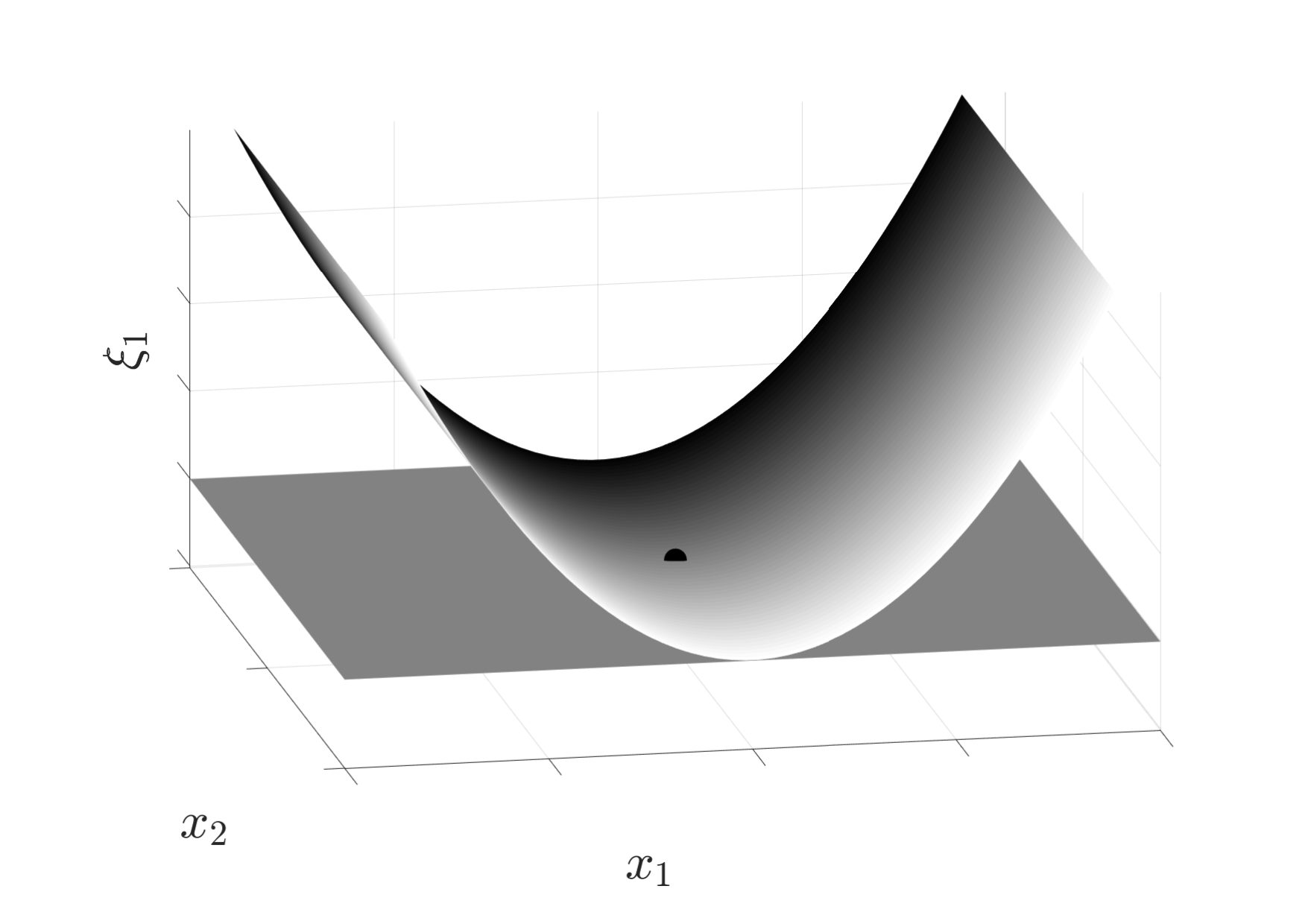}
	\caption{Illustration of the Lagrangian intersection considered in \cref{ex:IntersectionExample}. Values of the dimension $\xi_2$ relate to the shading of the displayed surfaces. The intersection point is isolated and denoted as a black dot.}\label{fig:IntersectionExample}
\end{figure}

We associate a function $f$ whose germ at zero characterises the type of intersection up to symplectomorphisms as follows:
let $T^\ast X$ denote the cotangent bundle over $X$ with the canonical symplectic structure and Darboux coordinates $q^1,q^2,p_1,p_2$. The map $\Psi \colon Z \to T^\ast X$ with coordinate expression $(q^1,q^2,p_1,p_2) = \Psi(x_1,x_2,\xi_1,\xi_2) = (x_1,x_2,\xi_1,\xi_2)$ is a symplectomorphism such that $\Psi(X)$ is the zero section of $T^\ast X$ and $\Psi(\Lambda) = \{ (q^1,q^2,p_1,p_2) \, | \, p_1=(q^1)^2, p_2=q_2\}$ is the graph of the section $\d f \colon X \to T^\ast X$ for $f \colon X \to \R$ with $f(q^1,q^2) = \frac{1}{3} (q^1)^3+\frac{1}{2} (q^2)^2$.

The germ at $(q^1,q^2) =(0,0)$ of the function $f$ characterises the type of intersection of $X$ and $\Lambda$ in $z$. However, $f$ depends on the identification of $Z$ with $T^\ast X$, i.e.\ on the choice of a cotangent bundle structure: consider the symplectomorphism $\Psi' \colon Z \to T^\ast X$ with coordinate expression $(q^1,q^2,p_1,p_2) = \Psi'(x_1,x_2,\xi_1,\xi_2) = (x_1,x_2-2 \xi_2,\xi_1,\xi_2)$. Again, $\Psi'(X)$ is the zero section of $T^\ast X$. However, $\Psi'(\Lambda) = \{ (q^1,q^2,p_1,p_2) \, | \, p_1=(q^1)^2, p_2=-q_2\}$ is the graph of the section $\d f' \colon X \to T^\ast X$ for $f' \colon X \to \R$ with $f'(q^1,q^2) = \frac{1}{3} (q^1)^3-\frac{1}{2} (q^2)^2$.

The germs of $f$ and $f'$ cannot be right equivalent since their quadratic parts have different signatures. However, they are stably right equivalent\footnote{See footnote \ref{footnote:stablyrequiv}.} and, as we will show, the stably right-equivalence class of the germ of $f$ characterises this type of intersection exactly up to symplectomorphisms.
\end{example}

The example illustrates the challenge of ill-defined quadratic parts of function germs which characterise the intersection. The problem is {\em not} visible in the case of tangential Lagrangian intersections in a symplectic manifold or when the ambient symplectic manifold is a cotangent bundle already such that no identification needs to be chosen.

\Cref{thm:CorrSingularitiesDetail,thm:CorrSingularities} overcome the following issues which occur when trying to reverse \cref{thm:GGinCOT}.

\begin{itemize}

\item
The Lagrangian manifold $\d f(U)$ from \cref{thm:GGinCOT} intersects the zero-section of $U \subset T^\ast U$ tangentially at 0. Golubitsky and Guillemin's proof of the {\em only if} direction fails when the intersection of $U$ with $\d f(U)$ is not tangential. Indeed, the equivalence relation {\em right equivalence} considered in \cite{golubitsky1975contact} needs to be relaxed to {\em stably right equivalence} when non-tangential intersections are allowed.

\item
Let $X,\Lambda$ be Lagrangian submanifolds of a symplectic manifold $Z$ such that $\Lambda$ intersects $X$ in an isolated point $z$. After shrinking all manifolds around $z$, if necessary, there are many ways of mapping $Z$ symplectically to a neighbourhood of the zero-section $X \subset T^\ast X$. We will refer to a particular choice of such an identification as a {\em choice of a cotangent bundle structure}.
\begin{itemize}
\item
In the tangential case, $\Lambda$ will be transversal to the fibres of any cotangent bundle structure. This is not true anymore in the non-tangential case but we will show that suitable cotangent bundle structures always exists such that $\Lambda$ is the image of a section $\d f$ in the bundle $T^\ast X \to X$.

\item
The function $f$ is not defined independently of the auxiliary cotangent bundle structure but we will show that its stably right equivalence class is well-defined.
\end{itemize}

\end{itemize}

The results may be compared with the correspondence of embeddings of a Lagrangian submanifold into a cotangent bundle with catastrophe theory \cite{WeinsteinBates} or other classifications such as {\em Lagrangian singularities} in \cite{Arnold2012}, where the bundle structure is fixed. There, singularities occur because Lagrangian submanifolds fail to be projectable and intersect non-transversally with fibres.
In contrast, in this paper cotangent bundle structures are just of an auxiliary nature. 

Furthermore, we provide a parameter-dependent version of the results and relate families of intersecting Lagrangian manifolds to unfoldings of singular function germs.

\begin{restatable}{theorem}{CorBifur}
\label{thm:CorBifur}
There exists a 1-1 correspondence between parameter-dependent Lagrangian contact problems up to stably right equivalence and unfoldings of smooth, real-valued function germs up to stably right equivalence as unfoldings.
\end{restatable}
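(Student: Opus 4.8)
\emph{Setup and the two maps.}
The plan is to upgrade the construction underlying \cref{thm:CorrSingularities} to smoothly varying families, rather than to apply that theorem fibrewise: a fibrewise application would only detect parameter-wise equivalence, whereas an unfolding carries a single coherent, fibred equivalence across all parameter values. I would therefore define two maps --- one sending a parameter-dependent contact problem to an unfolding, one sending an unfolding to a parameter-dependent contact problem --- and show they descend to mutually inverse bijections on equivalence classes, with the central fibre $t=0$ reproducing exactly the bijection of \cref{thm:CorrSingularities}. Throughout, a parameter-dependent problem is a smooth family $(X_t,\Lambda_t)$ in a symplectic manifold $Z$ with an isolated intersection at the central value; after a smooth family of symplectomorphisms (Weinstein's Lagrangian neighbourhood theorem applied in families) I may assume $X_t\equiv X$ is fixed and realised as the zero-section of a single cotangent bundle structure on a neighbourhood of the central intersection point $z$.

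\emph{Forward map.}
In this fixed cotangent bundle structure the central member $\Lambda_0$ is the image of a section $\d_x f$. Since projectability is an open condition, after shrinking the parameter and spatial neighbourhoods the whole family $\Lambda_t$ remains a graph $\d_x F(\cdot,t)(U)$ in the \emph{same} structure. A family of Lagrangian graphs determines its generating function only up to an additive function of $t$, which I either normalise away (e.g.\ by $F(0,t)=0$) or later identify with the corresponding freedom in unfolding equivalence. This yields a smooth germ $F\colon(\R^k\times\R^r,0)\to(\R,0)$ with $F(\cdot,0)=f$, i.e.\ an unfolding of the germ that \cref{thm:CorrSingularities} assigns to the central problem. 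Conversely, an unfolding $F$ produces the family of graphs $\Lambda_t=\d_x F(\cdot,t)(U)$ over the zero-section $X=U$, a parameter-dependent contact problem in $T^\ast U$; this is the candidate inverse map.

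\emph{Well-definedness and stabilisation.}
That the forward map is independent of the adapted cotangent bundle structure (up to equivalence of unfoldings) is the family analogue of the well-definedness step of \cref{thm:CorrSingularities}: two adapted structures differ by a family of zero-section-preserving symplectomorphisms, and I would show that such a family acts on the associated generating functions precisely by a fibred right equivalence $(x,t)\mapsto(\phi(x,t),\psi(t))$ together with an additive function of $t$ --- exactly an equivalence of unfoldings. To reconcile the possibly different numbers of state variables I would invoke a parametric splitting (Morse) lemma to write $F$ as $\hat F(x',t)+Q(x'')$ with $Q$ a $t$-independent nondegenerate quadratic form, so the stabilisation built into ``stably right equivalence as unfoldings'' matches the freedom to enlarge the cotangent fibre on the geometric side, and the essential unfolding $\hat F$ has the minimal number of state variables determined by the central germ.

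\emph{Matching the equivalences and the main obstacle.}
The heart of the argument is a two-way correspondence of the equivalence relations. In one direction, a family of symplectomorphisms realising the parameter-dependent contact equivalence --- preserving the zero-sections, the families of Lagrangians and the central point, and intertwining the parameter reparametrisation --- induces, through the generating-function calculus, a fibred diffeomorphism and an additive shift, hence an equivalence of the associated unfoldings; this is the parametric reading of the machinery behind \cref{thm:CorrSingularities}. The harder converse is a lifting problem: given an equivalence of unfoldings $G(y,v)=F(\phi(y,v),\psi(v))$ (up to an additive function of $v$), I must construct an honest smooth family of symplectomorphisms implementing it. I would build this from the cotangent lift of the base diffeomorphism $\phi(\cdot,t)$ corrected by the fibre translation along the exact $1$-form $\d_x\bigl(G-F\circ\phi\bigr)$ that records the change of generating function, and then verify that the lift is smooth in $t$, compatible with $\psi$, and extends across the stabilising quadratic directions. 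Controlling this lift uniformly in the parameter --- ensuring it remains a symplectomorphism on a common neighbourhood of the whole central fibre and that the additive term is absorbed without breaking the triangular parameter structure --- is the principal technical obstacle; the reduction to $\hat F+Q$ is what keeps the bookkeeping finite-dimensional and lets the already-established central case carry the load.
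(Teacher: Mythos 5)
There is a genuine gap, and it sits exactly at the point your proposal labels as the ``principal technical obstacle'': your toolkit for realising equivalences geometrically --- cotangent lifts of (families of) base diffeomorphisms, corrected by fibre translations along exact $1$-forms --- is structurally incapable of realising stable equivalences. Any fibre translation $\xi \mapsto \xi + \d h|_{\pi(\xi)}$ preserves the zero-section only if $\d h \equiv 0$, so the zero-section--preserving maps you can build are just cotangent lifts, and these act linearly on fibres; in particular they preserve the signature of the Hessian of the generating function at the intersection point. But ``stably right equivalent as unfoldings'' permits precisely a change of that signature: the families $f_\mu(\overline x) + \underline x^{\top} B \underline x$ and $f_\mu(\overline x) + \underline x^{\top} B' \underline x$, with $B$ and $B'$ nondegenerate of different signatures, have the same Morse-reduced form, yet are related by \emph{no} fibred diffeomorphism $(x,t)\mapsto(\phi(x,t),\psi(t))$ plus additive term (already $x^2$ versus $-x^2$). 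So in the converse (lifting) direction your construction cannot produce the required family of symplectomorphisms in exactly the cases that distinguish this theorem from the tangential Golubitsky--Guillemin statement. The paper's proof of \cref{thm:EQParamFamilies} closes this with an ingredient missing from your proposal: besides cotangent lifts (your steps appear there as the first three steps), it \emph{changes the cotangent bundle structure} by $\lambda \mapsto \lambda + \d H$ with $H$ quadratic in the fibre variable, chosen via an auxiliary invertible matrix $D$ so that the same Lagrangian family becomes a graph with quadratic part of the other signature; the resulting identification is a symplectomorphism that is not fibre-affine, and proving that it does the right thing requires the homotopy-method lemma (\cref{lem:SwitchOnParameter}).

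The same omission infects your well-definedness step. Two adapted cotangent bundle structures do differ by a family of zero-section--preserving symplectomorphisms, but such a map does \emph{not} act on generating functions by a fibred right equivalence plus a function of $t$: the actual relation is the gauge formula $\phi^\beta_\mu \circ k^\mu_{\alpha\beta} = \phi^\alpha_\mu + H_\mu(x,\nabla \phi^\alpha_\mu) + \chi(\mu)$ of \cref{lem:CompareCotStr}, with $H_\mu$ quadratic in the fibre variable. Converting this into an equivalence of unfoldings is the content of \cref{prop:IndofCotStructureParameter}, whose proof again runs through \cref{lem:SwitchOnParameter}, and the conclusion is only a \emph{stable} equivalence --- the remark following \cref{prop:IndofCotStructure} exhibits two structures for the same problem whose generating functions have quadratic parts of different signatures, so the fibred right equivalence you assert does not exist in general. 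Your overall architecture (reduce to constant $X$ as zero-section, generating families, two directions, parametric splitting lemma) matches the paper's, but without the switch-on lemma and the $\lambda + \d H$ trick both the well-definedness of the forward map and the surjectivity of the lifting step fail precisely on the ``stable'' part of the equivalence relation.
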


\begin{figure}
	
	\includegraphics[width=0.3\textwidth]{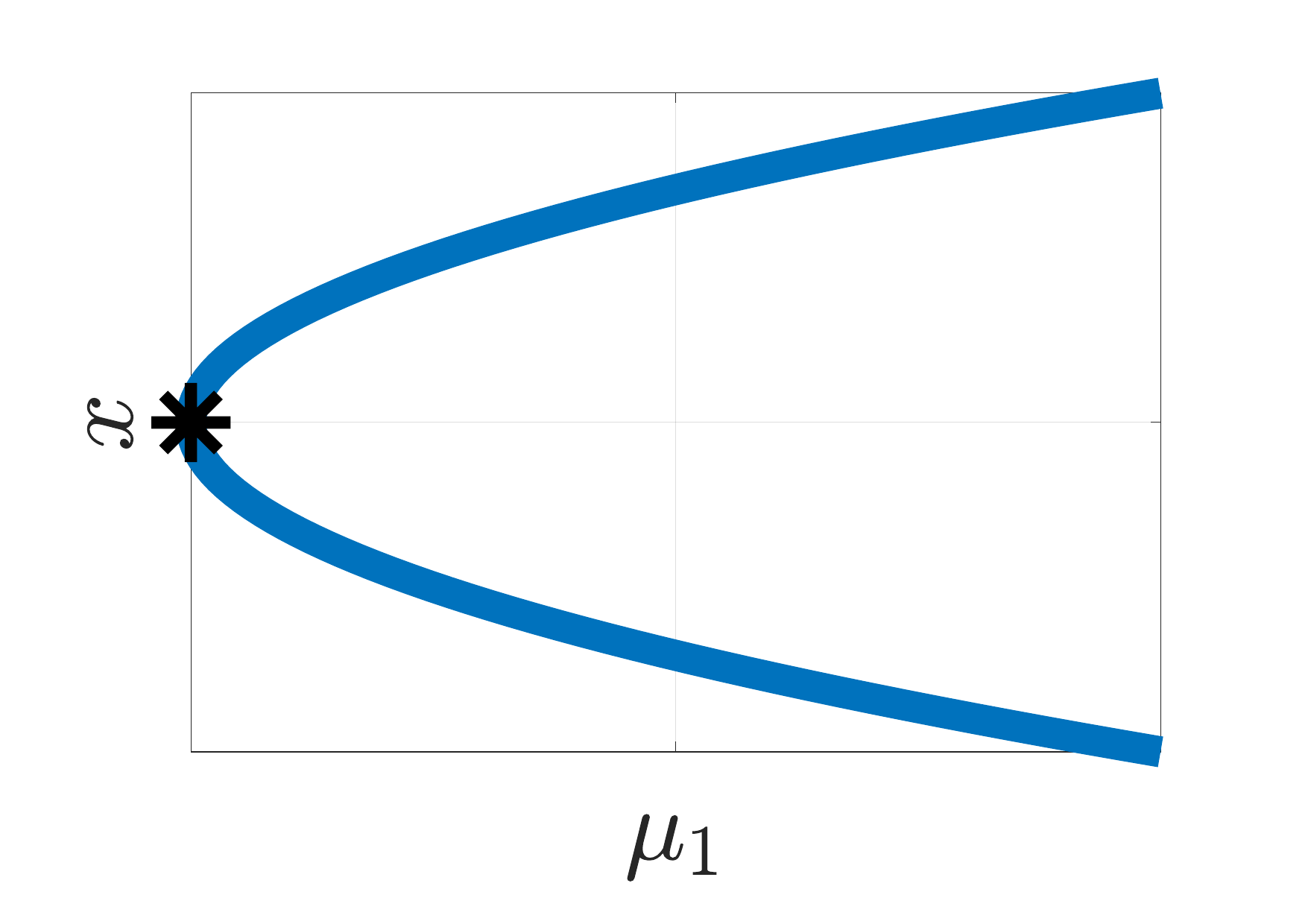}
	\includegraphics[width=0.3\textwidth]{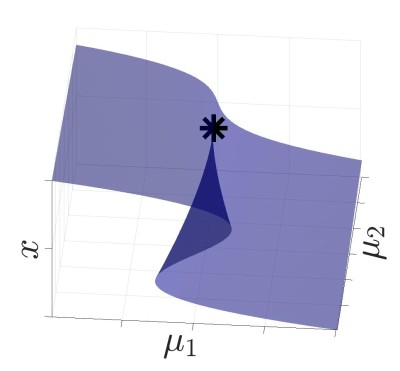}
	\includegraphics[width=0.3\textwidth]{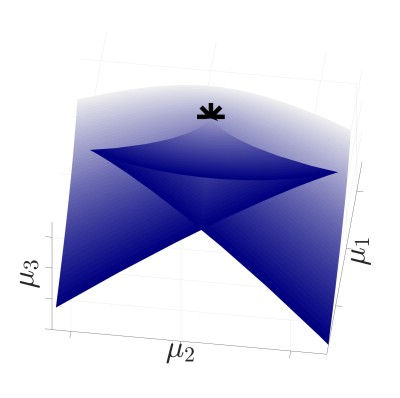}
	\includegraphics[width=0.3\textwidth]{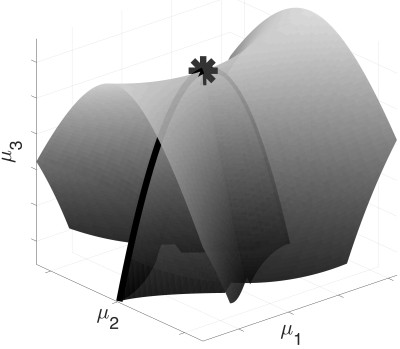}
	\includegraphics[width=0.3\textwidth]{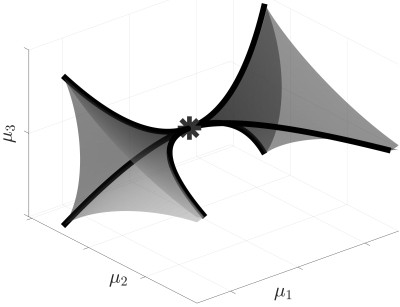}
	
	\caption{Illustrations of the normal forms of versal unfoldings of all singularities (catastrophes) whose versal unfoldings need no more than three parameters. From left to right these are fold ($A_2$), cusp ($A_3$), and swallowtail ($A_4$) in the first row and hyperbolic umbilic $(D_4^+)$ and elliptic umbilic $(D_4^-)$ in the second row. For $A_2$ and $A_3$ we plot the critical set of their normal forms $x \mapsto x^3 + \mu_1 x$ and $x \mapsto x^4 + \mu_2 x^2 + \mu_1 x$ over the parameter space. For the remaining catastrophes we plot a projection of the degenerate critical points of their normal forms $x \mapsto x^5 + \mu_3 x^3 + \mu_2 x^2 + \mu_1 x$ and $(x,y)\mapsto x^3 \pm xy^2 + \mu_3(x^2 \mp y^2)+\mu_2y+\mu_1x$ to the parameter space \cite{Arnold1992}. For animations see \cite{YoutubeSingularitiesAnimations}. }\label{fig:catastrophies}
	
\end{figure}

This allows transporting the highly-developed notions and algebraic framework of catastrophe theory to Lagrangian contact problems and bifurcations of Lagrangian intersection problems. As a corollary, classification results for singularities apply to Lagrangian contact problems. \Cref{fig:catastrophies} shows illustrations of the normal forms of versal unfoldings of all singularities whose versal unfoldings need at most three parameters. The type of intersection in \cref{ex:IntersectionExample} corresponds to the fold catastrophe $(A_2)$.



The remainder of the paper is structured as follows. In \cref{sec:LCPgerms} we first review some of Golubitsky and Guillemin's results and then prove that not necessarily tangential Lagrangian contact problems in arbitrary symplectic manifolds up to contact equivalence correspond to function germs up to stably right equivalence.
In \cref{sec:ParamLCP} we extend the identification results of \cref{sec:LCPgerms} to families of Lagrangian contact problems. In \cref{sec:Conclusion} we conclude the \cref{thm:CorrSingularities,thm:CorBifur} and show an application to boundary value problems for symplectic maps.


\section{Lagrangian contact problems and catastrophe theory}\label{sec:LCPgerms}

Let us introduce the notion of {\em Lagrangian contact problems} and review some definitions based on \cite{golubitsky1975contact,MatherIII}.

\begin{definition}[Lagrangian contact problem]\label{def:LagContProb}
Let $X$, $\Lambda$ be two Lagrangian submanifolds of a symplectic manifold $Z$ intersecting in an isolated point $z \in \Lambda \cap X$.
Then $(X,\Lambda,z)$ is called a {\em Lagrangian contact problem (in $Z$)}.
We say {\em $\Lambda$ has contact with $X$ in $z$}.
In the special case where $X$ and $\Lambda$ are tangential at $z$ the problem $(X,\Lambda,z)$ is called a {\em tangential Lagrangian contact problem}.
\end{definition}

\begin{definition}[contact equivalence of Lagrangian contact problems]\label{def:ContEQ}
Let $(X,\Lambda,z)$ and $(X',\Lambda',z')$ be two Lagrangian contact problems in $Z$ and $Z'$, respectively. We say that $(X,\Lambda,z)$ and $(X',\Lambda',z')$ are {\em contact equivalent} or $\Lambda$ {\em has the same contact} with $X$ at $z$ as $\Lambda'$ has contact with $X'$ at $z'$ if there exist open neighbourhoods $U \subset Z$ of $z$ and $U' \subset Z'$ of $z'$ and a symplectic diffeomorphism $\Phi\colon U \to U'$ such that
\[\Phi(z) = z', \quad \Phi(X\cap U) = X'\cap U', \quad \Phi(\Lambda \cap U)=\Lambda' \cap U'.\qedhere\]
\end{definition}

\begin{definition}[stably right equivalence of function germs]\label{def:stablyrequiv}
Two germs of smooth functions $f\colon (\R^k,0) \to (\R,0), \, x \mapsto f(x)$ and $g\colon (\R^l,0) \to (\R,0),\, y \mapsto g(y)$ are {\em stably right equivalent} if there exist nondegenerate quadratic forms $Q_F(u)$ and $Q_G(v)$ such that the germs at $0$ of $F(x,u) \coloneqq f(x)+Q_F(u)$ and $G(y,v) \coloneqq g(y)+Q_G(v)$ are right equivalent.
\end{definition}

{\em Stably right equivalence} constitutes an equivalence relation on the set of germs of smooth functions $f \colon (\R^k,0) \to (\R,0)$, $k \in \N_0$. Each stably right equivalence class $[\mathbf{g}]$ has a representative $\mathbf{g}$, which is the germ of a smooth function $g \colon (\R^{k_0},0) \to (\R,0)$ such that $k_0$ is minimal. The germ $\mathbf{g}$ and the function $g$ are called {\em fully reduced}. 

\begin{theorem}\label{thm:CanellationLaw}
Two germs of fully reduced smooth functions $f\colon (\R^k,0) \to (\R,0)$ and $g\colon (\R^k,0) \to (\R,0)$ are {\em stably right equivalent} if and only if the germs of $f$ and $g$ are right equivalent.
\end{theorem}

\begin{proof}
The statement follows from \cite{Weinstein1971}.
\end{proof}

\begin{remark}[Warning]
Contact equivalence for Lagrangian contact problems is not to be confused with Mather's notion of contact equivalence for function germs which is related to the contact of a smooth manifold with a zero section of a smooth bundle (i.e.\ without symplectic structure) \cite{MatherIII}. 
\end{remark}

\begin{definition}[cotangent bundle structure and projection]\label{def:cobundlestructure}
Let $X$ be a Lagrangian submanifold of a symplectic manifold $Z$. A symplectomorphism $\Phi$ from an open neighbourhood $U\subset Z$ with $X \subset U$ to a tubular neighbourhood of the zero section of $\pi \colon T^\ast X\to X$ is called a {\em cotangent bundle structure for $Z$ over $X$.} It is required that $\pi\circ \Phi \circ \iota$ is the identity map on $X$, where $\iota \colon X \hookrightarrow U$ is the natural inclusion.
The map $\pi \circ \Phi$ is called the {\em cotangent bundle projection with respect to the structure $\Phi$.}
\end{definition}

\begin{remark}
The existence of cotangent bundle structures (for a sufficiently small neighbourhoods $U$ of $X$) is guaranteed by Weinstein's generalisation of Darboux's theorem, see, for instance, \cite[Thm.\ 15.3]{Libermann1987}.
Notice that, in comparison to the existence of local Darboux coordinates, Weinstein's generalisation has a global character since $X$ does not need to be shrunk for cotangent bundle structures to exist. As all considerations in this paper are local, the globality is only exploited for notational convenience in this context. Moreover, we may refer to a function germ and a representative with the same symbol where a differentiation is not essential.
\end{remark}

\begin{remark}\label{rem:cobundleStructure1Form}
Cotangent bundle structures for a symplectic manifold $Z$ over a Lagrangian submanifold $X$ correspond to completely integrable Lagrangian subbundles of the tangent bundle $TZ$ transverse to $X$. (See \cite{Libermann1987} for details.) Moreover, cotangent bundle structures are in 1 to 1 correspondence with 1-forms defined on neighbourhoods of $X$ whose zero-set is $X$ and whose exterior derivative is the symplectic structure. 
The cotangent bundle structure $\Phi \colon U \to V$, where $U$ is an open neighbourhood of $X$ and $V$ is an open neighbourhood of the zero section of $T^\ast X$, associated to such a 1-form $\alpha$ is such that $\Phi^\ast \lambda = \alpha$ \cite{golubitsky1975contact,Libermann1987}, where $\lambda$ is the canonical/tautological 1-form of $T^\ast X$.
\end{remark}

For reference, let us recall some of Golubitsky and Guillemin's results and provide proofs for the cases where we would like to use the statement in a more general setting than in the original paper.

\begin{lemma}[{\cite[Lemma 3.1]{golubitsky1975contact}}]\label{lem:CompareCotStr}
Let $X$ be a Lagrangian submanifold of a symplectic manifold $Z$.
Let $x_1,\ldots,x_n$ be local coordinates on $X$ centred at $z$.
Consider two cotangent bundle structures $\Phi^\alpha$ and $\Phi^\beta$ on $Z$ over $X$ around $z$.
Let $\xi_1,\ldots,\xi_n$ be the conjugate momenta to $x_1,\ldots,x_n$ with respect to the first structure. Let $\lambda$ denote the canonical 1-form on $T^\ast X$, $\alpha \coloneqq (\Phi^\alpha)^\ast \lambda$, $\beta \coloneqq (\Phi^\beta)^\ast \lambda$.
The closed 1-form $\beta-\alpha$ can locally be written as $\d H$ with 
\begin{equation}\label{eq:ExprHinLemma}
H(x,\xi) = \sum_{i,j=1}^n h_{ij}(x,\xi) \xi_i \xi_j.
\end{equation}
If the Lagrangian submanifold $\Lambda \subset Z$ is the image of a section $\d \phi^\alpha$ with respect to the $\Phi^\alpha$-structure as well as the image of a section $\d \phi^\beta$ with respect to the $\Phi^\beta$-structure then
\begin{equation}\label{eq:RelPhiaPhib}
(\phi^\beta \circ k_{\alpha \beta})(x) = \phi^\alpha(x) + H(x,\nabla \phi^\alpha(x)) + \mathrm{const.},
\end{equation}
where $k_{\alpha\beta}$ is the diffeomorphism $k_{\alpha\beta} = \pi^\beta \circ (\pi^\alpha|_\Lambda)^{-1}$. Here $\pi^\alpha$ and $\pi^\beta$ are the projections corresponding to $\Phi^\alpha$ and $\Phi^\beta$, respectively.
\end{lemma}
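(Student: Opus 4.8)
The plan is to use the $\Phi^\alpha$-structure as a set of Darboux coordinates $(x,\xi)$ on a neighbourhood $U$ of $z$, so that $\alpha$ becomes the canonical $1$-form $\lambda=\sum_i\xi_i\,\d x_i$, the submanifold $X$ becomes $\{\xi=0\}$, and $\Lambda$ becomes the graph $\{(x,\nabla\phi^\alpha(x))\}$. Since $\alpha$ and $\beta$ are both primitives of the symplectic form ($\d\alpha=\omega=\d\beta$ by \cref{rem:cobundleStructure1Form}), the difference $\alpha-\beta$ is closed, and Poincar\'e's lemma produces an $H$ with $\alpha-\beta=\d H$ on a contractible neighbourhood.

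First I would pin down the special form \eqref{eq:ExprHinLemma}. The essential observation is that the canonical $1$-form $\lambda$ vanishes as a covector at every point of the zero section, not merely when restricted to it. Consequently $\alpha=(\Phi^\alpha)^\ast\lambda$ and $\beta=(\Phi^\beta)^\ast\lambda$ both vanish as covectors at every point of $X$, and hence so does $\alpha-\beta$. Writing $\alpha-\beta=\sum_i P_i\,\d x_i+\sum_i Q_i\,\d\xi_i$, this forces $P_i(x,0)=0$ and $Q_i(x,0)=0$ for all $i$; equivalently $H(x,0)=\mathrm{const.}$ and $\partial H/\partial\xi_i(x,0)=0$. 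A parametrised Taylor/Hadamard expansion in $\xi$ with integral remainder then yields $H(x,\xi)=\sum_{i,j}h_{ij}(x,\xi)\xi_i\xi_j$ for smooth $h_{ij}$, which is exactly \eqref{eq:ExprHinLemma}.

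For \eqref{eq:RelPhiaPhib} I would invoke the generating-function identity: pulling the canonical $1$-form back along a section-type Lagrangian returns the differential of its generating function. Denoting by $\iota\colon\Lambda\hookrightarrow U$ the inclusion, one checks $\iota^\ast\alpha=\d(\phi^\alpha\circ\pi^\alpha|_\Lambda)$ and $\iota^\ast\beta=\d(\phi^\beta\circ\pi^\beta|_\Lambda)$ on $\Lambda$ (up to the sign fixed by the canonical-form convention), since $(\pi^\alpha|_\Lambda)^{-1}$ is the section $x\mapsto(x,\nabla\phi^\alpha(x))$ and pulling back $\lambda$ along it gives $\d\phi^\alpha$. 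Subtracting and inserting $\alpha-\beta=\d H$ gives $\d\bigl(\phi^\alpha\circ\pi^\alpha|_\Lambda-\phi^\beta\circ\pi^\beta|_\Lambda\bigr)=\d(H|_\Lambda)$ on $\Lambda$, so these two functions on $\Lambda$ agree up to an additive constant.

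Finally I would precompose with the $\alpha$-section $s^\alpha=(\pi^\alpha|_\Lambda)^{-1}$. Since $\pi^\alpha|_\Lambda\circ s^\alpha=\id$ the first term becomes $\phi^\alpha$; since $\pi^\beta\circ s^\alpha=k_{\alpha\beta}$ the second becomes $\phi^\beta\circ k_{\alpha\beta}$; and $H|_\Lambda\circ s^\alpha=H(x,\nabla\phi^\alpha(x))$. This is precisely \eqref{eq:RelPhiaPhib}. The main obstacle is the first step: securing the purely quadratic form of $H$, which rests on the covector-vanishing of $\lambda$ along the entire zero section rather than the weaker statement that $\lambda$ restricts to zero there. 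The remaining bookkeeping (Poincar\'e, Hadamard, the generating-function identity, and tracking the sign and constant so that the final identity reads with $+H$ in the paper's convention) is routine.
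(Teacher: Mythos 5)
Your proposal is correct and takes essentially the same route as the paper's proof: closedness of $\alpha-\beta$ together with the vanishing of both 1-forms as covectors along $X$ yields a primitive $H$ of the quadratic form \eqref{eq:ExprHinLemma} via Taylor/Hadamard, and pulling everything back to $\Lambda$ and re-expressing the resulting primitive identity through the two projections gives \eqref{eq:RelPhiaPhib}. The sign subtlety you flag (obtaining $-H$ rather than $+H$ under the convention $\alpha-\beta=\d H$) is present in the paper's own proof as well and is immaterial, since replacing $H$ by $-H$ preserves the form \eqref{eq:ExprHinLemma}.
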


\begin{proof}
The 1-forms $\alpha$ and $\beta$ are primitives of the symplectic form on $Z$. Moreover, $\alpha|_z =0$ if and only if $z \in X$. Analogously for $\beta$. Therefore, $\beta-\alpha$ is closed and has a local primitive $H$ on an open neighbourhood of $z$. The primitive must be of the form \eqref{eq:ExprHinLemma}: in local coordinates we have
\[
\alpha = \sum_j \xi_j \d x_j, \quad \beta = \sum_j (b_j \d x_j + c_j \d \xi_j), \quad \d H = \sum_j \left(\frac{\p H}{\p x_j} \d x_j + \frac{\p H}{\p \xi_j} \d \xi_j\right).
\]
Now
\begin{equation}\label{eq:alpbetadH}
\sum_j ((\xi_j-b_j) \d x_j + c_j \d \xi_j) = \alpha-\beta = -\d H 
= -\sum_j \left(\frac{\p H}{\p x_j} \d x_j + \frac{\p H}{\p \xi_j} \d \xi_j\right).
\end{equation}
Setting $\xi = 0$ in \eqref{eq:alpbetadH} and using $c_j(x,0) \equiv 0$ (which follows from $\beta|_{(x,\xi)=(x,0)}=0$) we deduce that $\frac{\p H}{\p \xi_j}(x,0) \equiv 0$. 
By Taylor's theorem there exists a primitive of the form \eqref{eq:ExprHinLemma}.

Let $\iota \colon \Lambda \hookrightarrow Z$ denote the embedding of $\Lambda$ into $Z$. The 1-forms $\iota^\ast \alpha$ and $\iota^\ast \beta$ are closed since $\Lambda$ is Lagrangian. Locally around a point of interest $\iota^\ast \alpha$ and $\iota^\ast \beta$ have primitives which we denote by $\varphi^\alpha$ and $\varphi^\beta$, respectively. 
Due to $\iota^\ast \beta-\iota^\ast \alpha=\d (H\circ \iota)$ we have
\begin{equation}\label{eq:RelonLambda}
\varphi^\beta = \varphi^\alpha + H \circ \iota+ \mathrm{const.}
\end{equation}
on $\Lambda$. Moreover, $\phi^\alpha = \varphi^\alpha \circ (\pi^\alpha|_\Lambda)^{-1}+ \mathrm{const}$ and $\phi^\beta = \varphi^\beta \circ (\pi^\beta|_\Lambda)^{-1}+ \mathrm{const}$. 
Expressing relation \eqref{eq:RelonLambda} in the canonical coordinates $(x,\xi)$ of the $\alpha$-cotangent bundle structure yields \eqref{eq:RelPhiaPhib}.
\end{proof}

\begin{remark}
If the manifolds $X$ and $\Lambda$ intersect nontrivially and $x_1,\ldots,x_n$ are centred coordinates at an intersection point of $\Lambda$ and $X$ then the constant in \eqref{eq:RelPhiaPhib} vanishes if we choose $\phi^\alpha(0)=\phi^\beta(0)$. However, in view of the parameter-dependent setting in \cref{sec:ParamLCP} it is convenient {\em not} to restrict to this case for the formulation of \cref{lem:CompareCotStr}.
\end{remark}

The following lemma corresponds to \cite[Prop.4.2]{golubitsky1975contact} which is a result by Tougeron \cite[p.209]{Tougeron1968} and is also contained in \cite[Appendix]{Weinstein1972}.

\begin{lemma}\label{lem:HFormula}
Let
\[
H(x,\xi) = \sum_{i,j =1}^n h_{ij}(x,\xi) \xi_i \xi_j
\]
be defined on an open neighbourhood of the origin in $\R^n \times \R^n$. Consider a real valued map $\phi$ defined on an open neighbourhood of 0 in $\R^n$ such that $\phi(0)=0$, $\nabla \phi(0)=0$, $\mathrm{Hess}\, \phi (0) = 0$.
The map
\begin{equation}\label{eq:cotForumla}
\psi (x) = \phi(x)+H(x,\nabla \phi(x))
\end{equation}
is right equivalent to $\phi$ on a neighbourhood of the origin in $\R^n$ and the right equivalence fixes the origin.
\end{lemma}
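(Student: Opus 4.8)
The plan is to establish right equivalence by the homotopy (Moser / Thom--Levine) method, interpolating between $\phi$ and $\psi$ along the path
\[
\psi_t(x) = \phi(x) + t\, H(x,\nabla\phi(x)), \qquad t \in [0,1],
\]
so that $\psi_0 = \phi$ and $\psi_1 = \psi$. I would seek a family of diffeomorphism germs $h_t$ fixing the origin, with $h_0 = \id$, satisfying $\psi_t \circ h_t = \phi$ for all $t$; evaluating at $t=1$ then yields $\phi = \psi \circ h_1$, i.e.\ right equivalence with a diffeomorphism fixing $0$. Generating $h_t$ by a time-dependent vector field $X_t$ via $\frac{\d}{\d t} h_t = X_t \circ h_t$ and differentiating $\psi_t \circ h_t = \phi$ in $t$ reduces the problem, after composing with $h_t^{-1}$, to the \emph{homological equation}
\[
\langle \nabla \psi_t(x), X_t(x)\rangle = -\frac{\p}{\p t}\psi_t(x) = -H(x,\nabla\phi(x)).
\]

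The key algebraic observation is that the right-hand side lies in the gradient ideal of $\phi$ with coefficients vanishing at the origin. Since $H$ is quadratic in $\xi$,
\[
H(x,\nabla\phi(x)) = \sum_{i} a_i(x)\, \p_i\phi(x), \qquad a_i(x) = \sum_j h_{ij}(x,\nabla\phi(x))\,\p_j\phi(x),
\]
and the hypotheses $\nabla\phi(0)=0$, $\Hess \phi(0)=0$ force each $\p_j\phi$, and hence each $a_i$, to vanish to second order at $0$. The same hypotheses make the correction to the gradient tame: differentiating $\psi_t - \phi$ by the product rule, every term retains an undifferentiated factor $\p_\ell\phi$, so that $\nabla\psi_t = (\Id + tM(x))\nabla\phi$ for a matrix $M$ of smooth functions with $M(0)=0$. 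Here $\nabla\phi(0)=0$ kills the terms in which $h_{ij}$ is differentiated, while $\Hess\phi(0)=0$ kills the terms carrying a second derivative of $\phi$. Consequently $\Id + tM(x)$ is invertible for $x$ near $0$ and all $t\in[0,1]$, the gradient ideals of $\psi_t$ and $\phi$ coincide, and $\nabla\phi = (\Id + tM)^{-1}\nabla\psi_t$.

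Substituting this inversion into the expression for $H(x,\nabla\phi(x))$ rewrites the right-hand side of the homological equation as $\sum_k b_k(x)\,\p_k\psi_t(x)$, where $b_k$ is a smooth (in $x$ and $t$) combination of the $a_i$ and therefore vanishes at the origin. Setting $X_t \coloneqq -(b_1,\ldots,b_n)$ solves the homological equation with $X_t(0)=0$. Because $X_t$ vanishes at the origin, the origin is a fixed point of its flow, and after shrinking the neighbourhood the flow $h_t$ exists for all $t\in[0,1]$, is a diffeomorphism germ fixing $0$, and satisfies $\psi_t\circ h_t = \phi$; at $t=1$ this is the desired equivalence.

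I expect the main obstacle to be the bookkeeping behind the factorisation $\nabla\psi_t = (\Id + tM)\nabla\phi$ with $M(0)=0$: one must verify carefully that \emph{both} $\nabla\phi(0)=0$ and $\Hess\phi(0)=0$ are exactly what guarantee the vanishing of $M$ at the origin, and thereby the invertibility of $\Id + tM$, the equality of the two gradient ideals, and the solvability of the homological equation by a vector field fixing $0$. Without the Hessian hypothesis the correction matrix need not vanish at $0$, and the homotopy argument breaks down.
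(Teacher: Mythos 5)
Your proposal is correct and is essentially the paper's own proof: the same homotopy $\psi_t=\phi+t\,H(x,\nabla\phi)$, the same factorisation $\nabla\psi_t=(\Id+tM)\nabla\phi$ with $M(0)=0$ (this is the paper's matrix $B(t,x)$, invertible near the origin precisely because $\nabla\phi(0)=0$ and $\Hess\phi(0)=0$), and the same vector field $-(\Id+tM)^{-T}\mathcal H\,\nabla\phi$ solving the homological equation, integrated to a flow that fixes the origin and yields $\psi\circ h_1=\phi$ at $t=1$. No gaps to report.
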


\begin{proof}
To simplify notation, we set
\[
\overline H(x)= H(x,\nabla \phi(x)), \quad \overline h_{ij}(x) = h_{ij}(x,\nabla \phi(x)).
\]
We prove the assertion using the homotopy method: define
\begin{equation}\label{eq:DefPsi}
\psi_t(x) = \phi(x)+t \overline H(x).
\end{equation}
We seek a family of local diffeomorphisms $f_t$ fixing 0 such that
\begin{equation}\label{eq:HomotopyLemma}
\psi_t \circ f_t = \phi.
\end{equation}

Differentiating \eqref{eq:HomotopyLemma} with respect to $t$ we find
\[
\frac{\d}{\d t} (\psi_t) \circ f_t + \left\langle \nabla \psi_t \circ f_t , \frac{\d}{\d t} f_t \right\rangle =0.
\]
Here, $\langle \cdot,\cdot \rangle$ denotes the scalar product in $\R^n$. An evaluation at $f_t^{-1}(x)$ yields
\begin{equation}\label{eq:SolveforW}
\frac{\d }{\d t} \psi_t  + \langle \nabla \psi_t , w(x,t) \rangle =0
\end{equation}
with
\begin{equation}\label{eq:Solveforft}
w(x,t)= \frac{\d}{\d t} (f_t)(f_t^{-1}(x)).
\end{equation}

The relation \eqref{eq:SolveforW} is an implicit expression for $w$ in terms of the given function $\psi_t$ and its derivatives. We will show that \eqref{eq:SolveforW} can be solved for $w$ such that $w(x,t)$ is defined around $x=0$ and $w(0,t)=0$. Then there exists an open neighbourhood $U\subset \R^n$ of $0$ such that the initial value problem
\begin{equation}
\frac{\d}{\d t} f_t (x) = w(f_t(x),t), \quad f_0(x)=x
\end{equation}
can be solved for all $x\in U$ on the interval $t \in [0,1]$. The obtained family of functions $f_t$ fulfils $\frac{\d}{\d t} (\psi_t \circ f_t) = 0$ with $f_0=\id$ and, therefore, \eqref{eq:HomotopyLemma}. Moreover, $f_t(0)\equiv 0$ so that $f_1$ is the required right equivalence.

We now show that for $x$ near $0$ expression \eqref{eq:SolveforW} is solvable for $w$ such that $w(0,t)=0$. Differentiating \eqref{eq:DefPsi} with respect to $x$ yields
\begin{equation}\label{eq:relateNablaPsis}
\begin{split}
\frac{\p \psi_t}{\p x_l}
&= \frac{\p \phi}{\p x_l}
  + t \sum_{i,j} \left(\frac{\p \overline h_{ij}}{\p x_l}  \frac{\p \phi}{\p x_i}\frac{\p \phi}{\p x_j} + 2 \overline h_{ij} \frac{\p^2 \phi}{\p x_i \p x_l} \frac{\p \phi}{\p x_j}\right)\\
&= \sum_j \underbrace{\left( \delta_{lj}+ t \sum_{i} \left(\frac{\p \overline h_{ij}}{\p x_l}  \frac{\p \phi}{\p x_i} + 2 \overline h_{ij} \frac{\p^2 \phi}{\p x_i \p x_l}\right)\right)}_{=:B_{lj}(t,x)} \frac{\p \phi}{\p x_j}
\end{split}
\end{equation}
The maps $B_{lj}(t,x)$ form a matrix $B$ with $B(t,0)=\Id$. Therefore, there exists a neighbourhood of $x=0$ such that $B$ is invertible for all $t\in[0,1]$. We have
\begin{equation}\label{eq:RelateDpsiDpsit}
\nabla \phi = B^{-1} \nabla \psi_t.
\end{equation}
The functions $\overline h_{ij}$ constitute a matrix which we denote by $\mathcal H$. 
Differentiating \eqref{eq:DefPsi} with respect to $t$ and using \eqref{eq:RelateDpsiDpsit} we get 
\begin{equation*}
\frac{\d}{\d t} \psi_t 
= \nabla \phi^{\top} \mathcal H \nabla \phi
\stackrel{\eqref{eq:RelateDpsiDpsit}} = \nabla \psi_t^{\top} \underbrace{B^{-\top} \mathcal H \nabla \phi}_{=:-w},
\end{equation*}
where $B^{-\top}=(B^{-1})^\top$.
Now $w(0,t)=0$ and $w$ solves \eqref{eq:SolveforW}. This completes the proof.
\end{proof}

The \cref{lem:CompareCotStr,lem:HFormula} imply the following proposition.

\begin{proposition}\label{prop:TouchWellDefined}
Let $(X,\Lambda,z)$ be a tangential Lagrangian contact problem in $Z$. Consider two cotangent bundle structures over $X$ near $z$ such that $\Lambda$ is the image of the section $\d \phi^\alpha$ and $\d \phi^\beta$ with $\phi^\alpha(z)=0=\phi^\beta(z)$. Then $\phi^\alpha$ and $\phi^\beta$ are right equivalent.
\end{proposition}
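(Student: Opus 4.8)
The plan is to derive the result by feeding the output of \cref{lem:CompareCotStr} straight into \cref{lem:HFormula}. First I would fix local coordinates $x_1,\dots,x_n$ on $X$ centred at $z$, so that $z$ becomes the origin, and take $\xi_1,\dots,\xi_n$ to be the conjugate momenta for the $\Phi^\alpha$-structure. Applying \cref{lem:CompareCotStr} to the two structures yields a local primitive $H(x,\xi) = \sum_{i,j} h_{ij}(x,\xi)\xi_i\xi_j$ of $\alpha-\beta$ together with the identity
\[
(\phi^\beta \circ k_{\alpha\beta})(x) = \phi^\alpha(x) + H(x,\nabla\phi^\alpha(x)) + \mathrm{const}.
\]
I would first observe that $k_{\alpha\beta} = \pi^\beta \circ (\pi^\alpha|_\Lambda)^{-1}$ fixes the origin: since $z$ lies on the zero section of both structures, $\pi^\alpha(z)=z$ and the point of $\Lambda$ projecting to $z$ is $z$ itself, whence $k_{\alpha\beta}(z)=z$. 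Because $\phi^\alpha(0)=0=\phi^\beta(0)$ in these centred coordinates, the remark following \cref{lem:CompareCotStr} discards the additive constant, leaving $\phi^\beta \circ k_{\alpha\beta} = \psi$, where $\psi(x) = \phi^\alpha(x)+H(x,\nabla\phi^\alpha(x))$ is exactly the map treated in \cref{lem:HFormula}.

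Next I would check that $\phi^\alpha$ satisfies the three hypotheses of \cref{lem:HFormula}. The value condition $\phi^\alpha(0)=0$ is assumed. The gradient condition $\nabla\phi^\alpha(0)=0$ holds because $z \in X\cap\Lambda$ lies on the zero section, so the covector $\d\phi^\alpha$ over $z$ vanishes. The Hessian condition $\mathrm{Hess}\,\phi^\alpha(0)=0$ is precisely where the tangential hypothesis is used: the tangent space of $\Lambda=\mathrm{Im}(\d\phi^\alpha)$ at $z$ is the graph of the linear map $\mathrm{Hess}\,\phi^\alpha(0)$, while the tangent space of the zero section $X$ is the graph of the zero map, and tangential contact forces these to agree, so $\mathrm{Hess}\,\phi^\alpha(0)=0$.

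With these verified, \cref{lem:HFormula} gives that $\psi$ is right equivalent to $\phi^\alpha$ by a diffeomorphism fixing the origin. Writing $\sim$ for right equivalence, it remains to relate $\psi=\phi^\beta\circ k_{\alpha\beta}$ to $\phi^\beta$: since $k_{\alpha\beta}$ is a germ of a diffeomorphism fixing the origin, $\phi^\beta\circ k_{\alpha\beta}\sim\phi^\beta$ by the very definition of right equivalence. Chaining the two equivalences gives $\phi^\alpha \sim \psi = \phi^\beta\circ k_{\alpha\beta} \sim \phi^\beta$, which is the claim.

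I expect the main obstacle to be the geometric bookkeeping rather than any analysis: one must be careful that the coordinate origin genuinely corresponds to $z$ simultaneously in both cotangent bundle structures, that the identification of $X$ with the zero section really forces both $\nabla\phi^\alpha(0)=0$ and (under tangency) $\mathrm{Hess}\,\phi^\alpha(0)=0$, and that $k_{\alpha\beta}$ is correctly seen to fix the origin so that the constant drops and the final composition is a bona fide right equivalence. Once these points are secured, the statement is a formal consequence of \cref{lem:CompareCotStr,lem:HFormula}.
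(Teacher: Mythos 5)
Your proposal is correct and follows exactly the route the paper takes: the paper's proof consists precisely of the observation that \cref{lem:CompareCotStr,lem:HFormula} combine to give the result, and your write-up fills in the details of that combination (vanishing of the constant, $k_{\alpha\beta}$ fixing the origin, tangency forcing $\mathrm{Hess}\,\phi^\alpha(0)=0$ so that \cref{lem:HFormula} applies, and absorbing $k_{\alpha\beta}$ into the right equivalence) in the intended way.
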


\begin{remark}\label{rem:anyBundleSection}
As $X$ and $\Lambda$ are tangential at $z$, for any cotangent bundle structure the submanifold $\Lambda$ is the image of a section of the cotangent bundle locally around the point of contact. This is not true for general Lagrangian contact problems.
\end{remark}

\begin{theorem}\label{prop:TouchContactifRequiv}
Let $(X,\Lambda,z)$ and $(X,\Lambda',z)$ be two tangential Lagrangian contact problems in $Z$.
For any cotangent bundle structure over $X$ near $z$ such that $\Lambda$ is the image of the section $\d \phi$ and $\Lambda'$ the image of $\d \phi'$ with $\phi(z)=0=\phi'(z)$,
the function germs $\phi$ and $\phi'$ are right equivalent if and only if the tangential Lagrangian intersection problems are contact equivalent.
\end{theorem}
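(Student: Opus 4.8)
The plan is to reduce the assertion to \cref{thm:GGinCOT} by transporting both problems through the fixed cotangent bundle structure, and then to observe that a cotangent bundle structure, being a symplectomorphism, intertwines contact equivalence in $Z$ with tangential contact in $T^\ast X$. Throughout, tangentiality of the two contact problems is what guarantees that we land inside the hypotheses of the Golubitsky--Guillemin theorem.

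First I would fix the cotangent bundle structure $\Phi\colon U \to T^\ast X$ with respect to which $\Lambda$ is the image of $\d\phi$ and $\Lambda'$ the image of $\d\phi'$. Since $\Phi$ is a symplectomorphism with $\Phi^\ast\lambda = \alpha$, applying it carries $X$ to the zero-section $X_0$, fixes $z$, and replaces the two problems by $(X_0,\d\phi(X),z)$ and $(X_0,\d\phi'(X),z)$ in $T^\ast X$ with its canonical symplectic structure. The gain is that the ambient manifold is now literally a cotangent bundle with its standard structure, which is precisely the situation of \cref{thm:GGinCOT}.

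Next I would verify that the remaining hypotheses of \cref{thm:GGinCOT} hold. Because $z \in X \cap \Lambda$ and $\Lambda$ is the image of the section $\d\phi$, the value $\d\phi|_z$ lies in the zero-section, so $\d\phi|_z = 0$; together with the normalisation $\phi(z)=0$ this exhibits a critical point with critical value $0$. Tangentiality means $T_z\Lambda = T_zX$, and writing $T_z(\mathrm{image}\,\d\phi)$ in the canonical coordinates shows this is equivalent to $\Hess\phi|_z = 0$; the same applies to $\phi'$. Hence both germs have vanishing gradient and vanishing Hessian at $z$, and \cref{thm:GGinCOT} yields that $\phi$ and $\phi'$ are right equivalent if and only if $\d\phi(X)$ and $\d\phi'(X)$ have the same tangential contact with $X_0$ at $z$ in $T^\ast X$.

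It then remains to match this last property with contact equivalence of the original problems. Given a contact equivalence $\Psi\colon U \to U'$ in $Z$, the conjugate $\Phi\circ\Psi\circ\Phi^{-1}$ is a symplectomorphism between neighbourhoods of $z$ in $T^\ast X$ that fixes $z$, preserves $X_0$, and sends $\d\phi(X)$ to $\d\phi'(X)$; this is exactly the witness of same-contact in the cotangent bundle. Conversely, conjugating such a witness by $\Phi^{-1}$ produces a contact equivalence in $Z$. Chaining the two equivalences gives the theorem, and \cref{prop:TouchWellDefined} ensures the right-equivalence class is independent of the structure, so the conclusion indeed holds for any admissible choice. The only delicate point I anticipate is the bookkeeping in the reduction: one must check that the section representation of $\Lambda$ and $\Lambda'$ is faithfully preserved by $\Phi$ and, above all, that the geometric tangency condition in $Z$ translates exactly into the vanishing of the Hessian in the chart; everything else follows formally from $\Phi$ being a symplectomorphism.
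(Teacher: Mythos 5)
Your proof is correct, but it takes a genuinely different route from the paper's. You reduce the statement to the quoted Golubitsky--Guillemin theorem (\cref{thm:GGinCOT}): transport both problems into $T^\ast X$ via the fixed cotangent bundle structure, verify its hypotheses (the gradient of $\phi$ vanishes at $z$ because $z \in X \cap \Lambda$ forces $\d \phi|_z$ to be the zero covector, and the Hessian vanishes because tangentiality $T_z\Lambda = T_zX$ translates, for a section image, exactly into $\Hess \phi|_z = 0$), and then note that conjugation by the structure identifies contact equivalence in $Z$ (\cref{def:ContEQ}) with the ``same tangential contact'' notion of \cref{thm:GGinCOT}. These steps are all sound. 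The paper, by contrast, never invokes \cref{thm:GGinCOT} (which it states without proof, citing Golubitsky and Guillemin); it argues both directions from its own machinery: the forward direction by cotangent-lifting the right equivalence $r$ with $\phi = \phi' \circ r$ to a symplectomorphism $R$ satisfying $R(\d \phi'|_x) = \d \phi|_{r^{-1}(x)}$, which is itself the required contact equivalence, and the backward direction by using the contact equivalence $\Psi$ to define a second cotangent bundle structure $\pi' = \pi \circ \Psi^{-1}$, in which $\Lambda'$ is generated by $\phi$, and then applying the well-definedness result \cref{prop:TouchWellDefined}. Your route buys brevity and a clean conceptual summary (the theorem is Golubitsky--Guillemin's theorem transported by a symplectomorphism), at the cost of outsourcing the analytic core to the cited classical result and of one small definitional reconciliation: the introduction's notion of same contact uses a single neighbourhood $U$, whereas \cref{def:ContEQ} uses a pair $U$, $U'$, and your conjugation produces the latter, so strictly one should remark that the two notions agree at the level of germs. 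The paper's route buys self-containedness (everything rests on \cref{lem:CompareCotStr,lem:HFormula}, which it reproves) and, more importantly, a proof scheme---push the cotangent bundle structure forward by the contact equivalence, then appeal to well-definedness of generating functions---that generalizes directly to the non-tangential case in \cref{prop:CoEqvStabEqv}, which is the paper's main objective and where a black-box reduction to \cref{thm:GGinCOT} is no longer available.
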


\begin{proof} In the following, we may shrink $Z,X,\Lambda,\Lambda'$ repeatedly to intersections with open neighbourhoods of $z$ without mentioning.
Assume $\phi = \phi' \circ r$ for a right equivalence $r$ such that $r$ fixes $z$. Its cotangent lifted map\footnote{The cotangent lift $\Psi\colon T^\ast X' \to T^\ast X$ of a diffeomorphism $\psi \colon X \to X'$ is defined as $\Psi(\alpha') = \alpha' \circ \d \psi|_{\psi^{-1}(\pi'(\alpha'))}$, where $\pi'\colon T^\ast X' \to X'$ is the bundle projection (see \cite[\S 6.3]{Marsden1999}).\label{ft:CotLift}} $R$ fixes $X$ and
\[
R(\d \phi'|_x) 
= \d \phi'|_x \circ \d r|_{r^{-1}(x)} 
= \d (\phi' \circ r)|_{r^{-1}(x)} 
= \d \phi|_{r^{-1}(x)}, \qquad x \in X.
\]
Therefore, the symplectic diffeomorphism $R$ maps $\Lambda'$ to $\Lambda$ and, thus, provides a contact equivalence between $(X,\Lambda,z)$ and $(X,\Lambda',z)$.

Now assume there exists a symplectic diffeomorphism with $\Phi(X)=X$, $\Phi(z)=z$ and $\Phi(\Lambda)=\Lambda'$. Choose a cotangent bundle structure with projection denoted by $\pi \colon Z \to X$ such that $\Lambda$ is the image of the section $\d \phi$ and $\Lambda'$ the image of the section $\d \phi'$ around $z$ with $\phi(z)=0=\phi'(z)$.
Using $\Phi$ we can obtain a new cotangent bundle structure with bundle projection $\pi' \colon Z \to X$ such that $\pi' = \pi \circ \Phi^{-1}$. 
The map $\Phi \circ \d \phi$ maps $X$ onto $\Lambda'$ and is a section of $\pi' \colon Z \to X$. This means $\Lambda'$ can be represented by $\d \phi$ in the new structure. The situation is illustrated in \cref{fig:CotStr}.
\begin{figure}
\begin{center}
\[
\begin{tikzcd}
Z\arrow{rr}{\Phi}
\arrow{dr}{\Psi}
\arrow[bend right=45,swap]{ddr}{\pi}
&
&Z\arrow[swap]{dl}{\Psi' }
\arrow[bend left=45]{ddl}{\pi'}\\
&T^\ast X\arrow{d} &\\
& X &
\end{tikzcd}
\]
\caption{Illustration to the second part of the proof of \cref{prop:TouchContactifRequiv}.
$Z$ has two cotangent bundle structures given by $\Psi$ and $\Psi' = \Psi \circ \Phi^{-1}$. The image in $T^\ast X$ of the section $\d \phi$ is identified with $\Lambda \subset Z$ if the bundle structure $\Psi$ is used and is identified with $\Lambda' \subset Z$ if the structure $\Psi'$ is used.}\label{fig:CotStr}
\end{center}
\end{figure}
Therefore, by \cref{prop:TouchWellDefined}, the function germs $\phi$ and $\phi'$ must be right equivalent.
\end{proof}

\begin{remark}
The hypothesis that $\Lambda$ and $\Lambda'$ are tangential to $X$ is used in two ways in the proof of \cref{prop:TouchContactifRequiv}: while it is convenient that in any contangent bundle structure $\Lambda$ and $\Lambda'$ are locally around their point of contact images of sections (see \cref{rem:anyBundleSection}), more importantly the hypothesis is needed to apply \cref{prop:TouchWellDefined} which, as it relies on \cref{lem:HFormula}, needs that the 2-jets of $\phi$ and $\phi'$ vanish at the point of contact.
\end{remark}


We recall the well-known 
{\em parametric Morse Lemma} or {\em Splitting Lemma} and establish some notation.

\begin{lemma}[parametric Morse lemma]\label{lem:ParametricMorse}
Let $\phi \colon (\R^n,0)\to(\R,0)$ be a function germ with critical point at the origin 0. Consider the decomposition $\R^n = \overline X \oplus \underline X$ for two linear subspaces $\overline X$ and $\underline X$ such that the Hessian matrix $B$ of the restriction $\phi|_{\underline X}\colon (\underline X,0)\to(\R,0)$ is invertible. There exists a change of coordinates $K$ on a neighbourhood of $0 \in \R^n$ of the form $K(\overline x,\underline x)=(\overline x,\kappa(\overline x,\underline x))$ with $K(0, 0)=( 0,0)$ such that
\[
(\phi \circ K)(\overline x,\underline x) = f(\overline x )+{\underline x}^{\top} B \underline x.
\]
If we choose the dimension of $\underline X$ to be maximal with respect to the property that $\phi|_{\underline{X}}$ is invertible\footnote{We can choose $\dim \underline X$ to be the rank of the Hessian matrix of $\phi$ at $0$} then the 2-jet of $f$ vanishes. 
\end{lemma}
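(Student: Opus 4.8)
The plan is to first use the implicit function theorem to separate off the transverse directions, and then to normalise the remaining quadratic part by a parameter-dependent congruence. Writing a point of $\R^n=\overline X\oplus \underline X$ as $(\overline x,\underline x)$ and regarding $\overline x$ as a parameter, the hypothesis that $0$ is a critical point gives $\p_{\underline x}\phi(0,0)=0$, while $\p^2_{\underline x\underline x}\phi(0,0)=B$ is invertible by assumption. Hence the implicit function theorem produces a smooth germ $g$ with $g(0)=0$ and $\p_{\underline x}\phi(\overline x,g(\overline x))\equiv 0$, so that $g(\overline x)$ is, for each $\overline x$, the unique nondegenerate critical point of the restriction $\underline x\mapsto\phi(\overline x,\underline x)$. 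I would set $f(\overline x)\coloneqq\phi(\overline x,g(\overline x))$ and shift the transverse variable by $\underline y\coloneqq\underline x-g(\overline x)$; this shift is already a coordinate change of the required form $(\overline x,\underline x)\mapsto(\overline x,\,\cdot\,)$ fixing the origin.

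Because $\p_{\underline y}\phi$ now vanishes identically on $\{\underline y=0\}$, Taylor's theorem with integral remainder (Hadamard's lemma) in the $\underline y$ variable yields a smooth, symmetric-matrix-valued germ $Q$ with $\phi(\overline x,g(\overline x)+\underline y)=f(\overline x)+\underline y^\top Q(\overline x,\underline y)\,\underline y$ and $Q(0,0)=B$, where $B$ is identified with the matrix of the transverse quadratic form as in the statement. The remaining task is to find a smooth family $C(\overline x,\underline y)$ of invertible matrices with $C(0,0)=\Id$ such that $Q(\overline x,\underline y)=C(\overline x,\underline y)^\top B\, C(\overline x,\underline y)$. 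Then the substitution $\underline z\coloneqq C(\overline x,\underline y)\,\underline y$ is a local diffeomorphism in the transverse variable (its Jacobian equals $\Id$ at the origin), and, composed with the earlier shift, it provides a coordinate change $K$ of the stated form $K(\overline x,\underline x)=(\overline x,\kappa(\overline x,\underline x))$ with $(\phi\circ K)(\overline x,\underline x)=f(\overline x)+\underline x^\top B\,\underline x$.

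I expect the construction of this smooth congruence $C$ to be the main obstacle; it is exactly the content of the parametric Morse reduction. The point that makes it possible is that the signature of the nondegenerate symmetric matrix $Q(\overline x,\underline y)$ is locally constant and hence equals that of $B=Q(0,0)$ near the origin. One may then either invoke a standard parametric Morse lemma or carry out Morse's inductive completion-of-squares argument, checking at each step that the resulting congruence depends smoothly on the parameters $(\overline x,\underline y)$. An alternative that stays close to techniques already developed here is the homotopy method used in the proof of \cref{lem:HFormula}.

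Finally, for the statement about the vanishing $2$-jet I would take $\dim\underline X=\rank\Hess\phi(0)$ maximal and choose the complement $\overline X\coloneqq\ker\Hess\phi(0)$, which is admissible since $B$ is nondegenerate on $\underline X$. Differentiating $f(\overline x)=\phi(\overline x,g(\overline x))$, and using $g'(0)=-B^{-1}\p^2_{\underline x\overline x}\phi(0)$ obtained from the defining relation $\p_{\underline x}\phi(\overline x,g(\overline x))\equiv 0$, gives $f(0)=0$, $\nabla f(0)=0$, and $\Hess f(0)$ equal to the Schur complement $\p^2_{\overline x\overline x}\phi(0)-\p^2_{\overline x\underline x}\phi(0)\,B^{-1}\,\p^2_{\underline x\overline x}\phi(0)$. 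Taking $\overline X$ to be the kernel of $\Hess\phi(0)$ forces the blocks $\p^2_{\overline x\overline x}\phi(0)$ and $\p^2_{\overline x\underline x}\phi(0)$ to vanish, so this Schur complement is zero; together with $f(0)=0$ and $\nabla f(0)=0$ this shows that the $2$-jet of $f$ vanishes.
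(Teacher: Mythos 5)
The paper itself gives no proof of this lemma: it is quoted from the literature (Br\"ocker--Lander \S 14.12, Wassermann's Lemma 5.12, and Weinstein for the fibred version with uniqueness), so there is no in-paper argument to compare yours against. Judged on its own terms, your outline is the standard direct proof and its architecture is sound: the implicit function theorem applies because $\p_{\underline x}\phi(0,0)=0$ and $\p^2_{\underline x \underline x}\phi(0,0)$ is invertible; the shift by $g$, the Hadamard-type expansion $\phi(\overline x,g(\overline x)+\underline y)=f(\overline x)+\underline y^{\top}Q(\overline x,\underline y)\,\underline y$ with $Q$ smooth and symmetric, and the congruence substitution are all fibred over $\overline x$ and fix the origin, so their composition $K$ has the required form $(\overline x,\underline x)\mapsto(\overline x,\kappa(\overline x,\underline x))$. (The factor-of-two tension between ``$B$ is the Hessian of $\phi|_{\underline X}$'' and ``the quadratic form is $\underline x^{\top}B\underline x$'' is already present in the statement itself, so your identification of $B$ with the matrix of the quadratic form is the intended reading.)

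Two places need tightening. First, the congruence family $C$ is asserted rather than constructed, and it is the technical heart of the lemma; it can be closed in a few lines without induction, signature arguments, or a homotopy: set $M(\overline x,\underline y)\coloneqq B^{-1}Q(\overline x,\underline y)$, which is close to $\Id$ near the origin, and let $C\coloneqq M^{1/2}$ via the power series of $\sqrt{1+t}$, which depends smoothly on $(\overline x,\underline y)$ and satisfies $C(0,0)=\Id$. Since $BM=Q$ and $B$ are symmetric, $M^{\top}=BMB^{-1}$, hence $C^{\top}=BCB^{-1}$ and $C^{\top}BC=BC^{2}=BM=Q$, which is exactly the required congruence. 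Second, your proof of the $2$-jet claim covers only the particular complement $\overline X=\ker\Hess\phi(0)$, whereas the lemma asserts the conclusion for \emph{any} decomposition with $\dim\underline X$ maximal; this generality is what the paper actually needs later (in the proof of \cref{lem:MorseRedForm} the complement is an arbitrary transversal to $\underline X$, not the kernel). Your own computation contains the fix: you showed $\Hess f(0)$ is the Schur complement of the invertible block $\p^2_{\underline x\underline x}\phi(0)$ in $\Hess\phi(0)$, and rank additivity for Schur complements gives $\rank\Hess\phi(0)=\dim\underline X+\rank\bigl(\Hess f(0)\bigr)$, so maximality of $\dim\underline X$ forces $\Hess f(0)=0$ for every admissible choice of $\overline X$, with no need to take $\overline X$ to be the kernel.
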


A proof is given in \cite[\S 14.12]{brocker1975} or in \cite[Lemma 5.12]{Wassermann1974}. A fibred version including uniqueness results can be found in \cite{Weinstein1971}.

We leave the setting of tangential Lagrangian contact problems and extend \cref{prop:TouchWellDefined}: a function germ assigned to a (not necessarily tangential) Lagrangian intersection problem using any cotangent bundle structure for which the intersection problem is graphical is well-defined up to stably right equivalence. For this we first prove the following lemma.

\begin{lemma}\label{lem:SwitchOn}
On $\R^n = \overline X \oplus \underline X$ consider
\begin{itemize}
\item
coordinates $x=(\overline x, \underline x)=((x^1,\ldots,x^k),(x^{k+1},\ldots,x^n))$,
\item
a nondegenerate symmetric matrix $Q \in \R^{(n-k) \times (n-k)}$,
\item
a function $g \colon \R^k \to \R$ whose 2-jet vanishes at 0,
\item
and a matrix valued function $\mathcal H \colon \R^n \to \mathrm{Sym}(n)$ with 
\[
\mathcal H(x) = (h_{ij}(x))_{i,j=1,\ldots,n}
=
\begin{pmatrix}
H^{11}(x) & H^{12}(x) \\
{H^{12}}(x)^{\top}&0
\end{pmatrix} \in \mathrm{Sym}(n)  \subset \R^{n \times n}
\]
for $H^{11}(x) \in \mathrm{Sym}(k)$ and $H^{12}(x) \in \R^{k \times (n-k)}$.
\end{itemize}
For $t \in \R$ let
\begin{align*}
\psi (x)    & = g(\overline x) + \underline x^{\top} Q \underline x \\
\psi_t (x) & = g(\overline x) + \underline x^{\top} Q \underline x + t ( \nabla \psi(x)^{\top} \mathcal H(x) \nabla \psi(x)).
\end{align*}
Then $\psi_t$ is right equivalent to $\psi=\psi_0$ around $x=0$ and the right equivalence fixes $x=0$.
\end{lemma}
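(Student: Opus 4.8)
The plan is to prove the statement by the homotopy (Moser) method, following the proof of \cref{lem:HFormula} almost verbatim; the only new feature is the bookkeeping forced by the nondegenerate block $\underline x^\top Q \underline x$. Writing $P(x) \coloneqq \nabla\psi(x)^\top \mathcal H(x)\nabla\psi(x)$, I note that $P$ does not depend on $t$, so $\psi_t = \psi + tP$ is affine in $t$ with $\frac{\d}{\d t}\psi_t = P$. The goal is to construct a time-dependent vector field $w(x,t)$ with $w(0,t)=0$ solving the infinitesimal conjugacy equation $\frac{\d}{\d t}\psi_t + \langle \nabla\psi_t, w\rangle = 0$; integrating its flow then produces local diffeomorphisms $f_t$ fixing $0$ with $\psi_t \circ f_t = \psi_0$, and $f_1$ is the desired right equivalence.

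First I record two facts at the origin. Since the $2$-jet of $g$ vanishes, $\nabla\psi(0)=0$ (so $0$ is a critical point of every $\psi_t$) and $\Hess\psi(0) = \diag(0,2Q)$, i.e.\ the $\overline x$-block of the Hessian vanishes while the $\underline x$-block equals the invertible matrix $2Q$. Next, differentiating $P = \sum_{i,j}\frac{\p\psi}{\p x_i}h_{ij}\frac{\p\psi}{\p x_j}$ and using the symmetry of $\mathcal H$, exactly as in \eqref{eq:relateNablaPsis}, yields $\frac{\p P}{\p x_l} = \sum_j C_{lj}\frac{\p\psi}{\p x_j}$ with $C_{lj} = \sum_i\bigl(\frac{\p h_{ij}}{\p x_l}\frac{\p\psi}{\p x_i} + 2 h_{ij}\frac{\p^2\psi}{\p x_i \p x_l}\bigr)$, so that $\nabla\psi_t = B\,\nabla\psi$ for $B \coloneqq \Id + tC$.

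The crux, and the step I expect to be the only genuine obstacle, is to show that $B(t,0)$ is invertible for all $t\in[0,1]$; then $B$ is invertible on a neighbourhood of $0$, and setting $w \coloneqq -B^{-\top}\mathcal H\nabla\psi$ gives $\frac{\d}{\d t}\psi_t = \nabla\psi^\top\mathcal H\nabla\psi = \nabla\psi_t^\top(-w)$ together with $w(0,t) = -B(t,0)^{-\top}\mathcal H(0)\nabla\psi(0) = 0$, using $\nabla\psi(0)=0$. Unlike in \cref{lem:HFormula}, here $C(0)\neq 0$ in general because $\Hess\psi(0)$ no longer vanishes, so I must evaluate $C(0)$ by hand. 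The first summand dies since $\nabla\psi(0)=0$, leaving $C_{lj}(0) = 2\sum_i \Hess\psi(0)_{li}\,h_{ij}(0)$. For $l\le k$ this is zero because the $\overline x$-block of $\Hess\psi(0)$ vanishes. For $l>k$ only indices $i>k$ contribute, and the hypothesis that the $(\underline x,\underline x)$-block of $\mathcal H$ vanishes forces $h_{ij}(0)=0$ whenever $i,j>k$, so $C_{lj}(0)=0$ for $j>k$ as well. Hence $C(0)$ is strictly block-lower-triangular, with only its $(\underline x,\overline x)$-corner possibly nonzero; it is therefore nilpotent ($C(0)^2=0$), and $B(t,0)=\Id + tC(0)$ is unipotent with inverse $\Id - tC(0)$.

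It is precisely this vanishing of the $(\underline x,\underline x)$-block of $\mathcal H$ that makes the argument work: were that block nonzero, $C(0)$ would acquire an $(\underline x,\underline x)$-corner proportional to $Q$ times that block, and $\Id + tC(0)$ could become singular for some $t\in[0,1]$, destroying the uniform invertibility of $B$. With invertibility secured, standard ODE theory solves $\frac{\d}{\d t}f_t = w(f_t,\cdot)$, $f_0 = \id$, on a neighbourhood of $0$ over $t\in[0,1]$; since $w(0,t)=0$ the flow fixes $0$, and $\frac{\d}{\d t}(\psi_t\circ f_t)=0$ gives $\psi_t\circ f_t = \psi_0$, so $f_1$ is the required origin-fixing right equivalence from $\psi_1$ (and, along the way, from each $\psi_t$) to $\psi$.
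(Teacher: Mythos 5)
Your proposal is correct and follows essentially the same route as the paper's proof: the homotopy method of \cref{lem:HFormula} with the matrix $B(t,x)=\Id+tC(t,x)$, where the key point is that $C(0)=2\,\Hess\psi(0)\,\mathcal H(0)$ is block-lower-triangular (hence $B(t,0)$ is unipotent and invertible for all $t$) precisely because the $(\underline x,\underline x)$-block of $\mathcal H$ vanishes. The paper states this invertibility directly from the displayed block form; your nilpotency remark is the same observation made explicit.
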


\begin{proof}
Motivated by the proof of \cref{lem:HFormula} we define the components $B_{lj}(t,x)$ of a matrix $B(t,x) \in \R^{n\times n}$ as
\[
B_{lj} = \delta_{lj}+ t \sum_{i} \left(\frac{\p  h_{ij}}{\p x_l}  \frac{\p \psi}{\p x_i} + 2  h_{ij} \frac{\p^2 \psi}{\p x_i \p x_l}\right).
\]
We have
\begin{equation}\label{eq:BlocallyInvertible}
B(t,0)=\Id_n + 2 t \Hess \psi(0) \mathcal H(0)
= \begin{pmatrix}
\Id_k&0\\
4 t Q {H^{12}}(0)^{\top}& \Id_{n-k},
\end{pmatrix}
\end{equation}
which is invertible for all $t$. Using the same calculation as in \eqref{eq:relateNablaPsis} we obtain
\[
\nabla \psi_t (x) = B(t,x) \nabla \psi(x).
\]
Therefore,
\begin{equation}\label{eq:ddtpsit}
\frac{\d}{\d t} \psi_t 
= \nabla \psi ^{\top} \mathcal H \nabla \psi
= \nabla \psi_t^{\top} \underbrace{B^{-T} \mathcal H \nabla \psi}_{=: - \omega}
= -\langle \nabla \psi_t , \omega \rangle.
\end{equation}
There exists a neighbourhood $U$ of $0 \in \R^n$ such that the initial value problem
\[
\frac{\d }{\d t} f_t(x) = \omega(f_t(x),t), \quad f_0(x)=x
\]
is solvable for all $x \in U$ and all $t \in [0,1]$.
Since $\omega(0,t)=0$ we have $f_t(0)=0$ and
\begin{align*}
\frac{\d}{\d t} (\psi_t \circ f_t)
&\;=  \frac{\d}{\d t} (\psi_t) \circ f_t + \left\langle \nabla\psi_t \circ f_t , \frac{\d}{\d t} f_t \right\rangle\\
&\;=  \frac{\d}{\d t} (\psi_t) \circ f_t + \left\langle \nabla\psi_t  , \omega  \right\rangle \circ f_t\\
&\stackrel{\eqref{eq:ddtpsit}}=0.
\end{align*}
Since $f_0=\id_U$ we have
\[
\psi_t \circ f_t = \psi_0 \circ f_0 = \psi
\]
and $f_t$ is the required right equivalence.
\end{proof}


\begin{proposition}\label{prop:IndofCotStructure}
Let $(X,\Lambda,z)$ be a Lagrangian contact problem in $Z$.
After shrinking $Z$ to a sufficiently small open neighbourhood of $z$, consider two cotangent bundle structures over $X$ with projections $\pi^\alpha \colon Z \to X$, $\pi^\beta \colon Z \to X$ such that $\Lambda$ is given as the image of the section $\d \phi^\alpha$ and $\d \phi^\beta$, respectively, whereas $\phi^\alpha(z)=0=\phi^\beta(z)$.
Then the germs of $\phi^\alpha$ and $\phi^\beta$ at $z$ are stably right equivalent.
\end{proposition}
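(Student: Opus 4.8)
The plan is to run the argument behind \cref{prop:TouchWellDefined}, but to insert the splitting lemma so as to peel off the nondegenerate directions, in which the signature (and nothing else) is allowed to change. First I would compare the two structures by \cref{lem:CompareCotStr}: with $\alpha=(\Phi^\alpha)^\ast\lambda$ and $\beta=(\Phi^\beta)^\ast\lambda$, the closed form $\alpha-\beta$ has a primitive $H(x,\xi)=\sum_{i,j}h_{ij}(x,\xi)\,\xi_i\xi_j$, and on $\Lambda$ one obtains $\phi^\beta\circ k_{\alpha\beta}=\phi^\alpha+H(x,\nabla\phi^\alpha)$. Since $z$ is an intersection point with $\phi^\alpha(z)=0=\phi^\beta(z)$, the additive constant vanishes by the remark following \cref{lem:CompareCotStr}, and because $k_{\alpha\beta}$ is a local diffeomorphism fixing $z$ the germ $\phi^\beta$ is right equivalent to $\phi^\alpha+(\nabla\phi^\alpha)^\top\mathcal H\,\nabla\phi^\alpha$, where $\mathcal H(x)=\bigl(h_{ij}(x,\nabla\phi^\alpha(x))\bigr)$ is symmetric. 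It therefore suffices to compare $\phi^\alpha$ with this self-perturbation.

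Next I would apply the parametric Morse lemma (\cref{lem:ParametricMorse}) to $\phi^\alpha$, obtaining $K(\overline x,\underline x)=(\overline x,\kappa(\overline x,\underline x))$ with $\phi^\alpha\circ K=\psi\coloneqq g(\overline x)+\underline x^\top Q\,\underline x$, where $g$ has vanishing $2$-jet and $Q$ is nondegenerate. Using $\nabla\phi^\alpha\circ K=\D K^{-\top}\nabla\psi$, the perturbation transports to
\[
\bigl(\phi^\alpha+(\nabla\phi^\alpha)^\top\mathcal H\nabla\phi^\alpha\bigr)\circ K=\psi+\nabla\psi^\top\tilde{\mathcal H}\,\nabla\psi=:\Psi,\qquad \tilde{\mathcal H}=\D K^{-1}(\mathcal H\circ K)\D K^{-\top},
\]
which is again symmetric, and $\Psi$ is right equivalent to $\phi^\beta$. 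By \cref{lem:ParametricMorse} together with \cref{def:stablyrequiv}, two germs are stably right equivalent exactly when their fully degenerate (residual) parts are right equivalent, so the whole claim reduces to showing that $\Psi$ and $\psi$ have the same residual part.

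The hard part is that $\tilde{\mathcal H}$ will in general carry a nonzero lower-right block $\tilde{\mathcal H}^{22}$, so \cref{lem:SwitchOn} cannot be applied to $\Psi$ directly; this block feeds the term $(2Q\underline x)^\top\tilde{\mathcal H}^{22}(2Q\underline x)$ into $\Psi$ and can flip the signature of the nondegenerate form, which is precisely why the conclusion is only stable, not strict, right equivalence. To handle it I would split $\tilde{\mathcal H}=\mathcal A+\mathcal B$ into its zero-lower-right-block part $\mathcal A$ and its lower-right block $\mathcal B$, and adapt the homotopy method from the proof of \cref{lem:SwitchOn} to the path $\Theta_t\coloneqq\psi+\nabla\psi^\top(\mathcal B+t\mathcal A)\nabla\psi$; that is, I would switch on only $\mathcal A$ while keeping $\mathcal B$ fully on throughout. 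The key point is that the associated matrix $B_t(0)=\Id+2\Hess\psi(0)(\mathcal B+t\mathcal A)(0)$ is block lower triangular with diagonal blocks $\Id_k$ and $\Id_{n-k}+4Q\tilde{\mathcal H}^{22}(0)$, both independent of $t$; the second is invertible because $\Hess\Psi(0)$ has corank $k=\dim(T_z\Lambda\cap T_zX)$ — a symplectic invariant equal to the corank of the Hessian of $\phi^\alpha$ and of $\phi^\beta$ alike — so $B_t(0)$ stays invertible for all $t\in[0,1]$ even though $\mathcal B$ is never homotoped through a degeneracy. This yields $\Psi$ right equivalent to $\Theta_0=\psi+\nabla\psi^\top\mathcal B\nabla\psi=g(\overline x)+\underline x^\top Q^\flat(x)\,\underline x$ with $Q^\flat(0)=Q+4Q\tilde{\mathcal H}^{22}(0)Q$ nondegenerate.

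Finally, since $\Theta_0$ has no terms linear in $\underline x$, its fibrewise critical point stays at $\underline x=0$, so a last application of \cref{lem:ParametricMorse} splits it as $g(\overline x)+\underline x^\top\hat Q\,\underline x$ with residual part exactly $g$. Hence $\Psi$, and therefore $\phi^\beta$, has the same residual part $g$ as $\phi^\alpha$, only possibly with a quadratic form of different signature; by \cref{def:stablyrequiv} this is precisely stable right equivalence. I expect the main obstacle throughout to be the lower-right block: isolating its effect to the signature of the nondegenerate part, rather than letting it corrupt the residual germ, is exactly what the decomposition $\tilde{\mathcal H}=\mathcal A+\mathcal B$ and the ``$\mathcal B$ always on'' homotopy are designed to achieve.
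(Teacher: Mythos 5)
Your proof is correct, and it shares the paper's skeleton --- \cref{lem:CompareCotStr}, a Morse splitting of $\phi^\alpha$, the homotopy method, and the same dimension argument for invertibility --- but it executes the central step genuinely differently. The paper absorbs the $\mathcal H^{22}$-contribution into an $x$-dependent quadratic form $B'(x)=B+4B\mathcal H^{22}(x)B$, normalises $B'(x)$ to the constant $B'(0)$ via Sylvester's law of inertia (a smooth family $A(x)$ and the fibred diffeomorphism $r(\overline x,\underline x)=(\overline x,A(x)\underline x)$), applies \cref{lem:SwitchOn} verbatim to the transported blocks $\tilde{\mathcal H}^{11},\tilde{\mathcal H}^{12}$, and finishes by matching the result term by term with $\phi^\beta\circ k_{\alpha\beta}$. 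You instead split $\tilde{\mathcal H}=\mathcal A+\mathcal B$ and run the homotopy along $\mathcal B+t\mathcal A$; note this cannot cite \cref{lem:SwitchOn} as a black box (its hypothesis is a vanishing lower-right block), so you must repeat its proof for the modified path --- which does work, precisely because the diagonal blocks of $B_t(0)$ are $t$-independent and the invertibility of $\Id_{n-k}+4Q\tilde{\mathcal H}^{22}(0)$ follows from your corank argument, which is the paper's dimension argument in disguise ($\ker\Hess\phi^\alpha(0)$ and $\ker\Hess\phi^\beta(0)$ both realise $T_zX\cap T_z\Lambda$). You then remove the remaining $x$-dependent form $Q^\flat(x)$ by a second application of \cref{lem:ParametricMorse}, correctly using that fibred coordinate changes preserve fibrewise critical loci and that the fibrewise critical locus of $\Theta_0$ is exactly $\{\underline x=0\}$, so the residual part is literally $g$. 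What each route buys: yours avoids Sylvester's law of inertia and the explicit final matching computation, and it isolates transparently where stability (the possible signature change) enters; the paper's route keeps \cref{lem:SwitchOn} as a reusable module, which pays off in the parametric setting (\cref{lem:SwitchOnParameter}, \cref{prop:IndofCotStructureParameter}) --- a parametric version of your argument would additionally need the uniqueness statement of \cref{lem:MorseRedForm} for the second Morse reduction. One cosmetic point: you only need the easy direction of ``stably right equivalent iff the residual parts are right equivalent'' (same residual part implies stable equivalence, by adding complementary quadratic forms and swapping variables); the converse is the nontrivial cancellation law (\cref{thm:CanellationLaw}), which your argument never actually uses.
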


\begin{proof}
By the parametric Morse Lemma (\cref{lem:ParametricMorse}) there exist coordinates $x=(\overline x, \underline x)=((x^1,\ldots,x^k),(x^{k+1},\ldots,x^n))$ on $X$ centred at $z$ such that
\[
\phi^\alpha (x) = f(\overline x)+\underline x^{\top} B \underline x
\]
for a smooth function germ $f$ with vanishing 2-jet at $\overline x=0$ and an invertible symmetric matrix $B$. The function $\phi^\alpha$ is stably right equivalent to $f$.
By \cref{lem:CompareCotStr} we have
\begin{equation}\label{eq:ApplyCompCot}
\phi^\beta \circ k_{\alpha \beta} = \phi^\alpha + H(x,\nabla \phi^\alpha) ,
\end{equation}

for a function
\[
H(x,\xi) = \sum_{i,j =1}^n h_{ij}(x,\xi) \xi_i \xi_j
= \xi^{\top}\begin{pmatrix}
H^{11}(x,\xi)&H^{12}(x,\xi)\\
H^{12}(x,\xi)^{\top}&H^{22}(x,\xi)
\end{pmatrix}\xi
\]
with matrices $H^{11}(x,\xi)\in \mathrm{Sym}(k)$, $H^{22}(x,\xi)\in\mathrm{Sym}(n-k)$, and $H^{12}(x,\xi) \in \R^{k \times (n-k)}$ and with $k_{\alpha\beta} = \pi^\beta \circ (\pi^\alpha|_\Lambda)^{-1}$.
For $i,j \in \{1,2\}$ define
\[
\mathcal H(x) 
\coloneqq\mathcal H(\overline x, \underline x) 
\coloneqq H(x,\nabla \phi^\alpha(x)),
\quad
\mathcal H^{ij}(x) 
\coloneqq \mathcal H^{ij}(\overline x, \underline x) 
\coloneqq H^{ij}(x,\nabla \phi^\alpha(x)).
\]
Therefore,
\[
\mathcal H(x)  
=  \nabla_{\overline x}f(\overline x)^{\top} \mathcal H^{11}(x) \nabla_{\overline x} f(\overline x)
+ 4 \underline x^{\top} B \mathcal H^{12}(x)^{\top} \nabla_{\overline x} f(\overline x)
+ 4 \underline x^{\top} B \mathcal H^{22}(x)B \underline x.
\]

We calculate
\begin{equation}\label{eq:phibetacoords}
\begin{split}
\phi^\beta(k_{\alpha \beta}(\overline x, \underline x))
&\stackrel{\eqref{eq:ApplyCompCot}}= \phi^\alpha(\overline x, \underline x) + \mathcal H(x)\\
&\;\;\;=  f(\overline x)+\underline x^{\top} (B+4B\mathcal H^{22}(x)B) \underline x\\
&\;\;\;\phantom{=}+ \nabla_{\overline x}f(\overline x)^{\top} \mathcal H^{11}(x) \nabla_{\overline x} f(\overline x)
+ 4 \underline x^{\top} B \mathcal H^{12}(x)^{\top} \nabla_{\overline x} f(\overline x). 
\end{split}
\end{equation}

Let us define
\[
B'(x) = B+4B\mathcal H^{22}(x)B.
\]

The kernel of $\Hess (\phi^\alpha)$ and the kernel of $\Hess(\phi^\beta)$ at $(\overline x, \underline x)=(0,0)$ both describe the intersection $T_z X \cap T_z \Lambda$ (but in different coordinates). Indeed, the kernel of $\Hess(\phi^\beta \circ k_{\alpha \beta})$ must coincide with the kernel of $\Hess(\phi^\alpha)$ which is $\overline X = \{\underline x=0\}$. We calculate the Hessian matrix of $\phi^\beta \circ k_{\alpha \beta}$ at $(\overline x, \underline x)=(0,0)$ using \eqref{eq:phibetacoords} and obtain
\[
\Hess(\phi^\beta \circ k_{\alpha \beta})(0,0)
=\begin{pmatrix}
0&0\\0& 2B'(0)
\end{pmatrix}.
\]
Since $\Lambda$ is graphical in both cotangent bundle structures, i.e.\ a restriction of the bundle projections to $\Lambda$ yields a diffeomorphism between $\Lambda$ and $\pi(\Lambda)$ around $z$, $B'(0)$ must be invertible by a dimension argument. The function $\phi^\alpha$ is stably right equivalent to
\[
\phi^\alpha_1(x) \coloneqq f(\overline x) + \underline x^{\top} B'(0) \underline x.
\]
For $x$ in a sufficiently small neighbourhood of $0$ the signature of $B'(x)$ is constant. By Sylvester's law of inertia, there exists a smooth family of invertible matrices $A(x)$ such that
\begin{equation}\label{eq:SylvesterBprime}
A(x)^{-T}B'(x) A^{-1}(x) = B'(0)
\end{equation}
for all $x$ near 0. Consider
\[
r(\overline x,\underline x)=(\overline x, A(x) \underline x).
\]
The map $r$ fixes $x=0$ and is a diffeomorphism on a neighbourhood of $x=0$: the Jacobian matrix of $r$ is given by the block matrix
\[ \D r(x)=
\begin{pmatrix}
\Id_k & 0\\
(\p A(x))_{l=1,\ldots,k} & (\p A(x))_{l=k+1,\ldots,n} +A(x)
\end{pmatrix},
\]
where $(\p A(x))_{l=1,\ldots,k}$ denotes the first $k$ columns and $(\p A(x))_{l=k+1,\ldots,n}$ the remaining $n-k$ columns of an $(n-k)\times n$ matrix $\p A(x)$ whose $l$-th column is given as
\[
(\p A(x))_{l} = 
\frac{\p A}{\p x^l}(x) \underline x, 
\]
where the derivative $\frac{\p A}{\p x^l}$ is taken component-wise. Now the determinant of $\D r(0)$ coincides with the determinant of $A(0)$ which is non-zero, so $r$ is indeed a right equivalence.

Let us define
\begin{equation}\label{eq:deftildeH}
\begin{split}
\tilde {\mathcal H}^{11}(x) &\coloneqq \mathcal H^{11}(r^{-1}(x))\\
\tilde {\mathcal H}^{12}(x) &\coloneqq \mathcal H^{12}(r^{-1}(x)) B A(r^{-1}(x))^{-1} B'(0)^{-1}
\end{split}
\end{equation}
By \cref{lem:SwitchOn} the function $\phi^\alpha_1$ is right equivalent to
\begin{align*}
\phi^{\alpha}_2(x) &\coloneqq  f(\overline x)+\underline x^{\top} B'(0) \underline x\\
&\phantom{\coloneqq}+ \nabla_{\overline x}f(\overline x)^{\top} \tilde {\mathcal H}^{11}(x) \nabla_{\overline x} f(\overline x)
+ 4 \underline x^{\top} B'(0) \tilde {\mathcal H}^{12}(x)^{\top} \nabla_{\overline x} f(\overline x). 
\end{align*}

We have
\begin{equation}\label{eq:phi2alpha}
\begin{split}
(\phi^{\alpha}_2\circ r)(x)
&=  f(\overline x)+\underline x^{\top} A(x)^{\top} B'(0) A(x)\underline x
+ \nabla_{\overline x}f(\overline x)^{\top} \tilde {\mathcal H}^{11}(r(x)) \nabla_{\overline x} f(\overline x)\\
&\phantom{=}+ 4 \underline x^{\top} A(x)^{\top} B'(0) \tilde {\mathcal H}^{12}(r(x))^{\top} \nabla_{\overline x} f(\overline x).
\end{split}
\end{equation}
Comparing \eqref{eq:phi2alpha} with \eqref{eq:phibetacoords} shows that the function $\phi^{\alpha}_2\circ r$ coincides with $\phi^\beta \circ k_{\alpha \beta}$.
Thus, the functions $\phi^\alpha$ and $\phi^\beta$ are stably right equivalent.
\end{proof}

\begin{remark}
We see from the proof of \cref{prop:IndofCotStructure} that if the intersection of $\Lambda$ and $X$ is not tangential then the dimension of $\underline X$ is greater than 0 and there exist two cotangent bundle structures such that 
$\phi^\alpha$ and $\phi^\beta$ are stably right equivalent but {\em not} right equivalent. \Cref{ex:IntersectionExample} is an instance of this phenomenon. In the general case, to a cotangent bundle structure over $X$ defined\footnote{Recall from \cref{rem:cobundleStructure1Form} that suitable 1-forms define cotangent bundle structures.} by a canonical 1-form $\alpha$ for which $\Lambda$ is graphical choose another cotangent bundle structure $\beta = \alpha + \d H$ with
\[
H(x,\xi) 
= \xi^{\top}\begin{pmatrix}
0&0\\
0& \frac 14 (B^{-1}DB^{-1} - B^{-1})
\end{pmatrix}\xi,
\]
where $D$ is an invertible symmetric matrix which has a different signature than $B$. As in the proof of \cref{prop:IndofCotStructure}, the coordinates $(x,\xi)$ refer to Darboux coordinates with respect to the $\alpha$-structure. 
We get $B'=D$ which is invertible such that $\Lambda$ is graphical for the cotangent bundle structure defined by $\beta$. However, the signatures of $\Hess \phi^\alpha(0)$ and $\Hess \phi^\beta(0)$ do not coincide.
\end{remark}


For a Lagrangian contact problem $(X,\Lambda,z)$ in a symplectic manifold $Z$. There is always a contangent bundle structure $\pi \colon Z \to X$ such that $\Lambda$ is graphical, i.e.\ the image of a smooth section of $\pi \colon Z \to X$ locally around $z$.

\begin{lemma}[Existence of generic cotangent bundle structures]\label{lem:ExistanceCotStructure}
Let $Z$ be a symplectic manifold with a Lagrangian submanifold $X$, an isotropic submanifold $\Lambda$ and a point $z \in X \cap \Lambda$. 
Shrinking $Z$ around $z$, if necessary, there exists a projection $\pi \colon Z \to  X$ with Lagrangian fibres transversal to $X$ such that the restriction $\pi|_{\Lambda} \colon \Lambda \to \pi(\Lambda)$ is a diffeomorphism. In other words $\Lambda$ is graphical for $\pi$.
\end{lemma}

\begin{proof}
The proof is similar to \cite[Lemma 2.1]{bifurHampaper}.
Let $n = \dim  X$,
$k=\dim \Lambda$,
$n_1 = \dim(T_z X \cap T_z \Lambda)$
and
$n_2 =k-n_1$.
Let $q^1,\ldots,q^n$ be coordinates on $ X$ centred at $z$. Extend these to Darboux coordinates $q^1,\ldots,q^n,p_1,\ldots,p_n$ on $Z$ centred at $z$. 
This induces coordinates
$\left.\frac{\p }{\p q^1}\right|_z,
\ldots,
\left.\frac{\p }{\p q^n}\right|_z,
\left.\frac{\p }{\p p_1}\right|_z,
\ldots,
\left.\frac{\p }{\p p_n}\right|_z$
on the tangent space $T_z Z$ of which $T_z  X$ is a linear subspace. The manifold $ X$ can be identified with a neighbourhood of 0 of $T_z  X$.
By the classification of pairs of coisotropic\footnote{The normal forms used in this context are derived by passing to the symplectic complements in the normal forms given in \cite[p.\ 16]{lor2014coisotropic}.\label{note:RefCoIsoClass}} linear subspaces \cite{lor2014coisotropic,coisoPair}, there exists a linear, symplectic change of coordinates $L$ of the coordinates on $T_z Z$ such that $T_z  X$ and $T_z  \Lambda$ are represented by the column-span of the following two matrices
\[
\begin{pmatrix}
\phantom{AA} \\[-0.75em]
\phantom{AA} & I_n & \phantom{AA} \\
\phantom{AA} \\[-0.75em]
& 0_{n\times n}\\
\phantom{AA} \\[-0.75em]
\end{pmatrix}
,
\quad
\begin{pmatrix}
I_{n_1} & 0_{n_1 \times n_2}\\
0_{(2n-k) \times n_1} &0_{(2n-k) \times n_2} \\
0_{n_2 \times n_1} & I_{n_2}
\end{pmatrix},
\]
respectively, where $I_s$ denotes an identity matrix of dimension $s \times s$ and $0_{s \times t}$ a zero matrix of dimension $s \times t$ for $s,t \in N$. Consider the linear symplectic transformation $M$ defined by
\[
M : = \begin{pmatrix}
I_n & I_n \\
0_{n\times n} & I_n
\end{pmatrix}.
\]
Applying a further symplectic change of coordinates using $M^{-1}$ on $T_z Z$, the linear spaces $T_z  X$ and $T_z  \Lambda$ are represented by the column-span of the following two matrices
\[
\begin{pmatrix}
\phantom{AA}\\
\phantom{AA} & I_n & \phantom{AA} \\
\phantom{AA}\\
& 0_{n\times n}\\
\phantom{AA}
\end{pmatrix}, 
\quad
\begin{pmatrix}
I_{n_1} & 0_{n_1 \times n_2}\\
0_{(n-k) \times n_1} &0_{(n-k) \times n_2} \\
0_{n_2 \times n_1} & I_{n_2}\\
0_{(n-n_2) \times n_1} & 0_{(n-n_2)\times n_2}\\
0_{n_2 \times n_1} & I_{n_2}
\end{pmatrix},
\]
respectively.
The linear symplectic change of coordinates $M^{-1} \circ L$ can be applied to the coordinates $q^1,\ldots,q^n,p_1,\ldots,p_n$ which will lead to a change of coordinates $M^{-1} \circ L$ on
$\left.\frac{\p }{\p q^1}\right|_z,
\ldots,
\left.\frac{\p }{\p q^n}\right|_z,
\left.\frac{\p }{\p p_1}\right|_z,
\ldots,
\left.\frac{\p }{\p p_n}\right|_z$.
Let us shrink $Z$ and all submanifolds to the considered coordinate neighbourhood.
Since a neighbourhood of $0 \in T_z  X$ is identified with $ X$, the space $ X$ is given as $\{(q,p)| p=0\}$. Now we can define $\pi \colon Z \to  X$ to be the projection $\pi(q,p) = q$ which, after shrinking $Z$ further around $z$, if necessary, fulfills all claimed properties.
\end{proof}

%
%


\begin{definition}[(fully reduced) generating function]
Let $(X,\Lambda,z)$ be a Lagrangian contact problem in $Z$. Consider a cotangent bundle structure such that $\Lambda$ is given as the image of the section $\d \phi$ locally around $z\in Z$ with $\phi(z)=0$. We refer to $\phi$, to the germ of $\phi$ at $z$, and to (the germ at 0 of) a coordinate expression of $\phi$ with centred coordinates at $z$ as a {\em generating function of $(X,\Lambda,z)$}. By the parametric Morse Lemma (\cref{lem:ParametricMorse}) there exist coordinates $x=(\overline x, \underline x)=((x^1,\ldots,x^k),(x^{k+1},\ldots,x^n))$ on $X$ centred at $z$ such that
\[
\phi (x) = f(\overline x)+\underline x^{\top} B \underline x
\]
for a smooth function germ $f$ with vanishing 2-jet and an invertible matrix $B$. The function germ $f$ is called a {\em fully reduced generating function of $(X,\Lambda,z)$}.
\end{definition}

Now \cref{prop:IndofCotStructure} can be phrased as follows.

\begin{corollary}\label{cor:GenFunWellDefined}
Generating functions of Lagrangian contact problems are defined up to stably right equivalence.
\end{corollary}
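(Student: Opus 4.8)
The plan is to observe that a generating function of $(X,\Lambda,z)$ carries exactly two sources of ambiguity — the choice of cotangent bundle structure and the choice of centred coordinates on $X$ at $z$ — and to show that neither affects the stably-right-equivalence class. I would first fix a single cotangent bundle structure and vary only the coordinates: if $x$ and $\tilde x$ are two systems of centred local coordinates on $X$ at $z$, the associated charts differ by a germ of a diffeomorphism $h \colon (\R^n,0) \to (\R^n,0)$ fixing the origin, and the two coordinate expressions of the same $\phi$ are related by precomposition with $h$. Hence they are right equivalent, and a fortiori stably right equivalent.

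Next I would invoke \cref{prop:IndofCotStructure} to handle a change of cotangent bundle structure: if $\phi^\alpha$ and $\phi^\beta$ arise from two cotangent bundle structures over $X$ near $z$, each normalised by $\phi^\alpha(z)=0=\phi^\beta(z)$ as required in the definition, then their germs at $z$ are stably right equivalent. Since stably right equivalence is an equivalence relation, combining the two steps shows that any two generating functions of $(X,\Lambda,z)$ lie in a single stably-right-equivalence class, which is exactly the assertion.

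The substantive work has already been carried out in \cref{prop:IndofCotStructure}, so I do not expect a genuine obstacle; the only points meriting care are that the normalisation $\phi(z)=0$ built into the definition is precisely what lets \cref{prop:IndofCotStructure} apply verbatim, and that the class is non-empty, since \cref{rem:existenceCotangentBundleStructures} guarantees at least one cotangent bundle structure for which $\Lambda$ is graphical and hence at least one generating function.
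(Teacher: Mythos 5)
Your proposal is correct and follows the paper's own route: the paper presents \cref{cor:GenFunWellDefined} as a direct rephrasing of \cref{prop:IndofCotStructure}, which is exactly the key ingredient you invoke, with the coordinate-change and non-emptiness observations being routine additions. Your explicit handling of the choice of centred coordinates (giving a right equivalence, hence a stably right equivalence) is a minor point the paper leaves implicit, but it does not constitute a different argument.
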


Using \cref{thm:CanellationLaw} we obtain the following corollary.

\begin{corollary}\label{cor:RedGenFunWellDefined}
Fully reduced generating functions of Lagrangian contact problems are defined up to right equivalence.
\end{corollary}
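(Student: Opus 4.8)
The plan is to deduce the statement directly from \cref{cor:GenFunWellDefined} and the cancellation law \cref{thm:CanellationLaw}. By \cref{cor:GenFunWellDefined} the generating functions attached to a Lagrangian contact problem are well-defined up to stably right equivalence. Since a fully reduced generating function $f$ is, by definition, obtained from a generating function $\phi$ by splitting off a nondegenerate quadratic form via the parametric Morse Lemma (\cref{lem:ParametricMorse})—that is, $\phi(x)=f(\overline x)+\underline x^{\top}B\underline x$ in suitable coordinates—the germ $f$ is stably right equivalent to $\phi$. Hence, given two fully reduced generating functions $f$ and $f'$ of the same problem, arising from generating functions $\phi$ and $\phi'$, transitivity of stable right equivalence yields that $f$ and $f'$ are themselves stably right equivalent. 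The only gap between this and right equivalence is the hypothesis of \cref{thm:CanellationLaw} that the two germs live on the \emph{same} number of variables.

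So the crux is to show that any two fully reduced generating functions of $(X,\Lambda,z)$ have equally many variables. First I would observe that a fully reduced generating function has vanishing $2$-jet, so the number of its variables equals the corank of the Hessian of the underlying generating function $\phi$ at $z$; this is precisely the maximal choice of $\dim\underline X$ in \cref{lem:ParametricMorse}. There are two ways to see that this corank is intrinsic. Geometrically, as already noted in the proof of \cref{prop:IndofCotStructure}, the kernel of $\Hess\phi$ at $z$ describes the intersection $T_zX\cap T_z\Lambda$: writing $\Lambda$ as the graph of $\d\phi$ over $X$, the tangent space $T_z\Lambda$ is the graph of $\Hess\phi(z)$ and meets the zero section $T_zX$ exactly in $\ker\Hess\phi(z)$. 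As $\dim(T_zX\cap T_z\Lambda)$ refers to no cotangent bundle structure, the corank, and therefore the number of variables of the fully reduced function, is the same for $\phi$ and $\phi'$.

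Alternatively, and avoiding the geometry entirely, I would argue that the corank is itself a right-equivalence invariant of the stabilised germs: if $F=f+Q_F$ and $G=f'+Q_G$ are right equivalent through a diffeomorphism $h$ fixing the origin, then at the common critical point their Hessians satisfy $\Hess F(0)=\D h(0)^{\top}\Hess G(0)\,\D h(0)$, so they are congruent and of equal rank. Because $f$ and $f'$ have vanishing $2$-jets, this rank equals the number of variables of $Q_F$ and of $Q_G$ respectively; comparing the total number of variables, which right equivalence forces to agree, then forces $f$ and $f'$ to have the same number of variables.

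With the number of variables matched, \cref{thm:CanellationLaw} upgrades the stable right equivalence of $f$ and $f'$ to genuine right equivalence, which is exactly the assertion of the corollary. I expect the main obstacle to be the bookkeeping in the previous two paragraphs—confirming that the qualifier \emph{fully reduced} pins down the number of variables intrinsically to the contact problem—whereas the final passage from stable to ordinary right equivalence is then a black-box application of \cref{thm:CanellationLaw}.
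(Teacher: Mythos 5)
Your proposal is correct and follows essentially the same route as the paper: the paper derives the corollary by applying the cancellation law (\cref{thm:CanellationLaw}) to the stable right equivalence guaranteed by \cref{cor:GenFunWellDefined}. The only difference is that you spell out the detail the paper leaves implicit---that the number of variables of a fully reduced generating function is intrinsic, being the corank of $\Hess\phi(z)$, which equals $\dim(T_zX\cap T_z\Lambda)$ as noted in the proof of \cref{prop:IndofCotStructure}---and both of your arguments for this point are valid.
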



In the following we will prove a reverse direction to the above corollaries and show that Lagrangian contact problems with stably right equivalent generating functions are contact equivalent. In other words, the stably right equivalence class of the generating function of a contact problem encodes geometric information about the local intersection problem. We prepare the proof with two lemmas.

\begin{lemma}\label{lem:CotLift}
Let $\psi \colon X \to X'$ denote a diffeomorphism and $\Psi \colon T^\ast X' \to T^\ast X$ its cotangent lifted map\footnote{See footnote \ref{ft:CotLift}.}. Consider smooth functions $\phi \colon X \to \R$ and $\phi' \colon X' \to \R$ and define $\Lambda = \d \phi(X)$ and $\Lambda' = \d \phi'(X')$. If $\Psi(\Lambda') = \Lambda$ then $\phi = \phi' \circ \psi + \mathrm{const}$.
\end{lemma}

\begin{proof}
For $x' \in X'$ the element $\Psi(\d \phi'|_{x'}) \in T^\ast X$ has the base point $\psi^{-1}(x')$. As $\Psi(\d \phi'|_{x'}) \in \Lambda$, we must have
\[
\d \phi|_{\psi^{-1}(x')} = \Psi(\d \phi'|_{x'}) = \d \phi'|_{x'} \circ \d \psi|_{\psi^{-1}(x')} = \d (\phi' \circ \psi)|_{\psi^{-1}(x')}.
\]
It follows that $\phi = \phi' \circ \psi + \mathrm{const}$.
\end{proof}

\begin{lemma}\label{lem:ChangeCoords}
Consider a symplectic diffeomorphism $\Phi \colon Z \to Z'$. Any generating function of a Lagrangian contact problem $(X,\Lambda,z)$ in $Z$ is right equivalent to any generating function of the Lagrangian contact problem $(\Phi(X),\Phi(\Lambda),\Phi(z))$ in $Z'$ when shrinking all involved manifolds to intersections with open neighbourhoods of $z \in Z$ or $\Phi(z)\in Z'$, if necessary.
\end{lemma}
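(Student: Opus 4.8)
The plan is to transport a cotangent bundle structure from $Z$ to $Z'$ along $\Phi$ by composing with the cotangent lift of the restriction $\Phi|_X$, and to check that this transport sends a generating function of $(X,\Lambda,z)$ to one of $(\Phi(X),\Phi(\Lambda),\Phi(z))$ that differs from it only by a diffeomorphism of the base. Throughout I shrink $Z,Z',X,\Lambda$ to intersections with small neighbourhoods of $z$ and $\Phi(z)$ without further comment.

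First I would fix a cotangent bundle structure $\Psi\colon U\to V\subset T^\ast X$ for $Z$ with respect to $X$ for which $\Lambda$ is graphical, so that $\Psi(\Lambda)$ is the image of a section $\d\phi$ with $\phi(z)=0$; this $\phi$ is the chosen generating function. Set $r\coloneqq\Phi|_X\colon X\to\Phi(X)$, a diffeomorphism with $r(z)=\Phi(z)$, and let $\hat r\colon T^\ast X\to T^\ast\Phi(X)$ be its cotangent lift, a symplectomorphism which intertwines the canonical $1$-forms and maps the zero section to the zero section. I would then define
\[
\Psi'\coloneqq\hat r\circ\Psi\circ\Phi^{-1},
\]
a composition of symplectomorphisms, and verify that it is a cotangent bundle structure for $Z'$ with respect to $\Phi(X)$: it carries $\Phi(X)$ to $X$ under $\Phi^{-1}$, then $X$ to the zero section of $T^\ast X$ under $\Psi$, and finally this to the zero section of $T^\ast\Phi(X)$ under $\hat r$.

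The decisive step is to identify $\Psi'(\Phi(\Lambda))$. Since $\Psi'\circ\Phi=\hat r\circ\Psi$ we have $\Psi'(\Phi(\Lambda))=\hat r(\Psi(\Lambda))=\hat r(\mathrm{im}\,\d\phi)$, and the defining property of the cotangent lift — precisely the identity $\hat r(\d\phi|_x)=\d(\phi\circ r^{-1})|_{r(x)}$ already exploited in the proof of \cref{prop:TouchContactifRequiv} — gives $\hat r(\mathrm{im}\,\d\phi)=\mathrm{im}\,\d(\phi\circ r^{-1})$. Hence $\phi\circ r^{-1}$ is a generating function of $(\Phi(X),\Phi(\Lambda),\Phi(z))$ for the structure $\Psi'$, and it vanishes at $\Phi(z)$ because $\phi(z)=0$. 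Reading $\phi$ in centred coordinates at $z$ and $\phi\circ r^{-1}$ in centred coordinates at $\Phi(z)$, the two germs differ exactly by precomposition with the diffeomorphism germ induced by $r$, so they are right equivalent.

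This establishes the lemma for the transported pair $(\phi,\phi\circ r^{-1})$. To reach arbitrary generating functions on both sides I would invoke \cref{cor:GenFunWellDefined}: any generating function of $(X,\Lambda,z)$ is stably right equivalent to $\phi$ and any generating function of $(\Phi(X),\Phi(\Lambda),\Phi(z))$ is stably right equivalent to $\phi\circ r^{-1}$, so chaining with the right equivalence $\phi\sim\phi\circ r^{-1}$ yields the asserted equivalence of any such pair. The hard part is not this chaining but the bookkeeping in the middle paragraph: confirming, with the duality and sign conventions fixed earlier, that $\hat r$ is symplectic, preserves the zero section, and realises the identity $\hat r(\d\phi|_x)=\d(\phi\circ r^{-1})|_{r(x)}$, so that $\Psi'$ is a bona fide cotangent bundle structure and graphicality of $\Phi(\Lambda)$ is automatic. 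Once these conventions are pinned down the right equivalence $\phi\sim\phi\circ r^{-1}$ is immediate.
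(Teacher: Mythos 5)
Your first three paragraphs are correct and are essentially the paper's own argument in different notation. The paper transports the cotangent bundle structure through $\Phi$ at the level of its defining $1$-form (\cref{rem:cobundleStructure1Form}): if $\lambda$ defines the structure on $Z$ over $X$, the transported structure on $Z'$ over $\Phi(X)$ is the one defined by $\lambda'=(\Phi^{-1})^\ast\lambda$, and the primitives of $\iota_\Lambda^\ast\lambda$ and $\iota_{\Phi(\Lambda)}^\ast\lambda'$ relate by composition with $(\Phi^{-1})|_{\Phi(\Lambda)}$, which in centred coordinates is exactly your right equivalence $\phi\sim\phi\circ r^{-1}$. Your $\Psi'=\hat r\circ\Psi\circ\Phi^{-1}$ induces precisely this $1$-form, because $\hat r$ preserves canonical $1$-forms (same lift convention as in the proof of \cref{prop:TouchContactifRequiv}); you merely realise the transported structure as an explicit symplectomorphism into $T^\ast\Phi(X)$ where the paper stays with $1$-forms and primitives on $\Lambda$.

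The final paragraph, however, does not deliver what it claims. Chaining the right equivalence $\phi\sim\phi\circ r^{-1}$ with the equivalences supplied by \cref{cor:GenFunWellDefined} produces only \emph{stable} right equivalence of an arbitrary pair of generating functions, since \cref{cor:GenFunWellDefined} gives nothing stronger; right equivalence is not recovered by such a concatenation. Moreover, this gap is not repairable in general: by the remark following \cref{prop:IndofCotStructure}, applied with $\Phi=\id$, a non-tangential problem admits two generating functions (with respect to different cotangent bundle structures) that are stably right equivalent but \emph{not} right equivalent, so the literal ``any-to-any'' right-equivalence reading of \cref{lem:ChangeCoords} is too strong to be provable. (Appealing to \cref{thm:CanellationLaw} for germs in $\dim X$ variables would formally close the gap, but that statement is reliable only for fully reduced germs, e.g.\ $x^2$ and $-x^2$ are stably right equivalent yet not right equivalent, and generating functions of non-tangential problems are not fully reduced.) What is true, and what the paper's proof and your first three paragraphs establish, is the correspondence version: each generating function of $(X,\Lambda,z)$ is right equivalent to the generating function of $(\Phi(X),\Phi(\Lambda),\Phi(z))$ computed in the $\Phi$-transported structure. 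That is also all that is used downstream (the reduction step in \cref{prop:CoEqvStabEqv}, where generating functions matter only up to stable right equivalence), so the correct course is to stop after your third paragraph rather than attempt the upgrade.
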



\begin{proof}
Let $\phi$ be a generating function for $(X,\Lambda,z)$. There exists a contangent bundle structure defined by a 1-form $\lambda$ such that $d\phi(X)=\Lambda$ (shrinking all involved manifolds, if necessary). The 1-form $\lambda' = (\Phi^{-1})^\ast \lambda$ defines a cotangent bundle structure on $Z'$ over $\Phi(X)$ around $\Phi(z)$.
Consider $\phi' \colon X' \to \R$ with $\phi'(\Phi(z))=0$ and $\d \phi'(\Phi(X)) = \Phi(\Lambda)$.
Since $\lambda' = (\Phi^{-1})^\ast \lambda$, $\Phi$ must coincide with the cotangent lift of the map $(\Phi|_X)^{-1} \colon \Phi(X) \to X$ \cite[\S 6.3]{Marsden1999}. By \cref{lem:CotLift} we have $\phi = \phi' \circ \Phi|_X$, whereas the constant vanishes due to the normalisation of $\phi$ and $\phi'$. An application of \cref{prop:IndofCotStructure} completes the proof.
\end{proof}

We can now extend \cref{prop:TouchContactifRequiv} to non-tangential Lagrangian contact problems.

\begin{theorem}\label{prop:CoEqvStabEqv}
Two Lagrangian contact problems $(X,\Lambda,z)$ in $Z$ and $(X',\Lambda',z')$ in $Z'$ with $\dim Z = \dim Z'$ are contact equivalent if and only if their generating functions are stably right equivalent.
\end{theorem}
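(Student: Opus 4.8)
The plan is to prove the two implications of the biconditional separately, and the observation underpinning both is that $n := \tfrac12 \dim Z = \tfrac12 \dim Z'$ is the common dimension of the Lagrangian bases $X$ and $X'$. Consequently every generating function of either problem is a germ $(\R^n,0)\to(\R,0)$ in the \emph{same} number of variables. This is exactly the hypothesis under which \cref{thm:CanellationLaw} upgrades stable right equivalence to honest right equivalence, and it is the reason the statement is restricted to $\dim Z = \dim Z'$.

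For the direction ``contact equivalent $\Rightarrow$ generating functions stably right equivalent'' I would argue as follows. A contact equivalence is, by \cref{def:ContEQ}, a symplectomorphism $\Phi$ from a neighbourhood of $z$ onto a neighbourhood of $z'$ carrying $(X,\Lambda,z)$ to $(X'\cap U',\Lambda'\cap U',z')$ after shrinking. By \cref{lem:ChangeCoords} any generating function of $(X,\Lambda,z)$ is right equivalent to any generating function of $(\Phi(X),\Phi(\Lambda),\Phi(z))=(X'\cap U',\Lambda'\cap U',z')$; since generating functions of a fixed problem are well defined up to stable right equivalence by \cref{cor:GenFunWellDefined}, the generating functions of the two problems are right equivalent, hence a fortiori stably right equivalent. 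This direction therefore costs essentially nothing beyond \cref{lem:ChangeCoords}.

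The substantive direction is ``stably right equivalent $\Rightarrow$ contact equivalent''. I would first fix, using \cref{rem:existenceCotangentBundleStructures}, cotangent bundle structures for which $\Lambda$ and $\Lambda'$ are graphical, and together with centred coordinate charts on $X$ and $X'$ identify neighbourhoods of $z$ and $z'$ symplectically with neighbourhoods of $0$ in $T^\ast\R^n$ via maps $\Psi,\Psi'$ with $\Psi(z)=0=\Psi'(z')$, with $\Psi(X),\Psi'(X')$ the zero section, and with $\Psi(\Lambda)=\d\phi(\R^n)$, $\Psi'(\Lambda')=\d\phi'(\R^n)$ for generating functions $\phi,\phi'\colon(\R^n,0)\to(\R,0)$. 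By \cref{cor:GenFunWellDefined} any generating function of the first problem is stably right equivalent to any generating function of the second, so $\phi$ and $\phi'$ are stably right equivalent; as they have the same number of variables, \cref{thm:CanellationLaw} produces a genuine right equivalence $r$ fixing $0$ with $\phi=\phi'\circ r$. Now I copy the construction from the ``if'' part of \cref{prop:TouchContactifRequiv}: the cotangent lift $R$ of $r$ is a symplectomorphism of $T^\ast\R^n$ fixing the zero section with $R(\d\phi'|_x)=\d\phi|_{r^{-1}(x)}$, whence $R(\Psi'(\Lambda'))=\Psi(\Lambda)$. Setting $\Phi := (\Psi')^{-1}\circ R^{-1}\circ\Psi$ yields a symplectomorphism with $\Phi(z)=z'$, $\Phi(X)=X'$ and $\Phi(\Lambda)=\Lambda'$, i.e.\ the desired contact equivalence.

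The crux, and the place where care is needed, is that \cref{prop:TouchContactifRequiv} cannot be quoted directly: the two problems live in different symplectic manifolds and their intersections need not be tangential. Both difficulties are absorbed by transporting everything to the common model $T^\ast\R^n$ through the structures $\Psi,\Psi'$, which is legitimate precisely because of the independence-of-structure result \cref{cor:GenFunWellDefined}; and the cotangent-lift identity $R(\d\phi'|_x)=\d(\phi'\circ r)|_{r^{-1}(x)}$ never used tangentiality, so it survives the passage to the non-tangential setting. The genuinely indispensable ingredient is \cref{thm:CanellationLaw}: without $\dim Z=\dim Z'$ we would be left with a stable right equivalence living on some $\R^{n+m}$ and could only cotangent-lift into $T^\ast\R^{n+m}$, whose dimension no longer matches $Z$. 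I expect the only routine-but-necessary bookkeeping to be the verification that the graphical structures can be chosen so that the resulting $\phi,\phi'$ are indeed representatives of the hypothesised stable-right-equivalence class.
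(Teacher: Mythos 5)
Your forward direction is correct and is essentially the paper's own argument (reduce via \cref{lem:ChangeCoords}, then invoke \cref{cor:GenFunWellDefined}). The reverse direction, however, has a genuine gap at precisely its crux: you apply \cref{thm:CanellationLaw} to the \emph{full} generating functions $\phi,\phi'$ to get a right equivalence $r$ with $\phi=\phi'\circ r$, and then cotangent-lift $r$. This step fails whenever the intersections are not tangential. Right equivalence preserves the signature of the Hessian at the critical point (if $\phi=\phi'\circ r$ then $\Hess\phi(0)=\D r(0)^{\top}\,\Hess\phi'(0)\,\D r(0)$), while stable right equivalence does not. Concretely, take $Z=Z'=T^\ast\R$, $X=X'$ the zero section, $\phi(x)=x^2$, $\phi'(x)=-x^2$: these germs are stably right equivalent (add $-u^2$ resp.\ $+v^2$ and swap variables), and the two contact problems are indeed contact equivalent (the symplectomorphism $(x,p)\mapsto(x-p,p)$ preserves the zero section and maps $\{p=2x\}$ to $\{p=-2x\}$), yet $\phi$ and $\phi'$ are \emph{not} right equivalent, so the $r$ your construction needs does not exist; moreover no cotangent lift fixing the zero section can carry $\d\phi'(\R)$ to $\d\phi(\R)$, since that would force $\phi=\phi'\circ r+\mathrm{const}$. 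The paper flags exactly this phenomenon in the remark following \cref{prop:IndofCotStructure}: generating functions of one and the same non-tangential problem can already fail to be right equivalent. (Read literally for arbitrary germs, \cref{thm:CanellationLaw} would contradict that remark; its valid scope, and the only way the paper ever uses it, is for germs with vanishing 2-jets, where no quadratic part carries a signature.) As written, your argument proves the statement only in the tangential case, i.e.\ it reproves \cref{prop:TouchContactifRequiv}.

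What your proposal is missing is the treatment of the nondegenerate quadratic parts, which is the actual content of the paper's proof. There one splits $\phi(\overline x,\underline x)=f(\overline x)+\underline x^{\top}B\,\underline x$ and $\phi'(\overline x',\underline x')=f'(\overline x')+\underline x'^{\top}B'\,\underline x'$ by the parametric Morse lemma (\cref{lem:ParametricMorse}), applies the cancellation law only to the fully reduced parts to obtain $f=f'\circ r$ (legitimate, since $f,f'$ have vanishing 2-jets and the same number of variables), and then builds the contact equivalence as a chain: cotangent lifts of the coordinate change, of $(\overline x,\underline x)\mapsto(r(\overline x),\underline x)$, and of $(\overline x,\underline x)\mapsto(\overline x,DB\,\underline x)$, followed by one final symplectomorphism that is \emph{not} a cotangent lift, namely the map identifying the cotangent bundle structure $\lambda$ with $\lambda+\d H$, where $H$ is quadratic in the fibre variables and built from $\frac14(-B^{-1}+DB'D)$; its effect on generating functions is controlled by \cref{lem:CompareCotStr} and \cref{lem:SwitchOn}. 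It is this fibre-destroying step that absorbs the possible signature mismatch between $B$ and $B'$ — the obstruction your cotangent-lift-only construction cannot overcome — and it is what distinguishes the non-tangential theorem from the Golubitsky--Guillemin setting.
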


\begin{proof}
In the following we may shrink $Z$, $Z'$ and all embedded manifolds to their intersections with open neighbourhoods of $z$ or $z'$ repeatedly without mentioning.
By \cref{lem:ChangeCoords}, any two generating functions of two contact equivalent contact problems are stably right equivalent.

To prove the reverse direction, it is sufficient to consider $Z'=Z$, $X'=X$ and $z'=z$. This is justified by the local nature of the problem and \cref{lem:ChangeCoords}.
%
%
Let $\phi$ be a generating function of $(X,\Lambda,z)$ and $\phi'$ of $(X,\Lambda',z)$ and assume that the function germs $\phi$ and $\phi'$ are stably right equivalent.
By \cref{prop:IndofCotStructure}, without loss of generality we may assume that $\phi$, $\phi'$ refer to the same cotangent bundle structure $\pi \colon Z \to X$.

By the parametric Morse Lemma (\cref{lem:ParametricMorse}) there exist coordinates $(\overline x, \underline x)=((x^1,\ldots,x^k),(x^{k+1},\ldots,x^n))$ and $(\overline {x'}, \underline {x'})=(({x'}^1,\ldots,{x'}^k),({x'}^{k+1},\ldots,{x'}^n))$ on $X$ centred at $z$, function germs $f$, $f'$ with vanishing 2-jets at 0 and invertible, symmetric matrices $B$ and $B'$ such that
\begin{align*}
\phi(\overline {x}, \underline {x})   &= f(\overline {x})  + {\underline {x}}^{\top} B \underline {x}\\
\phi'(\overline {x'}, \underline {x'}) &= f'(\overline {x'}) + {\underline {x'}}^{\top} B' \underline {x'}.
\end{align*}

Let $D$ be an invertible, symmetric, $n-k$-dimensional matrix such that the matrix $-B^{-1} + D B'D$ is invertible. 
Define the functions
\begin{align*}
\phi^{(0)}(\overline x,\underline x) &=f'(\overline {x}) + {\underline {x}}^{\top} B' \underline {x}\\
\phi^{(1)}(\overline x,\underline x) &= f(\overline {x})  + {\underline {x}}^{\top} B' \underline {x}\\
\phi^{(2)}(\overline x,\underline x) &=f(\overline {x})  + {\underline {x}}^{\top} BDB'DB \underline {x}.
\end{align*}
We now show that we can map $\Lambda' = \d \phi'(X)$ to $\d \phi^{(0)}(X)$, then to $\d \phi^{(1)}(X)$, then to $\d \phi^{(2)}(X)$, and finally to $\Lambda = \d \phi(X)$ with symplectic diffeomorphisms which fix $X$ and $z$.

\begin{itemize}
\item
The change of coordinates between $(\overline {x}, \underline {x})$ and $(\overline {x'}, \underline {x'})$ induces a diffeomorphism $\chi$ locally defined around $z \in X$ with $\chi(z)=z$.
The cotangent lift of $\chi$ fixes $z$ and provides a symplectomorphism $\Phi^{(0)}$ between $\Lambda'$ and $\d \phi^{(0)}(X)$ on an open neighbourhood of $z$ in $Z$. 

\item
Since $\phi$ and $\phi'$ are stably right equivalent, there exists a right equivalence $r$ such that $f = f'\circ r$ (\cref{thm:CanellationLaw}).
Define $\tilde r (\overline {x}, \underline {x}) = (r(\overline {x}), \underline {x})$ and denote the cotangent lift of $\tilde r$ by $\Phi^{(1)}$. Now $\Phi^{(1)}$ maps $\d \phi^{(0)}(X)$ to a manifold which coincides with $\d \phi^{(1)}(X)$ on an open neighbourhood of $z$: indeed, denote $\overline \pi(x) = \overline x$, $\underline \pi(x) = \underline x$. There exist appropriate choices of open neighbourhoods $V' \subset Z$ of $z$, $\tilde V \subset X$ of $z$, and $U,U' \subset \R^n$ of 0 such that 
\begin{align*}
\Phi^{(1)}(\Lambda'\cap V')
&=\Phi^{(1)}\left(\left\{ \d (f' \circ \overline \pi + \underline \pi^{\top}B' \underline \pi )|_{(\overline {x}, \underline {x})} \, | \, (\overline {x}, \underline {x}) \in U'\right\}\right)\\
&=\left(\left\{ \d (f \circ \overline \pi + \underline \pi^{\top} B' \underline \pi )|_{(r^{-1}(\overline {x}), \underline {x})} \, | \, (\overline {x}, \underline {x}) \in U'\right\}\right)\\
&=\left(\left\{ \d (f \circ \overline \pi + \underline \pi^{\top} B' \underline \pi )|_{(\overline {x}, \underline {x})} \, | \, (\overline {x}, \underline {x}) \in U\right\}\right)\\
&=\d \phi^{(1)}(X\cap \tilde V).
\end{align*}

\item
Denote the cotangent lift of the map $(\overline {x}, \underline {x}) \mapsto (\overline {x}, DB\underline {x})$ by $\Phi^{(2)}$. 
The symplectomorphism $\Phi^{(2)} \circ \Phi^{(1)}$ maps $\Lambda'$ to a manifold which coincides with $\d \phi^{(2)}(X)$ on a neighbourhood of $z$.

\item
It remains to show that there exists a symplectomorphism mapping $\d \phi^{(2)}(X)$ to $\Lambda$ which fixes $X$ and $z$.
Let $\lambda$ denote the canonical 1-form of the cotangent bundle structure for which $\pi \colon Z \to X$ is the projection. We define another cotangent bundle structure over $X$ by setting its canonical 1-form to $\lambda' = \lambda + \d H$ with
\[
H(x,\xi) 
= \xi^{\top}\begin{pmatrix}
0&0\\
0&\frac 14 (-B^{-1}+DB'D)
\end{pmatrix}\xi.
\]
The manifold $\Lambda$ is graphical with respect to the cotangent bundle structure defined by $\lambda'$ by the choice of $D$.
Let $\phi^{\lambda'}$ denote the generating function of $\Lambda$ with respect to the new structure $\lambda'$.
Applying \cref{lem:CompareCotStr} we get
\[
(\phi^{\lambda'} \circ k)(\overline {x},{\underline {x}}) =  f(\overline {x})  + {\underline {x}}^{\top} BDB'DB \underline {x}
\]
for a diffeomorphism $k$ on $X$. The cotangent lifted map $K$ of $k$ via the $\lambda$-structure maps $\d \phi^{\lambda'}|_x$ to $\d (\phi^{\lambda'} \circ k)|_{k^{-1}(x)} = \d (\phi^{(2)})|_{k^{-1}(x)}$ for each $x \in X$. Therefore,
\[
\d \phi^{\lambda'}(X) \quad \text{ and } \quad (K^{-1}\circ \Phi^{(2)} \circ \Phi^{(1)})(\Lambda')
\]
coincide near $z$. Locally around $z$ the symplectormorphism relating the cotangent bundle structures $\lambda$ and $\lambda'$ maps $\d \phi^{\lambda'}(X)$ to $\Lambda$ and fixes $X$ and $z$.
\end{itemize}

It follows that $(X,\Lambda,z)$ and $(X,\Lambda',z)$ are contact-equivalent.
\end{proof}

\begin{corollary}
The local algebras\footnote{See \cite{Arnold2012,lu1976singularity,Wassermann1974} for a definition and their significance for catastrophe theory.} of generating functions of contact equivalent Lagrangian contact problems are isomorphic.
\end{corollary}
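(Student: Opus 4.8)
The plan is to reduce the claim to the classical fact that the local (Jacobian) algebra is an invariant of stably right equivalence and then to invoke \cref{prop:CoEqvStabEqv}. Recall that the local algebra of a germ $f\colon(\R^n,0)\to(\R,0)$ with a critical point at $0$ is the quotient ring
\[
A_f = \mathcal E_n \big/ J_f,
\qquad
J_f = \left(\frac{\p f}{\p x_1},\ldots,\frac{\p f}{\p x_n}\right),
\]
where $\mathcal E_n$ is the ring of germs at $0$ of smooth functions $\R^n\to\R$ and $J_f$ is the Jacobian (gradient) ideal. Since a generating function of a Lagrangian contact problem has a critical point at the point of contact (the contact point lies in $\Lambda\cap X$, so $\nabla\phi$ vanishes there), $A_\phi$ is well defined. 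By \cref{prop:CoEqvStabEqv}, contact equivalent Lagrangian contact problems possess stably right equivalent generating functions, so it suffices to show that $A_f$ depends, up to ring isomorphism, only on the stably right equivalence class of $f$.

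First I would check invariance under right equivalence. If $f = g\circ h$ for a diffeomorphism germ $h$ fixing $0$, then the pullback $h^\ast\colon \mathcal E_n\to\mathcal E_n$, $u\mapsto u\circ h$, is a ring automorphism. By the chain rule,
\[
\frac{\p f}{\p x_i} = \sum_{j} h^\ast\!\left(\frac{\p g}{\p y_j}\right)\frac{\p h_j}{\p x_i},
\]
and since the Jacobian matrix $(\p h_j/\p x_i)$ is invertible over $\mathcal E_n$, the families $\{\p f/\p x_i\}_i$ and $\{h^\ast(\p g/\p y_j)\}_j$ generate the same ideal. Hence $h^\ast(J_g)=J_f$, and $h^\ast$ descends to an isomorphism $A_g\to A_f$.

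Next I would check invariance under stabilisation. Let $P(x,u)=f(x)+Q(u)$ with $Q$ a nondegenerate quadratic form in $m$ new variables $u$. After a linear coordinate change we may take $Q(u)=\sum_j \e_j u_j^2$ with $\e_j\in\{\pm1\}$, so that $\p P/\p u_j = 2\e_j u_j$, and the Jacobian ideal of $P$ in $\mathcal E_{n+m}$ equals $J_f\,\mathcal E_{n+m} + (u_1,\ldots,u_m)$. Evaluation at $u=0$ identifies $\mathcal E_{n+m}/(u_1,\ldots,u_m)$ with $\mathcal E_n$ and carries $J_f\,\mathcal E_{n+m}$ onto $J_f$, whence $A_P\cong A_f$. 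Combining the two steps, if $f$ and $g$ are stably right equivalent then by \cref{def:stablyrequiv} there exist nondegenerate quadratic forms such that $f+Q_F$ and $g+Q_G$ are right equivalent, so
\[
A_f \cong A_{f+Q_F} \cong A_{g+Q_G} \cong A_g .
\]
Applying this to the stably right equivalent generating functions of two contact equivalent Lagrangian contact problems yields the desired isomorphism of their local algebras.

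I do not expect a genuine obstacle here: both invariance steps are classical and purely algebraic. The only points requiring care are the invertibility of the diffeomorphism's Jacobian in the right-equivalence step, which guarantees that the two generating sets span the same ideal, and the verification in the stabilisation step that the quadratic part contributes exactly the ideal of the new variables, so that quotienting by it recovers the original local algebra.
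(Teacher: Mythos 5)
Your proposal is correct and follows exactly the route the paper intends: the corollary is stated as an immediate consequence of \cref{prop:CoEqvStabEqv} together with the classical fact that the local (Jacobian) algebra $\mathcal E_n/J_f$ is invariant under right equivalence and under stabilisation by nondegenerate quadratic forms, which you verify in the standard way (pullback by the diffeomorphism germ, and quotienting out the ideal $(u_1,\ldots,u_m)$ of the new variables). The paper leaves these two classical invariance steps to the cited literature, whereas you prove them directly; there is no substantive difference in approach.
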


Rather than using $X$ as a zero section for a cotangent bundle structure to describe the Lagrangian contact problem $(X,\Lambda,z)$ in $Z$ we can use any other Lagrangian submanifold $L \subset Z$ as a reference manifold as explained in the following proposition. This gives us some flexibility when computing generating functions which we will exploit in a parameter-dependent setting.

\begin{proposition}\label{lem:OtherRefMfld}
Let $(X,\Lambda,z)$ be a Lagrangian contact problem in the symplectic manifold $Z=T^\ast L$. Assume that $X$ and $\Lambda$ are graphical and given as the images of $\d \phi^X$ and $\d \phi^\Lambda$, for $\phi^X,\phi^\Lambda \colon L \to \R$ with $\phi^X(\pi(z))=\phi^\Lambda(\pi(z))$, where $\pi \colon Z \to L$ is the bundle projection.
The function
\begin{equation}\label{eq:MapSProp}\phi = \phi^\Lambda - \phi^X\end{equation}
expressed in local coordinates on $L$ centred at $\pi(z)$ is stably right equivalent to any generating function of $(X,\Lambda,z)$.
\end{proposition}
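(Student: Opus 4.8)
The plan is to straighten $X$ onto the zero section of $Z = T^\ast L$ by an explicit fibre-preserving symplectomorphism and then to invoke the invariance results for generating functions established above.

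First I would introduce the fibre translation $\tau \colon T^\ast L \to T^\ast L$, $\tau(q,p) = (q, p - \d \phi^X|_q)$. Writing $\lambda$ for the canonical $1$-form and $\omega = \d \lambda$ for the symplectic form, one has $\tau^\ast \lambda = \lambda - \pi^\ast \d \phi^X$, so $\tau^\ast \omega = \omega - \pi^\ast \d\d \phi^X = \omega$ because $\d \phi^X$ is closed; thus $\tau$ is a symplectomorphism of $T^\ast L$. A direct computation then gives $\tau(X) = \tau(\d \phi^X(L)) = L$ (the zero section), $\tau(\Lambda) = \tau(\d \phi^\Lambda(L)) = \d \phi(L)$ with $\phi = \phi^\Lambda - \phi^X$ as in \eqref{eq:MapSProp}, and $\tau(z) = (\pi(z), 0) \eqqcolon z_0$, the zero covector over $\pi(z)$. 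Since $\tau$ is a diffeomorphism it preserves isolatedness of the intersection, so $(L, \d \phi(L), z_0)$ is again a Lagrangian contact problem in $T^\ast L$.

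Next I would read off a generating function of $(L, \d \phi(L), z_0)$ directly from the tautological cotangent bundle structure $\id \colon T^\ast L \to T^\ast L$, for which the reference Lagrangian $L$ is the zero section. With respect to this structure $\d \phi(L)$ is graphical, being the image of the section $\d \phi$; moreover $\phi(\pi(z)) = \phi^\Lambda(\pi(z)) - \phi^X(\pi(z)) = 0$ by hypothesis, and $z \in X \cap \Lambda$ forces $\d \phi^X|_{\pi(z)} = \d \phi^\Lambda|_{\pi(z)}$, i.e.\ $\d \phi|_{\pi(z)} = 0$. Hence the coordinate expression of $\phi$ in local coordinates on $L$ centred at $\pi(z)$ is, by definition, a generating function of $(L, \d \phi(L), z_0)$.

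Finally I would apply \cref{lem:ChangeCoords} to the symplectomorphism $\Phi = \tau$: any generating function of $(X, \Lambda, z)$ is right equivalent to any generating function of $(\tau(X), \tau(\Lambda), \tau(z)) = (L, \d \phi(L), z_0)$, and in particular to $\phi$. Right equivalence implies stably right equivalence, which is the assertion, and together with \cref{cor:GenFunWellDefined} this determines $\phi$ up to stably right equivalence among all generating functions of $(X,\Lambda,z)$. I expect the only delicate points to be fixing the sign in the fibre translation so that $X$ (and not $\Lambda$) lands on the zero section, confirming that $\tau$ is genuinely symplectic, and checking that the straightened intersection remains isolated; the substantive work — independence of the generating function from the auxiliary cotangent bundle structure — is already supplied by \cref{cor:GenFunWellDefined} and \cref{lem:ChangeCoords}.
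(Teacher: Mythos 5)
Your proof is correct and follows essentially the same route as the paper: your fibre translation $\tau$ is literally the map $\chi(z)=z-\d \phi^X|_{\pi(z)}$ used in the paper's proof, and both arguments ultimately rest on \cref{cor:GenFunWellDefined}. The only organisational difference is that you push the contact problem forward to $(L,\d\phi(L),z_0)$ and cite \cref{lem:ChangeCoords}, whereas the paper keeps the problem $(X,\Lambda,z)$ fixed and pulls back the canonical $1$-form, $\lambda'=\chi^\ast\lambda$, obtaining a new cotangent bundle structure over $X$ (via \cref{rem:cobundleStructure1Form}) in which the generating function of $\Lambda$ is computed directly to coincide with $\phi$ --- i.e.\ the paper inlines, for this particular $\Phi$, the computation that \cref{lem:ChangeCoords} packages for you.

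One caution: your intermediate claim that any generating function of $(X,\Lambda,z)$ is \emph{right} equivalent to $\phi$ is stronger than what is true in general. \cref{lem:ChangeCoords} should be read as pairing each cotangent bundle structure over $X$ with its push-forward structure over $\Phi(X)$; it cannot literally mean ``any is right equivalent to any'', since taking $\Phi=\mathrm{id}$ would then contradict the remark following \cref{prop:IndofCotStructure}, which exhibits, for non-tangential intersections, two structures whose generating functions are stably right equivalent but \emph{not} right equivalent. Under this corrected reading your argument still closes, exactly as you indicate at the end: $\tau$ transports any generating function of $(X,\Lambda,z)$ to a right-equivalent generating function of $(L,\d\phi(L),z_0)$, and \cref{cor:GenFunWellDefined} then upgrades this to stable right equivalence against $\phi$, which is all the proposition asserts.
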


\begin{proof}
Denote the canonical 1-form on $Z=T^\ast L$ by $\lambda$ and the cotangent bundle projection by $\pi \colon Z \to L$. Consider the fibre preserving diffeomorphism
\[
\chi \colon Z \to Z, \quad z \mapsto z - \d \phi^X|_{\pi(z)}.
\]
We have
\[
\lambda' \coloneqq \chi^\ast \lambda = \lambda - \d (\phi^X \circ \pi).
\]
The 1-form $\lambda'$ on $Z$ is a primitive of the symplectic structure $\d \lambda$ and vanishes exactly at the points on $X$. By \cref{rem:cobundleStructure1Form}, $\lambda'$ corresponds to a cotangent bundle structure on $Z$ over $X$ with the same Lagrangian fibres as the original structure. Let $\pi' \colon Z \to X$ denote the cotangent bundle projection.
In the following we may localise the problem by intersecting $L$ with open neighbourhoods of $\pi(z)$ in $L$ and shrink $Z = T^\ast L$ and all embedded submanifolds accordingly repeatedly without mentioning. 
A generating function for $(X,\Lambda,z)$ with respect to the structure $\lambda'$ can be obtained as follows: let $S' \colon \Lambda \to \R$ be a primitive of the closed 1-form $\iota^\ast_\Lambda \lambda'$ around $z$ with $S'(z)=0$, where $\iota_\Lambda \colon \Lambda \hookrightarrow Z$ is the inclusion.
A generating function for $(X,\Lambda,z)$ with respect to $\lambda'$ is given as $\phi' = S' \circ (\pi'\circ \iota_\Lambda)^{-1} \colon X \to \R$. An expression of $\phi'$ in local coordinates on $X$ centred at $z$ is, therefore, right equivalent to an expression of $S'$ in local coordinates on $\Lambda$ centred at $z$.

To show that a coordinate expression of $\phi$ centred at $\pi(z)\in L$ and a coordinate expression of $\phi'$ centred at $z$ are right equivalent, it suffices to verify that the pullback of $\d S'$ to $L$ via $\d \phi^\Lambda \colon L \to \Lambda$ coincides with $\d \phi$. Here and in the following calculation we neglect to differentiate between $\d \phi^\Lambda \colon L \to \Lambda$ and $\d \phi^\Lambda \colon L \to Z$. Indeed, 
\begin{align*}
(\d \phi^\Lambda)^\ast (\d S')
&= (\d \phi^\Lambda)^\ast (\iota^\ast_\Lambda \lambda')
= (\d \phi^\Lambda)^\ast ( \chi^\ast \lambda)
= (\chi \circ \d \phi^\Lambda)^\ast \lambda\\
&= (\d \phi^\Lambda - \d \phi^X|_{\pi \circ \d \phi^\Lambda})^\ast \lambda
= (\d \phi^\Lambda - \d \phi^X)^\ast \lambda
= \d \phi^\ast \lambda
=\d \phi.
\end{align*}

Since generating functions are well-defined up to stably right equivalence by \cref{cor:GenFunWellDefined}, this shows that $\phi$ expressed in local coordinates centred at $\pi(z)$ is stably right equivalent to any generating function of $(X,\Lambda,z)$.
\end{proof}

\section{Parameter dependent Lagrangian contact problems}\label{sec:ParamLCP}

Let us extend our analysis of Lagrangian contact problems to parameter dependent problems. This will help us to deepen the relation of Lagrangian contact problems with catastrophe theory.

A parameter dependent version of \cref{lem:SwitchOn} holds true.

\begin{lemma}\label{lem:SwitchOnParameter}
On $\R^n = \overline X \oplus \underline X$ consider coordinates $x=(\overline x, \underline x)$ with $\overline x = (x^1,\ldots,x^k)$ and $\underline x =(x^{k+1},\ldots,x^n)$, a smooth family of nondegenerate symmetric matrices $Q_\mu \in \R^{(n-k) \times (n-k)}$, a smooth family of functions $g_\mu$ defined on an open neighbourhood of $0 \in \R^k$ such that the 2-jet of $g_0$ vanishes at 0. Here $\mu \in I$ is the family parameter and $I$ is an open neighbourhood of $0\in \R^l$.
Moreover, consider a smooth family of matrix valued functions $\mathcal H_\mu \colon \R^n \to \mathrm{Sym}(n)$ with 
\[
\mathcal H_\mu(x) = (h_\mu^{ij}(x))_{i,j=1,\ldots,n}
=
\begin{pmatrix}
H^{11}_\mu(x) & H_\mu^{12}(x) \\
{H_\mu^{12}}^{\top}(x)&0
\end{pmatrix} \in \mathrm{Sym}(n)  \subset \R^{n \times n}.
\]
for $H^{11}_\mu(x)\in \mathrm{Sym}(k)$ and $H_\mu^{12}(x) \in \R^{k \times (n-k)}$. For $t \in \R$ let
\begin{align*}
\psi(\mu,x)    & = g_\mu(\overline x) + \underline x^{\top} Q_\mu \underline x \\
\psi_{t} (\mu,x) & = g_\mu(\overline x) + \underline x^{\top} Q_\mu \underline x + t  \nabla \psi(\mu,x)^{\top} \mathcal H_\mu(x) \nabla \psi(\mu,x).
\end{align*}
Then $\psi_t$ is right equivalent to $\phi$ around $(\mu,x)=(0,0)$. The right equivalence is fibred, i.e.\ of the form $(\mu,x)\mapsto (\mu,r_\mu(x))$, and fixes $(\mu,x)=(0,0)$.
\end{lemma}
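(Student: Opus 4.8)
The plan is to mimic the proof of \cref{lem:SwitchOn} verbatim, carrying the parameter $\mu$ along as an inert spectator that only restricts which right equivalences we are allowed to use (namely fibred ones). First I would fix the notation $h^{ij}_\mu$ for the entries of $\mathcal H_\mu$ and define the family of matrices $B(t,\mu,x)\in\R^{n\times n}$ by
\[
B_{lj}(t,\mu,x) = \delta_{lj}+ t \sum_{i} \left(\frac{\p h^{ij}_\mu}{\p x_l}(x)\,\frac{\p \psi}{\p x_l}(\mu,x) + 2\, h^{ij}_\mu(x)\, \frac{\p^2 \psi}{\p x_i \p x_l}(\mu,x)\right),
\]
differentiating only in the $x$-variables. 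Exactly as in \eqref{eq:BlocallyInvertible}, the point is that the specific block structure of $\mathcal H_\mu$ (vanishing lower-right block) together with $\nabla\psi(\mu,x)=\bigl(\nabla_{\overline x} g_\mu,\,2Q_\mu\underline x\bigr)$ forces $B(t,\mu,0)$ to be lower-block-triangular with identity blocks on the diagonal, hence invertible for all $t$ and all $\mu$ near $0$; by continuity $B$ remains invertible on a whole neighbourhood of $(\mu,x)=(0,0)$, uniformly in $t\in[0,1]$. The chain-rule computation of \eqref{eq:relateNablaPsis} goes through unchanged for each fixed $\mu$, giving $\nabla_x \psi_t(\mu,x) = B(t,\mu,x)\nabla_x\psi(\mu,x)$.

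Next I would set up the homotopy method fibrewise. Define the time-dependent vector field
\[
\omega(\mu,x,t) = -\,B(t,\mu,x)^{-\top}\,\mathcal H_\mu(x)\,\nabla_x\psi(\mu,x),
\]
so that, exactly as in \eqref{eq:ddtpsit}, one gets $\tfrac{\d}{\d t}\psi_t = -\langle \nabla_x\psi_t,\omega\rangle$. The crucial structural observation is that $\omega$ involves \emph{no} differentiation in $\mu$ and that $\omega(\mu,0,t)=0$, since $\nabla_x\psi(\mu,0)=0$ for every $\mu$ (both $\nabla_{\overline x}g_\mu(0)$ and $2Q_\mu\underline x|_{\underline x=0}$ vanish). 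I would then solve the \emph{parametrised} initial value problem
\[
\frac{\d}{\d t} f_t(\mu,x) = \omega(f_t(\mu,x),\mu,t), \quad f_0(\mu,x)=x,
\]
treating $\mu$ as a parameter of the ODE; smooth dependence on initial data and parameters yields a smooth family $f_t$, and $\omega(\mu,0,t)=0$ gives $f_t(\mu,0)=0$ for all $\mu,t$. Setting $r_\mu = f_1(\mu,\cdot)$, the map $(\mu,x)\mapsto(\mu,r_\mu(x))$ is fibred, fixes $(0,0)$, and the same computation $\tfrac{\d}{\d t}(\psi_t\circ f_t)=0$ from the proof of \cref{lem:SwitchOn} shows $\psi_t\circ f_t=\psi$ for all $t$, so $\psi_1=\psi_t|_{t=1}$ is right equivalent to $\psi$ via a fibred right equivalence.

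The only genuinely new point over \cref{lem:SwitchOn} is bookkeeping: I must check that every object ($B$, $B^{-1}$, $\omega$, $f_t$) depends smoothly and jointly on $(\mu,x,t)$ and that the right equivalence produced is fibred rather than an arbitrary diffeomorphism of $\R^{n+l}$. This is automatic because $\omega$ has no $\tfrac{\p}{\p\mu}$-component, so the flow never moves the $\mu$-coordinate; the diffeomorphism is genuinely of the claimed form $(\mu,x)\mapsto(\mu,r_\mu(x))$. I expect the main (and essentially only) obstacle to be verifying the invertibility of $B(t,\mu,x)$ near the origin uniformly in $t$ and $\mu$; this is where the hypotheses that $Q_\mu$ is nondegenerate and that the lower-right block of $\mathcal H_\mu$ vanishes are used, exactly as in the non-parametric case. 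Everything else is the non-parametric argument applied fibrewise, with standard smooth dependence of ODE flows on parameters doing the work of keeping the construction smooth in $\mu$.
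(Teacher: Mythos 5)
Your overall strategy---running the homotopy method of \cref{lem:SwitchOn} fibrewise in $x$ with $\mu$ as an inert parameter---is exactly the paper's approach, but your execution misreads the hypotheses at precisely the two points where the parametric case needs care. The lemma assumes only that the 2-jet of $g_0$ vanishes at $0$; for $\mu \neq 0$ neither $\nabla_{\overline x} g_\mu(0)$ nor $\Hess g_\mu(0)$ need vanish. Consequently your claim that $\nabla_x\psi(\mu,0) = \bigl(\nabla_{\overline x}g_\mu(0),\, 0\bigr) = 0$ for every $\mu$ is false, and with it two of your assertions fail: (a) $B(t,\mu,0)$ is in general \emph{not} lower-block-triangular with identity diagonal blocks for $\mu\neq 0$, since the Hessian contributions $\Hess g_\mu(0)\, H^{11}_\mu(0)$ and $\Hess g_\mu(0)\, H^{12}_\mu(0)$ as well as the gradient terms involving $\nabla_{\overline x}g_\mu(0)$ survive at $x=0$; and (b) $\omega(\mu,0,t)$ does not vanish for $\mu\neq 0$, so your conclusion $f_t(\mu,0)=0$ for all $\mu$ and $t$ is also false in general.

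Both failures are repairable, and the repairs are exactly the two modifications the paper itself singles out. For (a): the block-triangular computation of \eqref{eq:BlocallyInvertible} is valid at $\mu=0$, where it gives invertibility of $B(t,0,0)$ for all $t$; by compactness of $[0,1]$ and continuity, $B(t,\mu,x)$ is then invertible for all $t\in[0,1]$ and all $(\mu,x)$ in a neighbourhood of $(0,0)$. Your continuity sentence can do this work, but it must propagate invertibility from the single point $(\mu,x)=(0,0)$, not from a false structural statement about all $\mu$ near $0$. For (b): nothing forces $f_t(\mu,0)=0$ for $\mu\neq 0$, and nothing requires it---the lemma only demands that the fibred equivalence fix $(\mu,x)=(0,0)$, and $\omega(0,0,t)=0$ (which is true, since the 2-jet of $g_0$ vanishes) yields $f_t(0,0)=0$, which is all that is needed. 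As a minor point, in your formula for $B_{lj}$ the factor $\p \psi/\p x_l$ inside the sum over $i$ should read $\p \psi/\p x_i$.
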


\begin{proof}
The proof is almost analogous to the proof of \cref{lem:SwitchOn} with parameter-dependent data, i.e.\ $\psi(x)$ is substituted by $\psi(\mu,x)$, $\psi_t(x)$ by $\psi_t(\mu,x)$, $\mathcal H$ by $\mathcal H_\mu$, $H^{ij}$ by $H^{ij}_\mu$, $h^{ij}$ by $h^{ij}_\mu$, $g$ by $g_\mu$, $Q$ by $Q_\mu$, $B$ by $B_\mu$, $\omega$ by $\omega_\mu$, and $f_t$ by $f_{\mu,t}$. Rather than repeating the calculations and arguments with the indicated substitutions, we point out the two adaptions that need to be made in a copy of the proof of \cref{lem:SwitchOn} with the indicated substitutions.
\begin{itemize}
\item
The 2-jet of $g_\mu$ does not necessarily vanish at $\overline x = 0$ unless $\mu=0$.
However, this is not an issue: in \eqref{eq:BlocallyInvertible} the invertibility of the matrix $B_\mu(t,0)$, which relates $\nabla_x \psi_t(\mu,0)$ and $\nabla_x \psi(\mu,0)$ follows from the invertibility of $B_0(t,0)$ for all $\mu$ close to 0 and $t \in [0,1]$, which is sufficient for the argument.
\item
Recall that $\omega_\mu(x,t)$ from \eqref{eq:ddtpsit} is the vector field whose flow map is the required right equivalence. 
Notice that $\omega_\mu(0,t)$ from \eqref{eq:ddtpsit} does not necessarily vanish if $\mu \not = 0$ and, therefore, the values $f_{\mu,t}(0)$ are only guaranteed to vanish if $\mu=0$. However, this is sufficient as the sought right equivalence needs to fix $x=0$ only at $\mu=0$. \qedhere
\end{itemize}
\end{proof}

%
%

\begin{definition}[smooth Lagrangian family]\label{def:smoothLagrangianfamily}
Let $I \subset \R^l$ be an open neighbourhood of the origin. A family $(\Lambda_\mu)_{\mu \in I}$ of Lagrangian submanifolds of a symplectic manifold $Z$ is {\em smooth} at $\mu =0$ around $z \in \Lambda_0$ if there exists an open neighbourhood  $\tilde Z$ of $z$, a cotangent bundle structure $\pi \colon \tilde Z \to \Lambda_0\cap \tilde Z$, an open neighbourhood $\tilde I \subset I$ of 0 and a smooth family of scalar valued functions $(\phi_\mu)_{\tilde I}$ such that $\Lambda_\mu \cap \tilde Z$ is the image of the section $\d \phi_\mu \colon \Lambda_0 \to \tilde Z$ for all $\mu \in \tilde I$.
\end{definition}

\Cref{def:smoothLagrangianfamily} is independent of the cotangent bundle structure $\pi \colon \tilde Z \to \Lambda_0\cap \tilde Z$ because two different structures relate by a smooth transition (\cref{lem:CompareCotStr}).

\begin{definition}[parameter-dependent Lagrangian contact problem]
Let $(X_\mu)_{\mu \in I}$ and $(\Lambda_\mu)_{\mu \in I}$ be two Lagrangian families in a symplectic manifold $Z$ that are smooth at $\mu=0$ such that $X_0 \cap \Lambda_0$ intersects in an isolated point $z_0$ and such that the set $X_\mu \cap \Lambda_\mu$ is discrete for all $\mu \in I$. Then $((X_\mu)_{\mu \in I},(\Lambda_\mu)_{\mu \in I},z_0)$ is called a {\em (parameter-dependent) Lagrangian contact problem in $Z$}.
\end{definition}

\begin{definition}[constant unfolding]
Let $\mathcal L = ((X_\mu)_{\mu \in I},(\Lambda_\mu)_{\mu \in I},z_0)$ be a Lagrangian contact problem. Let $\tilde I \subset \R^{\tilde l}$ be an open neighbourhood of 0. The Lagrangian contact problem $((X_\mu)_{(\mu, \tilde \mu) \in I \times \tilde I},(\Lambda_\mu)_{(\mu, \tilde \mu) \in I \times \tilde I},z_0)$ is called a {\em constant unfolding of $\mathcal L$}. Similarly, if $\mathcal F = (\phi_\mu)_{\mu \in I}$ is a smooth family of scalar-valued functions then $(\phi_\mu)_{(\mu, \tilde \mu) \in I \times \tilde I}$ is called a {\em constant unfolding of $\mathcal F$}.
\end{definition}


\begin{definition}[Morse-reduced form]\label{def:MorseReducedForm}
Consider an open neighbourhood $I$ of $0 \in \R^l$ and a family of scalar-valued functions $(\phi_\mu)_{\mu \in I}$ defined around $z_0 \in X$ with $\phi_0(z_0)=0$ and $\d \phi_0|_{z_0}=0$, where $X$ is an $n$-dimensional manifold.
Consider coordinates $(\overline x, \underline x) = ((x^1,\ldots,x^k),(x^{k+1},\ldots,x^n))$ centred at 0 such that $\phi_\mu$ is of the form
\[ \phi_\mu(x)=f_\mu(\overline x)+ \underline x^{\top} B \underline x \]
for a symmetric, nondegenerate matrix $B$ and a smooth family of functions $(f_\mu)_{\mu\in I}$ such that $\nabla_{\overline x} f_0(0)=0$ and $\mathrm{Hess}\, f_0(0)=0$.
Then $(f_\mu)_{\mu\in I}$ is a {\em Morse-reduced form} of $(\phi_\mu)_{\mu \in I}$.
\end{definition}

\begin{remark}
A potential parameter-dependence of the coordinates $x=(\overline x, \underline x)$ is suppressed in our notation.
\end{remark}

\begin{lemma}[existence and uniqueness of Morse-reduced forms]\label{lem:MorseRedForm}
A family $(\phi_\mu)_{\mu \in I}$ as in \cref{def:MorseReducedForm} has a Morse-reduced form $(f_\mu)_{\mu\in I}$ and $(f_\mu)_{\mu\in I}$ is locally around $(\mu,x)=(0,0)$ determined up to a right-action by a diffeomorphism of the form $K(\mu,\overline x) = (\mu, r_\mu(\overline x))$ with $K(0,0)=0$ and addition of a term $\chi(\mu)$, where $\chi$ is smooth and $\chi(0)=0$.
\end{lemma}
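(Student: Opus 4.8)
The plan is to handle existence and uniqueness separately, treating the parameter $\mu$ as an inert spectator throughout, so that the analytic content reduces to the implicit function theorem together with a fibrewise splitting. For existence, I would first use that $x=0$ is critical for $\phi_0$ and set $k=\dim\ker\Hess\phi_0(0)$, choosing the splitting $\R^n=\overline X\oplus\underline X$ so that $\overline X=\ker\Hess\phi_0(0)$ while $\Hess\phi_0(0)$ is nondegenerate on $\underline X$. The map $(\mu,\overline x,\underline x)\mapsto\nabla_{\underline x}\phi_\mu(\overline x,\underline x)$ vanishes at the origin, and its $\underline x$-derivative there is the $\underline X$-block of $\Hess\phi_0(0)$, which is invertible; hence the implicit function theorem yields a smooth family $u_\mu(\overline x)$ with $u_0(0)=0$ solving $\nabla_{\underline x}\phi_\mu(\overline x,u_\mu(\overline x))=0$. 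The fibred change of coordinates $(\overline x,\underline x)\mapsto(\overline x,\underline x-u_\mu(\overline x))$ moves this fibre-critical section to $\{\underline x=0\}$, so that $\nabla_{\underline x}\phi_\mu(\overline x,0)\equiv 0$, and a fibre Taylor expansion gives $\phi_\mu(\overline x,\underline x)=f_\mu(\overline x)+\underline x^{\top}C_\mu(\overline x)\underline x$ with $f_\mu(\overline x):=\phi_\mu(\overline x,0)$ and a smooth symmetric family $C_\mu(\overline x)$ for which $C_0(0)$ is nondegenerate.

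It then remains to make the quadratic part constant. Since $C_0(0)$ is nondegenerate, the signature of $C_\mu(\overline x)$ is constant for $(\mu,\overline x)$ near the origin, so by the parametric Sylvester argument already used in the proof of \cref{prop:IndofCotStructure} (cf.\ \eqref{eq:SylvesterBprime}) there is a smooth family $A_\mu(\overline x)$ with $A_\mu(\overline x)^{\top}C_\mu(\overline x)A_\mu(\overline x)=B:=C_0(0)$. The fibred substitution replacing $\underline x$ by $A_\mu(\overline x)^{-1}\underline x$ fixes $\{\underline x=0\}$, hence leaves $f_\mu$ untouched, and its Jacobian is block-triangular with invertible diagonal blocks because the off-diagonal block is proportional to $\underline x$ and vanishes on the section; thus it is a local diffeomorphism, exactly as for the map $r$ in \cref{prop:IndofCotStructure}. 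This yields the normal form $\phi_\mu=f_\mu(\overline x)+\underline x^{\top}B\underline x$. Finally $\nabla_{\overline x}f_0(0)=0$ since $0$ is critical for $\phi_0$, and $\Hess f_0(0)=0$ because the corank of $\Hess\phi_0(0)$ is the coordinate-invariant integer $k=\dim\overline X$ while the block $2B$ already exhausts the rank $n-k$.

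For uniqueness, suppose $(f_\mu)$ and $(\tilde f_\mu)$ are Morse-reduced forms in coordinates $(\overline x,\underline x)$ and $(\overline y,\underline y)$. The two reductions $F_\mu=f_\mu+\underline x^{\top}B\underline x$ and $\tilde F_\mu=\tilde f_\mu+\underline y^{\top}\tilde B\underline y$ are both coordinate expressions of the single family $\phi_\mu$, so they are related by a fibred change of coordinates $(\mu,x)\mapsto(\mu,\Theta_\mu(x))$ with $\Theta_0(0)=0$. Because the corank $k$ and the Morse index of the complementary Hessian block are invariants of $\phi_0$, the numbers of variables of $f_\mu$ and $\tilde f_\mu$ agree and $B,\tilde B$ share the same signature. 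Hence $(f_\mu)$ and $(\tilde f_\mu)$ are stably right equivalent as families through the nondegenerate forms $\underline x^{\top}B\underline x$ and $\underline y^{\top}\tilde B\underline y$; applying the fibred cancellation of \cite{Weinstein1971} (the parameter-dependent counterpart of \cref{thm:CanellationLaw}) removes these forms and produces a fibred right equivalence $K(\mu,\overline x)=(\mu,r_\mu(\overline x))$ with $K(0,0)=0$. The additive term $\chi(\mu)$, with $\chi(0)=0$, absorbs the discrepancy between the base values $f_\mu(0)$ and $\tilde f_\mu(0)$, which are pinned to $0$ only at $\mu=0$.

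The main obstacle is this uniqueness step, specifically the fibred cancellation: stripping off the nondegenerate quadratic summands while keeping the equivalence fibred over $\mu$ and simultaneously tracking the parameter-dependent constant $\chi(\mu)$ is exactly where the plain cancellation law \cref{thm:CanellationLaw} is insufficient and one must lean on the fibred uniqueness of Weinstein. A secondary but genuine technical point is to verify smoothness and the block-triangular invertibility of all the parameter-dependent coordinate changes \emph{uniformly} near $(\mu,x)=(0,0)$ rather than only at $\mu=0$, mirroring the two modifications already flagged in the proof of \cref{lem:SwitchOnParameter}.
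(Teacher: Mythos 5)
Your proposal is correct, and it reaches the statement by a partly different route than the paper. For existence, the paper does not re-derive the splitting from scratch: it sets $\chi(\mu)=\langle \nabla_\mu \phi_\mu(0)|_{\mu=0},\mu\rangle$, observes that $(\mu,x)\mapsto \phi_\mu(x)-\chi(\mu)-\phi_0(0)$ has vanishing 1-jet at the origin, and then applies \cref{lem:ParametricMorse} \emph{once}, with respect to the splitting $(I\oplus\overline X)\oplus\underline X$ --- that is, the parameter $\mu$ is absorbed into the degenerate block of the splitting rather than treated as a spectator, and the subtracted term $\chi(\mu)+\phi_0(0)$ is afterwards re-absorbed into $f_\mu$. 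You instead re-prove the fibred splitting directly (implicit function theorem for the fibre-critical section $u_\mu$, fibrewise Taylor expansion, parametric Sylvester conjugation); this is more self-contained and has the merit of exhibiting $f_\mu$ intrinsically as the fibre-critical value $\phi_\mu(\overline x,u_\mu(\overline x))$, with no linear correction needed, while the paper's trick buys brevity and reuse of already-stated machinery. For uniqueness the two arguments coincide in substance: the paper defers wholesale to the splitting theorem ``as formulated in \cite{Weinstein1971}'', which is precisely the fibred cancellation you invoke, so your identification of this as the step that cannot be done with \cref{thm:CanellationLaw} alone matches the paper's stance; your additional reduction (the fibred transition $\Theta_\mu$ between the two coordinate expressions, invariance of the corank and of the signature of $B$) is a useful elaboration of what that citation is being asked to deliver. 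One small imprecision: the fibre Taylor expansion yields a remainder matrix $C_\mu(\overline x,\underline x)$ depending on $\underline x$ as well, not $C_\mu(\overline x)$; this is harmless, since the Sylvester conjugation and the block-triangular Jacobian argument you quote from \cref{prop:IndofCotStructure} are designed exactly for a conjugating family $A$ depending on all variables.
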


\begin{proof}
The lemma follows from the splitting theorem as formulated in \cite{Weinstein1971}. Compared with \cref{lem:ParametricMorse} it incorporates uniqueness properties.
Let us indicate a way to obtain the claimed existence result from \cref{lem:ParametricMorse}.
Let the dimension of the kernel of the Hessian matrix of $\phi_0$ at $z_0$ be $k$.
We find an $n-k$-dimensional submanifold $\underline X \subset X$ containing $z_0$ such that the Hessian matrix of $\phi_0|_{\underline X}$ at $z_0$ is nondegenerate.
Consider a submanifold $\overline X$ containing $z_0$ that is transversal to $\underline X$. We apply the parametric\footnote{In this context the parameters are $(\mu,\overline x)$} Morse Lemma (\cref{lem:ParametricMorse}) to
\[(\mu,x)\mapsto \phi_\mu(x)
- \underbrace{\langle \nabla_\mu \phi_\mu(0)|_{\mu=0},\mu\rangle}_{=:\chi(\mu)}-\phi_0(0)\]
with respect to the splitting $(I \oplus \overline X) \oplus \underline X$. Notice that the 1-jet of the above function with respect to the coordinates $((\mu,\overline x), \underline x)$ vanishes. A fibred change of coordinates yields local coordinates $((\mu,\overline x), \underline x)$ with $x=(\overline x, \underline x)=((x^1,\ldots,x^k),(x^{k+1},\ldots,x^n))$ on $X$ centred at $z_0$ such that $(\overline x,0)$ are coordinates on $\overline X$ and $(0,\underline x)$ are coordinates on $\underline X$ and
\[
\phi_\mu(x) = \tilde f_\mu(\overline x)+\phi_0(0)+\chi(\mu)+\underline x^{\top} B \underline x =: f_\mu(\overline x)+\underline x^{\top} B \underline x
\]
The function $\overline x \mapsto f_0(\overline x)-\phi_0(0)$ has a vanishing 2-jet at $\overline x=0$ and $B = \mathrm{Hess}\, \phi_0|_{\underline X}$ is invertible.
%
\end{proof}


\begin{proposition}\label{prop:IndofCotStructureParameter}
Let $(X,(\Lambda_\mu)_{\mu \in I},z_0)$ be a parameter dependent Lagrangian contact problem in $Z$. Let $I\subset \R^k$ denote an open neighbourhood of $0$.
Consider smooth families of cotangent bundle structures such that $\Lambda_\mu$ is given as the image of the section $\d \phi_\mu^\alpha$ and $\d \phi_\mu^\beta$ locally around $z_0\in Z$ with $\phi_0^\alpha(z_0)=0=\phi_0^\beta(z_0)$, respectively.
Then $(\phi^\alpha_\mu)_\mu$ and $(\phi^\beta_\mu)_\mu$ admit the same Morse-reduced forms up to an addition of a smooth function $\chi(\mu)$ with $\chi(0)=0$.
\end{proposition}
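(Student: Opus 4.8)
The plan is to re-run the proof of \cref{prop:IndofCotStructure} fibrewise in the parameter $\mu$, substituting the parameter-dependent tools for the non-parametric ones: the splitting of the generating families is performed with \cref{lem:MorseRedForm} in place of the bare parametric Morse lemma, and the ``switching-on'' of the off-diagonal correction terms is performed with \cref{lem:SwitchOnParameter} in place of \cref{lem:SwitchOn}. Because ``stable right equivalence'' is not the natural invariant for families, the conclusion is phrased through the Morse-reduced form, which is exactly the quantity whose well-definedness (up to a $\mu$-fibred right action on $\overline x$ and addition of $\chi(\mu)$) is guaranteed by the uniqueness part of \cref{lem:MorseRedForm}.

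First I would apply \cref{lem:MorseRedForm} to $(\phi^\alpha_\mu)_\mu$, obtaining coordinates $x=(\overline x,\underline x)$ centred at $z_0$, an invertible symmetric matrix $B$, and a smooth family $(f_\mu)_\mu$ with $\nabla_{\overline x}f_0(0)=0$ and $\Hess f_0(0)=0$ such that $\phi^\alpha_\mu(x)=f_\mu(\overline x)+\underline x^{\top}B\underline x$; thus $(f_\mu)_\mu$ is a Morse-reduced form of $(\phi^\alpha_\mu)_\mu$. Next I would invoke a parameter-dependent version of \cref{lem:CompareCotStr}: as the two families of cotangent bundle structures are smooth in $\mu$, the primitive $H$ of the difference of the associated canonical $1$-forms may be built smoothly in $\mu$ (closedness, the local primitive, and the Taylor argument yielding the form \eqref{eq:ExprHinLemma} all respect parameters), so that
\[
\phi^\beta_\mu\circ k^\mu_{\alpha\beta} = \phi^\alpha_\mu + H_\mu(x,\nabla\phi^\alpha_\mu(x)) + \chi(\mu).
\]
The only genuinely new feature is the integration constant $\chi(\mu)$; the normalisation $\phi^\alpha_0(z_0)=0=\phi^\beta_0(z_0)$ gives $\chi(0)=0$, while $\chi(\mu)$ need not vanish for $\mu\neq0$ since the intersection point of $X$ and $\Lambda_\mu$ moves with $\mu$. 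Writing $\mathcal H_\mu=H_\mu(\cdot,\nabla\phi^\alpha_\mu)$ in the block form from the proof of \cref{prop:IndofCotStructure} and setting $B'_\mu(x)=B+4B\mathcal H^{22}_\mu(x)B$, the Hessian-kernel computation at $(\mu,x)=(0,0)$ gives $\Hess(\phi^\beta_0\circ k^0_{\alpha\beta})(0,0)=\begin{pmatrix}0&0\\0&2B'_0(0)\end{pmatrix}$, and graphicality of $\Lambda_0$ in both structures forces $B'_0(0)$ to be invertible; by continuity $B'_\mu(x)$ is then invertible of constant signature near $(\mu,x)=(0,0)$.

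I would then perform the two reductions fibrewise. A parametric Sylvester's law of inertia supplies a smooth family of invertible matrices $A_\mu(x)$ with $A_\mu(x)^{-T}B'_\mu(x)A_\mu(x)^{-1}=B'_0(0)$, so that the $\mu$-fibred map $r_\mu(\overline x,\underline x)=(\overline x,A_\mu(x)\underline x)$ is a right equivalence fixing $x=0$. Defining $\tilde{\mathcal H}^{11}_\mu,\tilde{\mathcal H}^{12}_\mu$ as in \eqref{eq:deftildeH} and applying \cref{lem:SwitchOnParameter} produces a $\mu$-fibred right equivalence $s_\mu$ relating $\phi^{\alpha}_{1,\mu}:=f_\mu(\overline x)+\underline x^{\top}B'_0(0)\underline x$ to its switched-on form $\phi^{\alpha}_{2,\mu}$, while the identities $A_\mu^{\top}B'_0(0)A_\mu=B'_\mu$ and $A_\mu^{\top}A_\mu^{-T}=\Id$ reproduce, exactly as in the comparison of \eqref{eq:phi2alpha} with \eqref{eq:phibetacoords}, the equality $\phi^{\alpha}_{2,\mu}\circ r_\mu=\phi^\beta_\mu\circ k^\mu_{\alpha\beta}-\chi(\mu)$. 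Chaining these fibred right equivalences with $k^\mu_{\alpha\beta}$ and the shift by $\chi(\mu)$ shows that $(\phi^\beta_\mu)_\mu$ is obtained from the family $f_\mu(\overline x)+\underline x^{\top}B'_0(0)\underline x$ by a $\mu$-fibred diffeomorphism of $x$-space together with addition of $\chi(\mu)$. Since both $f_\mu+\underline x^{\top}B'_0(0)\underline x$ and $\phi^\alpha_\mu=f_\mu+\underline x^{\top}B\underline x$ are already in Morse-reduced form with the \emph{same} reduced part $f_\mu$ (the differing signature of the nondegenerate block being irrelevant to the reduced part), the uniqueness statement of \cref{lem:MorseRedForm} lets me conclude that $(\phi^\alpha_\mu)_\mu$ and $(\phi^\beta_\mu)_\mu$ admit the common Morse-reduced form $f_\mu$ up to a fibred right action $(\mu,\overline x)\mapsto(\mu,r_\mu(\overline x))$ and addition of $\chi(\mu)$.

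The routine parts are the block algebra and the identity $\phi^{\alpha}_{2,\mu}\circ r_\mu=\phi^\beta_\mu\circ k^\mu_{\alpha\beta}-\chi(\mu)$, which are just the $\mu$-indexed copies of the non-parametric calculation. The steps where I expect the real work are the two smoothness-in-$\mu$ assertions — that the primitive in \cref{lem:CompareCotStr} and the Sylvester-normalising family $A_\mu(x)$ can be chosen to depend smoothly on $\mu$ — together with the careful bookkeeping of the constant $\chi(\mu)$, which is precisely the ingredient that turns the non-parametric ``stable right equivalence'' into ``the same Morse-reduced form up to addition of $\chi(\mu)$''.
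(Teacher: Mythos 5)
Your proposal is correct and takes essentially the same route as the paper's proof: Morse-reduce $(\phi^\alpha_\mu)_\mu$ via \cref{lem:MorseRedForm}, apply \cref{lem:CompareCotStr} fibrewise in $\mu$ (producing the integration constant $\chi(\mu)$ with $\chi(0)=0$), Sylvester-normalise $B'_\mu(x)$ by a smooth family $A_\mu(x)$, and switch on the correction terms with \cref{lem:SwitchOnParameter} before comparing with the fibrewise analogue of \eqref{eq:phibetacoords}. Your one deviation --- normalising the congruence to $B'_0(0)$ rather than the paper's $B'_\mu(0)$ in \eqref{eq:SylvesterBprimeParam} --- is harmless and in fact slightly tidier, since it makes the resulting family literally satisfy \cref{def:MorseReducedForm}, whose quadratic block is required to be $\mu$-independent.
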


\begin{proof}
The following proof is a parameter-dependent version of the proof of \cref{prop:IndofCotStructure} using \cref{lem:SwitchOnParameter} instead of \cref{lem:SwitchOn}.

Let $k$ be the dimension of the kernel of the Hessian matrix of $\phi_0^\alpha$ at $z_0$ and let $n$ be the dimension of $X$.
By \cref{lem:MorseRedForm} there exist a local coordinate system $x=(\overline x, \underline x)=((x^1,\ldots,x^k),(x^{k+1},\ldots,x^n))$ on $X$ centred at $z_0$ such that
\[
\phi^\alpha_\mu(x) =  f_\mu(\overline x)+\underline x^{\top} B \underline x,
\]
the function $\overline x \mapsto f_0(\overline x)$ has a vanishing 2-jet at $\overline x=0$, and $B = \mathrm{Hess}\, \phi^\alpha_0|_{\underline X}$ is invertible.
A fibre-wise application of \cref{lem:CompareCotStr} yields
\begin{equation}\label{eq:ApplyCompCotParam}
\phi_\mu^\beta \circ k^\mu_{\alpha \beta} = \phi_\mu^\alpha + H_\mu(x,\nabla \phi_\mu^\alpha) +\chi(\mu) ,
\end{equation}
for a smooth function $\chi$ with $\chi(0)=0$ and 
\[
H_\mu(x,\xi) = \sum_{i,j =1}^n h^\mu_{ij}(x,\xi) \xi_i \xi_j
= \xi^{\top}\begin{pmatrix}
H_\mu^{11}(x,\xi)&H_\mu^{12}(x,\xi)\\
H_\mu^{12}(x,\xi)^{\top}&H_\mu^{22}(x,\xi)
\end{pmatrix}\xi
\]
with $H_\mu^{11}(x,\xi)\in \mathrm{Sym}(k)$, $H_\mu^{12}(x,\xi) \in \R^{k \times (n-k)}$, $H_\mu^{22}(x,\xi)\in \mathrm{Sym}(n-k)$, and with $k^\mu_{\alpha\beta} = \pi^\beta \circ (\pi^\alpha|_{\Lambda_\mu})^{-1}$.
For $i,j \in \{1,2\}$ define
\[
\mathcal H_\mu(x) 
=\mathcal H_\mu(\overline x, \underline x) 
= H_\mu(x,\nabla \phi_\mu^\alpha(x)),
\quad
\mathcal H_\mu^{ij}(x) 
= \mathcal H_\mu^{ij}(\overline x, \underline x) 
= H_\mu^{ij}(x,\nabla \phi_\mu^\alpha(x)).
\]
Define
\[
B'_\mu(x) = B + 4B \mathcal H_\mu^{22}(x)B.
\]

We calculate
\begin{equation}\label{eq:phibetacoordsParam}
\begin{split}
\phi_\mu^\beta(k^\mu_{\alpha \beta}(\overline x, \underline x)) 
&\stackrel{\eqref{eq:ApplyCompCotParam}}= \phi_\mu^\alpha(\overline x, \underline x) + \mathcal H_\mu(x)+ \chi(\mu)\\
&\;\;\;=  f_\mu(\overline x)+\underline x^{\top} B_\mu'(x) \underline x 
+ \nabla_{\overline x}f_\mu(\overline x)^{\top} \mathcal H_\mu^{11}(x) \nabla_{\overline x} f_\mu(\overline x) \\
&\;\;\;\phantom{=}+ 4 \underline x^{\top} B \mathcal H_\mu^{12}(x)^{\top} \nabla_{\overline x} f_\mu(\overline x) 
+ \chi(\mu). 
\end{split}
\end{equation}

We calculate the Hessian matrix of $\phi_0^\beta \circ k^0_{\alpha \beta}$ at $(\overline x, \underline x)=(0,0)$ using \eqref{eq:phibetacoordsParam} and obtain
\[
\Hess(\phi_0^\beta \circ k_{\alpha \beta})(0,0)
=\begin{pmatrix}
0&0\\0& 2B_0'(0)
\end{pmatrix}.
\]
Since $\Lambda_0$ is graphical in both cotangent bundle structures, $B'_0(0)$
must be invertible by a dimension argument.
For $(\mu,x)$ in a sufficiently small neighbourhood of $(0,0)$ the matrices $B_\mu'(x)$ are invertible and have constant signature.
By Sylvester's law of inertia, there exists a smooth family of invertible matrices $A_\mu(x)$ such that
\begin{equation}\label{eq:SylvesterBprimeParam}
A_\mu^{-\top}(x) B'_\mu(x) A_\mu^{-1}(x) = B'_\mu(0)
\end{equation}
for all $(\mu,x)$ near $(0,0)$. Consider
\[
r_\mu(\overline x,\underline x)=(\overline x, A_\mu(x) \underline x).
\]
For all $\mu$ near 0 the map $r_\mu$ fixes $x=0$ and is a diffeomorphism on a neighbourhood of $x=0$.
Let us define
\begin{equation}\label{eq:deftildeHParam}
\begin{split}
\tilde {\mathcal H}_\mu^{11}(x) &\coloneqq \mathcal H_\mu^{11}(r_\mu^{-1}(x))\\
\tilde {\mathcal H}_\mu^{12}(x) &\coloneqq \mathcal H_\mu^{12}(r_\mu^{-1}(x)) B A^{-1}_\mu(r_\mu^{-1}(x)) B_\mu'(0)^{-1}.
\end{split}
\end{equation}
By \cref{lem:SwitchOnParameter} the function
\[\phi^\alpha_1 \coloneqq f_\mu(\overline x) + \underline x^{\top} B'_\mu(0) \underline x\]
is right equivalent to
\begin{align*}
\phi^{\alpha}_2(\mu,x) &\coloneqq  f_\mu(\overline x)+\underline x^{\top} B'_\mu(0) \underline x\\
&\phantom{=}+ \nabla_{\overline x}f_\mu(\overline x)^{\top} \tilde {\mathcal H}_\mu^{11}(x) \nabla_{\overline x} f_\mu(\overline x)\\ 
&\phantom{=}+ 4 \underline x^{\top} B'_\mu(0) \tilde {\mathcal H}_\mu^{12}(x)^{\top} \nabla_{\overline x} f_\mu(\overline x). 
\end{align*}

We have
\begin{align*}
(\phi^{\alpha}_2\circ r_\mu)(x)
&=  f_\mu(\overline x)+\underline x^{\top} A_\mu(x)^{\top} B'_\mu(0) A_\mu(x)\underline x\\
&\phantom{=}+ \nabla_{\overline x}f_\mu(\overline x)^{\top} \tilde {\mathcal H}_\mu^{11}(r_\mu(x)) \nabla_{\overline x} f_\mu(\overline x)\\ 
&\phantom{=}+ 4 \underline x^{\top} A_\mu(x)^{\top} B_\mu'(0) \tilde {\mathcal H}_\mu^{12}(r_\mu(x))^{\top} \nabla_{\overline x} f_\mu(\overline x).
\end{align*}
Comparing the above with \eqref{eq:phibetacoordsParam} shows that the function $\phi^{\alpha}_2\circ r_\mu$ coincides with $\phi_\mu^\beta \circ k^\mu_{\alpha \beta} - \chi(\mu)$. This proves the claim.
\end{proof}

\begin{definition}[stably right equivalence as unfoldings]\label{def:StabEqUnfolding}
Consider open neighbourhoods $I$ of $0 \in \R^l$ and $I'$ of $0 \in \R^{l'}$ and two families of scalar-valued functions $(\phi_\mu)_{\mu \in I}$ and $(\phi'_{\mu'})_{\mu' \in I'}$ defined around $0 \in \R^{n}$ or $0 \in \R^{n'}$, respectively. Assume that $\phi_0(0)=0$ and $\phi'_0(0)=0$.
If \begin{itemize}
\item
there exists an extension of the parameter spaces $I$ to $I \times \R^{L-l}$ and $I'$ to $I' \times \R^{L-l'}$ for some $L \in \N$, $L \ge \max(l,l')$ where the additional parameters enter trivially in the function families (constant unfolding),
\item
there exist reparametrisations of both parameter spaces fixing $0 \in I \times \R^{L-l}$ and $0 \in I' \times \R^{L-l'}$, respectively,
\item
and for an open neighbourhood $\hat I \subset (I \times \R^{L-l}) \cap (I' \times \R^{L-l'})$ of 0 there exists a smooth function $\chi \colon \hat I \to \R$ 
\end{itemize}
such that the families $(\phi_{\hat \mu} + \chi(\hat \mu))_{\hat \mu \in \hat I}$ and $(\phi_{\hat \mu})_{\hat \mu \in \hat I}$ admit the same Morse-reduced forms then $(\phi_\mu)_{\mu \in I}$ and $(\phi'_{\mu'})_{\mu' \in I'}$ are {\em stably right equivalent as unfoldings}.
\end{definition}

\begin{remark}
\Cref{def:StabEqUnfolding} corresponds to the equivalence relation {\em reduction} in definition 5.3 (p.124) of Wassermann's dissertation \cite{Wassermann1974}. The necessary notions are developed in \cite[\S 4, \S 5]{Wassermann1974}. Here, however, we use right equivalence where the reference uses right-left equivalence.
\end{remark}

\begin{proposition}\label{prop:FullParameterContact}
Let $((X_\mu)_{\mu \in I},(\Lambda_\mu)_{\mu \in I},z_0)$ be a parameter-dependent Lagrangian contact problem in $Z$.
Consider two cotangent bundle structures on $Z$ over $X_0$ locally around $z_0 \in X_0 \cap \Lambda_0$ such that for each $\mu \in I$ near 0 after shrinking all involved manifolds around $z_0$, if necessary,
the submanifold $\Lambda_\mu$ is the image of the section $\d \phi_\mu$ and $X_\mu$ the image of the section $\d \psi_\mu$ with respect to the first cotangent bundle structure
and $\Lambda_\mu$ is the image of the section $\d \phi'_\mu$ 
and $X_\mu$ is the image of the section $\d \psi'_\mu$ 
with respect to the second cotangent bundle structure such that $\phi_0,\psi_0,\phi'_0,\psi'_0$ vanish at $z_0$.
Then the families $(\rho_\mu)_\mu = (\phi_\mu - \psi_\mu)_\mu$ and $(\rho'_\mu)_\mu = (\phi'_\mu - \psi'_\mu)_\mu$ are stably right equivalent as unfoldings.
\end{proposition}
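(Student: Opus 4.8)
The plan is to reduce the claim to \cref{prop:IndofCotStructureParameter}, which already settles the analogous statement for a \emph{single} family of Lagrangian submanifolds over a \emph{fixed} base. The obstruction to applying it directly is that here the reference manifold $X_\mu$ moves with $\mu$, so that neither $\phi_\mu-\psi_\mu$ nor $\phi'_\mu-\psi'_\mu$ is, on the nose, a generating function family of a contact problem over the fixed zero section $X_0$. The key device, borrowed from the proof of \cref{lem:OtherRefMfld}, is the fibre translation that shifts $X_\mu$ onto $X_0$; note that since $X_0=X_{\mu=0}$ is the zero section of both structures, $\psi_0$ and $\psi'_0$ vanish identically.

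First I would treat the two cotangent bundle structures in turn. Writing $\lambda,\lambda'$ for the two canonical $1$-forms over $X_0$ and $\pi,\pi'$ for the associated projections, set
\[
\hat\chi_\mu(z) = z - \d\psi_\mu|_{\pi(z)}, \qquad \hat\chi'_\mu(z) = z - \d\psi'_\mu|_{\pi'(z)},
\]
the fibre translations of the two structures (in the notation of \cref{lem:OtherRefMfld}). Each is a smooth family of symplectomorphisms with $\hat\chi_0=\hat\chi'_0=\id$, mapping $X_\mu$ onto $X_0$ and carrying $\Lambda_\mu$ to $\tilde\Lambda_\mu \coloneqq \d\rho_\mu(X_0)$ and $\tilde\Lambda'_\mu \coloneqq \d\rho'_\mu(X_0)$, respectively. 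Because $X_0$ is the zero section of both structures, $(\rho_\mu)_\mu$ is exactly the generating function family of $(X_0,(\tilde\Lambda_\mu)_\mu,z_0)$ with respect to $\lambda$, and $(\rho'_\mu)_\mu$ that of $(X_0,(\tilde\Lambda'_\mu)_\mu,z_0)$ with respect to $\lambda'$; both are smooth Lagrangian families near $\mu=0$, and the normalisation $\rho_0(z_0)=\rho'_0(z_0)=0$ holds.

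Next I would glue the two pictures together. The composite $G_\mu \coloneqq \hat\chi'_\mu \circ \hat\chi_\mu^{-1}$ is a smooth family of symplectomorphisms with $G_0=\id$ satisfying $G_\mu(X_0)=X_0$ and $G_\mu(\tilde\Lambda_\mu)=\tilde\Lambda'_\mu$. Pulling the second structure back along $G_\mu$ produces a smooth family of cotangent bundle structures $G_\mu^\ast\lambda'$ over the \emph{fixed} zero section $X_0$ (it vanishes on $G_\mu^{-1}(X_0)=X_0$), in which $\tilde\Lambda_\mu$ remains graphical for $\mu$ near $0$ after shrinking, and whose generating function family is fibred right equivalent to $(\rho'_\mu)_\mu$ by the parameter-dependent version of \cref{lem:ChangeCoords}. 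Thus $(X_0,(\tilde\Lambda_\mu)_\mu,z_0)$ is a single parameter-dependent contact problem over the fixed base $X_0$, described by the two structures $\lambda$ and $G_\mu^\ast\lambda'$, with generating function families $(\rho_\mu)_\mu$ and, up to fibred right equivalence, $(\rho'_\mu)_\mu$. \Cref{prop:IndofCotStructureParameter} then yields that these admit the same Morse-reduced form up to an additive $\chi(\mu)$, which is precisely stable right equivalence as unfoldings in the sense of \cref{def:StabEqUnfolding}.

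I expect the main obstacle to be the construction of $G_\mu$ as a coherent smooth family fixing $X_0$ setwise while intertwining $\tilde\Lambda_\mu$ and $\tilde\Lambda'_\mu$, together with the verification that $G_\mu^\ast\lambda'$ is a bona fide smooth family of cotangent bundle structures over $X_0$ in which $\tilde\Lambda_\mu$ stays graphical near $\mu=0$ (requiring a shrinking argument). The remaining bookkeeping — carrying the additive term $\chi(\mu)$ and the fibred reparametrisations through so that the conclusion lands exactly in \cref{def:StabEqUnfolding}, and confirming that the fibred right equivalence supplied by \cref{lem:ChangeCoords} preserves Morse-reduced forms — is routine once this fixed-base reduction is in place.
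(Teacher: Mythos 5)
Your proposal is correct and is essentially the paper's own argument: the paper likewise reduces to \cref{prop:IndofCotStructureParameter} via the fibre translations $\xi \mapsto \xi-\d \psi_\mu|_{\pi(\xi)}$ and $\xi \mapsto \xi-\d \psi'_\mu|_{\pi'(\xi)}$, phrased passively as modifications of the two cotangent bundle structures (so that every $X_\mu$ becomes a zero section) rather than actively as maps carrying $X_\mu$ onto $X_0$. Your explicit gluing map $G_\mu$ and the parameter-dependent use of \cref{lem:ChangeCoords} merely spell out why the fixed-base \cref{prop:IndofCotStructureParameter} then applies --- a step the paper leaves implicit.
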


\begin{proof}
We modify the first cotangent bundle structure using the fibre-preserving symplectic diffeomorphism $\xi \mapsto \xi-\d \psi_\mu|_{\pi(\xi)}$ and the second cotangent bundle structure by $\xi \mapsto \xi-\d \psi'_\mu|_{\pi'(\xi)}$ fibre-wise. In the updated structures all $X_\mu$ are zero-sections and $\Lambda_\mu$ is given as the image of the section $\d(\phi_\mu - \psi_\mu) = \d \rho_\mu$ with respect to the first structure and as the image of the section $\d(\phi'_\mu - \psi'_\mu) = \d \rho'_\mu$ with respect to the second structure. Now the claim follows by \cref{prop:IndofCotStructureParameter}.
\end{proof}

The smooth family $(\rho_\mu)_\mu$ of functions constructed in \cref{prop:FullParameterContact} is called a {\em generating family for $((X_\mu)_{\mu \in I},(\Lambda_\mu)_{\mu \in I},z_0)$}. More precisely, we can formulate the following definition.

\begin{definition}[generating family]
Consider a parameter-dependent Lagrangian contact problem $((X_\mu)_{\mu \in I},(\Lambda_\mu)_{\mu \in I},z_0)$ and a cotangent bundle structure over $X_0$ such that for $\mu$ in a neighbourhood of 0 in $I$ the manifolds $X_\mu$, $\Lambda_\mu$ are locally around $z_0$ given as the images of $\d \psi_\mu$ and $\d \phi_\mu$, respectively, for smooth function families $(\psi_\mu)_{\mu \in I}$, $(\phi_\mu)_{\mu \in I}$ with $\psi_0(z_0) = 0 = \phi_0(z_0)$. The family $(\rho_\mu)_{\mu \in I}$ with $\rho_\mu = \phi_\mu - \psi_\mu$ is called a {\em generating family for $((X_\mu)_{\mu \in I},(\Lambda_\mu)_{\mu \in I},z_0)$}.
\end{definition}


\begin{definition}[contact equivalence of parameter-dependent Lagrangian contact problems]
Let $\hat {\mathcal L} = ((X_{\hat \mu})_{{\hat \mu} \in \hat I},(\Lambda_{\hat \mu})_{{\hat \mu} \in \hat I},z_0)$ and ${\mathcal L}' = ((X'_{\mu'})_{\mu' \in I'},(\Lambda'_{\mu'})_{\mu' \in I'},z'_0)$ be Lagrangian contact problems in $Z$ and $Z'$ where $\dim Z = \dim Z'$. The families are called {\em contact equivalent} if there exists a constant unfolding $((X_\mu)_{\mu \in I},(\Lambda_\mu)_{\mu \in I},z_0)$ of $\hat {\mathcal L}$ and a constant unfolding $((X'_{\mu})_{\mu \in I},(\Lambda'_{\mu})_{\mu \in I},z'_0)$ of ${\mathcal L}'$ such that after shrinking $Z$ to an open neighbourhood of $z_0$ and $Z'$ to an open neighbourhood of $z'_0$, there exists a smooth family of symplectomorphisms
$\Phi_\mu\colon Z \to Z'$ such that for all $\mu$ in an open neighbourhood of $0\in I$
\[
\Phi_\mu(X_\mu)=X'_{\theta(\mu)},\quad \Phi(\Lambda_\mu)=\Lambda'_{\theta(\mu)}, \quad \Phi_0(z_0)=z_0',
\]
where $\theta$ is a diffeomorphism defined around $0 \in I$ fixing $\mu=0$.
\end{definition}

We conclude the section with the following theorem which extends \cref{prop:CoEqvStabEqv} to the parameter-dependent case.

\begin{theorem}\label{thm:EQParamFamilies}
Two parameter-dependent Lagrangian contact problems in a symplectic manifold are contact equivalent if and only if their generating families are stably right equivalent as unfoldings.
\end{theorem}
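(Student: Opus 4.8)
The plan is to prove the two implications separately, following the architecture of the non-parametric \cref{prop:CoEqvStabEqv} but carrying the family parameter $\mu$ through each construction and invoking the parameter-dependent tools \cref{lem:SwitchOnParameter}, \cref{lem:MorseRedForm}, \cref{prop:IndofCotStructureParameter} and \cref{prop:FullParameterContact} in place of their single-problem analogues. Throughout I localise around $z_0$ and $z_0'$ without further comment.

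For the \emph{only if} direction, suppose the two problems are contact equivalent. By definition this already supplies constant unfoldings of both problems over a common parameter domain, a reparametrisation $\theta$ fixing $\mu=0$, and a smooth family of symplectomorphisms $\Phi_\mu$ with $\Phi_\mu(X_\mu)=X'_{\theta(\mu)}$ and $\Phi_\mu(\Lambda_\mu)=\Lambda'_{\theta(\mu)}$. A parameter-dependent version of \cref{lem:ChangeCoords} then shows that $\Phi_\mu$ transports any cotangent bundle structure over $X_\mu$ to one over $X'_{\theta(\mu)}$, the canonical $1$-forms being related by pullback, so a generating family for one problem becomes, after the reparametrisation $\theta$, a generating family for the other with respect to the transported structure. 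Since \cref{prop:FullParameterContact} guarantees that generating families computed from different cotangent bundle structures are stably right equivalent as unfoldings, and since the reparametrisation $\theta$ and the constant unfoldings are exactly the operations permitted in \cref{def:StabEqUnfolding}, the two generating families are stably right equivalent as unfoldings.

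For the \emph{if} direction, suppose the generating families $(\rho_\mu)_\mu$ and $(\rho'_{\mu'})_{\mu'}$ are stably right equivalent as unfoldings. By \cref{def:StabEqUnfolding} I may, after constant unfoldings of the parameter spaces and reparametrisations, assume both families live over a common domain and admit the same Morse-reduced form up to an additive term $\chi(\mu)$ depending only on the parameter. On the geometric side I first use the fibre-preserving symplectic diffeomorphisms of \cref{prop:FullParameterContact} to arrange that each $X_\mu$ is the zero section, so that $\Lambda_\mu=\d\rho_\mu(X)$; the crucial point is that adding $\chi(\mu)$ leaves $\d_x\rho_\mu$, and hence $\Lambda_\mu$, unchanged, so that the term $\chi$ is geometrically invisible. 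It then remains to realise the reductions bringing one Morse-reduced family to the other as a smooth family of symplectomorphisms fixing the zero section. I would reproduce the chain $\Phi^{(0)}_\mu,\Phi^{(1)}_\mu,\Phi^{(2)}_\mu,\ldots$ of \cref{prop:CoEqvStabEqv} with $\mu$-dependence: matching of the reduced parts $f_\mu$ and $f'_{\theta(\mu)}$ (via \cref{lem:MorseRedForm}) is realised by the cotangent lift of a fibred diffeomorphism $(\mu,\overline x)\mapsto(\mu,r_\mu(\overline x))$, matching of the nondegenerate quadratic parts by a $\mu$-dependent change of structure $\lambda'=\lambda+\d H_\mu$, and the switch-on of the mixed lower-order terms by \cref{lem:SwitchOnParameter}; a constant unfolding of the generating family corresponds to a constant unfolding of the Lagrangian families and the parameter reparametrisation to the diffeomorphism $\theta$.

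The main obstacle lies entirely in the \emph{if} direction. One must reproduce the intermediate symplectomorphisms of \cref{prop:CoEqvStabEqv} with smooth $\mu$-dependence, which requires the relevant matrices, in particular $B'_\mu(0)$ and the diagonalising family $A_\mu(x)$ furnished by Sylvester's law of inertia, to vary smoothly and with constant signature; this is precisely what \cref{prop:IndofCotStructureParameter} and \cref{lem:SwitchOnParameter} secure, so the smoothness itself is not the difficulty. A genuine point to check is that, when the two quadratic parts have different signatures, the $\mu$-dependent structure change $\lambda'=\lambda+\d H_\mu$ (as in the remark following \cref{prop:IndofCotStructure}) still leaves the relevant $\Lambda_\mu$ graphical for all $\mu$ near $0$. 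The substantive work, however, is translating the three bookkeeping operations of \cref{def:StabEqUnfolding} — parameter extension, reparametrisation, and the additive term $\chi$ — faithfully into the data of a contact equivalence, namely constant unfolding of the Lagrangian families, the diffeomorphism $\theta$, and the geometrically invisible freedom noted above, while verifying step by step that each constructed $\Phi^{(i)}_\mu$ fixes the zero section and the base point $z_0$ for all small $\mu$.
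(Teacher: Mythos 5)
Your proposal is correct and follows essentially the same route as the paper's own proof: the forward direction by transporting the cotangent bundle structure along the family $\Phi_\mu$ so that one generating family also generates the other problem and then invoking the structure-independence results (\cref{prop:IndofCotStructureParameter}, \cref{prop:FullParameterContact}), and the reverse direction by reducing to the zero-section case, passing to Morse-reduced forms (\cref{lem:MorseRedForm}), and realising the fibred right equivalence through the same chain of cotangent lifts, the quadratic-part structure change $\lambda\mapsto\lambda+\d H$, and \cref{lem:SwitchOnParameter}. The points you flag (graphicality under the structure change, and the bookkeeping of constant unfoldings, $\theta$, and the geometrically invisible $\chi(\mu)$) are exactly the details the paper's proof settles, the first by choosing $D$ so that $-B^{-1}+DB'D$ is invertible.
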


\begin{proof}
Let $((X_\mu)_{\mu \in I},(\Lambda_\mu)_{\mu \in I},z_0)$ and $((X'_{\mu'})_{\mu' \in I'},(\Lambda'_{\mu'})_{\mu' \in I'},z'_0)$ be parameter dependent Lagrangian contact problems in $Z$.
Since symplectic manifolds of the same dimension are locally symplectomorphic, we can assume $z_0 = z'_0$. (Also see \cref{lem:ChangeCoords}.) Moreover, as the notion of contact equivalence of Lagrangian contact problems as well as the notion of stably equivalence as unfoldings admits the extensions of the parameter spaces via constant unfoldings, it is sufficient to proof the assertion for $I = I'$.
In the following, we will shrink the manifold $Z$ to neighbourhoods of $z_0$, and $I$ to a neighbourhood of $0$, repeatedly, without mentioning.
As seen in the proof of \cref{prop:FullParameterContact} we can reduce the problem to a problem with a constant family $X_\mu \equiv X$.

For the forward direction, assume that $(X,(\Lambda_\mu)_{\mu \in I},z_0)$ and $(X,(\Lambda'_{\mu})_{\mu \in I},z_0)$ are contact equivalent.
There exists a family of symplectomorphisms $\Phi_\mu \colon Z \to Z$ with
\[
\Phi_\mu(X) = X, \quad \Phi_\mu(\Lambda_\mu) = \Lambda'_{\theta(\mu)}, \quad \Phi_0(z_0) = z_0.
\]
for a local diffeomorphism fixing $\mu=0$.
After a reparametrisation of the second family, we can assume $\theta = \id$. Now the proof proceeds similarly to the first part of the proof of \cref{prop:CoEqvStabEqv}: consider a cotangent bundle structure over $X$ such that $\Lambda_\mu$ is given as the image of $\d \phi_\mu$ for a smooth family $(\phi_\mu)_{\mu \in I}$ with $\phi_\mu(0)=0$.
Denote the cotangent bundle projection by $\pi$. Using $\Phi_\mu$ we can construct another cotangent bundle structure over $X$ with cotangent bundle projection $\pi_\mu' = \pi \circ \Phi^{-1}_\mu$.
The map $\Phi_\mu \circ \d \phi_\mu$ is a section of $\pi_\mu'$ and maps $X$ to $\Lambda'_\mu$. Therefore, in the new structure, the family $(\Lambda'_\mu)_{\mu \in I}$ can again be represented by the family $(\phi_\mu)_{\mu \in I}$.

For the reverse direction, assume that $(X,(\Lambda_\mu)_{\mu \in I},z_0)$ and $(X,(\Lambda'_{\mu})_{\mu \in I},z_0)$ have equivalent generating families. Consider a cotangent bundle structure over $X$ such that $\Lambda_\mu$, $\Lambda'_{\mu}$ are graphical and given as
\[
\phi_\mu(X)= \Lambda_\mu, \quad \phi'_{\mu}(X)= \Lambda'_{\mu}.
\]
By \cref{lem:MorseRedForm} there exist coordinates $(\overline x, \underline x)$ and $(\overline {x'}, \underline {x'})$ centred at $z_0$ such that 
\begin{align*}
\phi_\mu(\overline x, \underline x) &= f_\mu(\overline x, \underline x) + \underline x^{\top} B \underline x\\
\phi'_{\mu}(\overline {x'}, \underline {x'}) &= f'_{\mu}(\overline {x'}, \underline {x'}) + \underline {x'}^{\top} B' \underline {x'},
\end{align*}
where the 2-jets of $f$ and $f'$ vanish at $\mu=0$ and $B$ and $B'$ are nondegenerate, symmetric matrices. We have $f_\mu(\overline x) = f'_{\theta(\mu)}(r_\mu(x))$ for a diffeomorphism $\theta$ fixing $0$ and a fibred right equivalence $r$ such that $r_0(0)=0$. After a reparametrisation of the second family, we can assume $\theta = \id$. Now the proof proceeds analogously to the second part of the proof of \cref{prop:CoEqvStabEqv}. Let us sketch the four steps to construct a family of symplectic maps around $z_0$ identifying the two contact problems.
\begin{itemize}
\item
$\Lambda'_\mu$ is mapped to a manifold that coincides with $\d \phi_\mu^{(0)}(X)$ on a neighbourhood of $z_0$ with \[\phi_\mu^{(0)}(\overline {x}, \underline {x}) = f'_{\mu}(\overline {x}, \underline {x}) + \underline {x}^{\top} B' \underline {x}\] using the cotangent lift of the change of coordinates $(\overline {x}, \underline {x}) \mapsto (\overline {x'}, \underline {x'})$.
\item
On an open neighbourhood of $z_0$ the submanifold $\d \phi_\mu^{(0)}(X)$ is mapped to $\d \phi_\mu^{(1)}(X)$ with \[\phi_\mu^{(1)}(\overline {x}, \underline {x}) = f_{\mu}(\overline {x}, \underline {x}) + \underline {x}^{\top} B' \underline {x}\] using the cotangent lift of $(\overline {x}, \underline {x}) \mapsto (r_\mu(\overline {x}), \underline {x})$.
\item
On an open neighbourhood of $z_0$ the submanifold $\d \phi_\mu^{(1)}(X)$ is mapped to $\d \phi_\mu^{(2)}(X)$ with
\[\phi_\mu^{(2)}(\overline {x}, \underline {x}) = f_{\mu}(\overline {x}, \underline {x}) + \underline {x}^{\top} BDB'DB \underline {x}\]
for a suitable choice of a symmetric, invertible matrix $D$ using the cotangent lift of $(\overline {x}, \underline {x}) \mapsto (\overline {x}, DB\underline {x})$.
\item
On an open neighbourhood of $z_0$ the submanifold $\d \phi_\mu^{(2)}(X)$ is mapped to $\tilde \Lambda_\mu$ using the cotangent lift of the fibred right equivalence obtained from \cref{lem:SwitchOnParameter} for 
\[
\mathcal H = \begin{pmatrix}
0&0\\
0& \frac 14(-B^{-1}+ DB'D)
\end{pmatrix} \in \R^{n \times n}.
\]
Let the original cotangent bundle structure be defined by the 1-form $\lambda$.
$\Lambda_\mu$ is mapped to $\tilde \Lambda_\mu$ using the symplectomorphism that identifies the cotangent bundle structure $\lambda$ with $\lambda + \d H$, where $H(x,\xi) = \xi^{\top} \mathcal H(x, \xi) \xi$ in Darboux coordinates with respect to the original structure.
\end{itemize}
%
%
%
%
The procedure constructs the required family of symplectomorphisms.
\end{proof}

\section[Conclusion and applications]{Concluding remarks and application to boundary value problems of symplectic maps}\label{sec:Conclusion}

\subsection{Stably contact equivalence}

It is now justified to extend the notion of contact equivalence of Lagrangian contact problems to {\em stably contact equivalence}. This will allow us to compare Lagrangian contact problems in symplectic manifolds of {\em different dimensions}.



\begin{definition}
\label{def:StableEQContact2}
Two Lagrangian contact problems (in symplectic manifolds of possibly different dimensions)
are {\em stably contact equivalent} if their generating functions are stably right equivalent.
Moreover, two parameter-dependent Lagrangian contact problems (in symplectic manifolds of possibly different dimensions) are {\em stably contact equivalent} if their generating families are stably right equivalent as unfoldings.
\end{definition}

\begin{proposition}
Two Lagrangian contact problems in symplectic manifolds of the same dimension are stably contact equivalent if and only if they are contact equivalent.
\end{proposition}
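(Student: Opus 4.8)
The plan is to read the statement off directly from the definition of stably contact equivalence together with \cref{prop:CoEqvStabEqv}, so that no genuinely new argument is required. First I would unwind the left-hand side of the asserted biconditional: by \cref{def:StableEQContact2}, two Lagrangian contact problems are stably contact equivalent precisely when their generating functions are stably right equivalent. By \cref{cor:GenFunWellDefined} the generating function of a Lagrangian contact problem is well defined up to stably right equivalence, so this condition is independent of the auxiliary choices (cotangent bundle structure, local coordinates) entering the construction of a generating function, and ``their generating functions are stably right equivalent'' is an unambiguous statement about germs.

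Next I would invoke \cref{prop:CoEqvStabEqv}, which asserts that two Lagrangian contact problems living in symplectic manifolds of equal dimension are contact equivalent if and only if their generating functions are stably right equivalent. The hypothesis of the present proposition is exactly that the two ambient symplectic manifolds have the same dimension, so \cref{prop:CoEqvStabEqv} applies verbatim. Chaining the two equivalences yields the claim: contact equivalence is equivalent to stably right equivalence of generating functions (by \cref{prop:CoEqvStabEqv}), which is in turn equivalent to stably contact equivalence (by \cref{def:StableEQContact2}).

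I would also record, for consistency, that contact equivalence in the sense of \cref{def:ContEQ} already presupposes a symplectic diffeomorphism between open neighbourhoods of the two ambient manifolds, which forces those manifolds to share a dimension; hence the equal-dimension hypothesis imposes no extra restriction on the ``contact equivalent'' side of the equivalence and the two sides are compared on the same footing.

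There is essentially no obstacle to overcome: the entire content of the proposition is carried by \cref{prop:CoEqvStabEqv}, and the remaining work is a tautological unwinding of \cref{def:StableEQContact2}. The only point meriting a moment of care is that the condition appearing in the definition be well posed, which is precisely the well-definedness statement \cref{cor:GenFunWellDefined}; beyond that, the proof reduces to a short chain of equivalences and would occupy at most a couple of lines.
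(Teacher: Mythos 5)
Your proof is correct and takes essentially the same route as the paper: the paper's own proof is a one-line citation of \cref{prop:CoEqvStabEqv} (together with \cref{thm:EQParamFamilies}), and your argument is exactly that citation spelled out, unwinding \cref{def:StableEQContact2} and noting well-definedness via \cref{cor:GenFunWellDefined}. The only difference is that the paper also invokes \cref{thm:EQParamFamilies} so as to cover the parameter-dependent half of \cref{def:StableEQContact2}; for the statement read as concerning (non-parametric) Lagrangian contact problems, your argument is complete as it stands.
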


\begin{proof}
The statement follows by the \cref{prop:CoEqvStabEqv,thm:EQParamFamilies}.
\end{proof}

The following two theorems announced in \cref{sec:intro} now follow trivially from \cref{def:StableEQContact2}. Their geometric meaning is encoded in the \cref{prop:CoEqvStabEqv,thm:EQParamFamilies}.

\CorrSingularities*

%
%


\CorBifur*

\begin{remark}\label{rem:RelationToCatastropheTheory}
The notion of versality and stability can be translated to the setting of (parameter-dependent) Lagrangian contact problems such that classification results from catastrophe theory apply (see \cite[Part II]{Arnold2012}, for instance).
%
\end{remark}


\subsection{Boundary value problems for symplectic maps}

An application is the classification of singularities and bifurcations which occur in boundary value problems for symplectic maps \cite{bifurHampaper,PhDThesis}:
consider a smooth family of symplectic maps $\phi_\mu \colon Z \to Z'$ for $\mu \in I$, where $I \subset \R^l$ is an open neighbourhood of the origin. Let us denote the symplectic form of $Z$ by $\omega$ and the symplectic form of $Z'$ by $\omega'$.
Let $\pr \colon Z \times Z' \to Z$ and $\pr' \colon Z \times Z' \to Z'$ denote projections to the first or second component of the product. Define the symplectic form $\Omega = \pr^\ast \omega-{\pr'}^\ast \omega'$ on $Z \times Z'$. The graphs of $(\phi_\mu)_\mu$ define a smooth family $(\Lambda_\mu)_\mu$ of Lagrangian submanifolds in $Z \times Z'$.

The Lagrangian contact problems $((\Lambda_\mu)_\mu,(X_\mu)_\mu,z)$ for a smooth family $(X_\mu)_\mu$ of Lagrangian submanifolds of $Z \times Z'$ and a point $z \in Z \times Z'$ can be interpreted as a family of boundary value problems for the symplectic maps $(\phi_\mu)_\mu$, where $(X_\mu)_\mu$ represents the boundary conditions. 
The elements of the intersection $\Lambda_\mu \cap X_\mu$ correspond to solutions to the boundary value problem.


\begin{example}[periodic boundary conditions] Consider $Z'=Z$, let $(\phi_\mu)_{\mu \in I}$ be a family of symplectic maps on $Z$, let $\Lambda_\mu$ denote the graph of $\phi_\mu$ viewed as a subset of $Z\times Z$ and let $X_\mu \equiv X$ be the diagonal embedding of $Z$ into $Z\times Z$. The elements of the intersection $\Lambda_\mu \cap X$ correspond to solutions to the boundary value problem $\phi_\mu(z)=z$, $z\in Z$.
\end{example}

\begin{example}[Dirichlet-type boundary conditions]
Consider a family of Hamiltonians $(H_\mu)_{\mu \in I}$ on a cotangent bundle space $T^\ast U$ with bundle projection $\pi \colon T^\ast U \to U$. Let $q^\ast,Q^\ast\in U$ and let $\phi_\mu$ denote the time-1-map corresponding to the Hamiltonian system $(H_\mu,T^\ast U,-\d \lambda)$. 
Let $(q,p)$ denote local canonical coordinates on $T^\ast U$ around a point of interest where $Z$ is a coordinate patch such that $q^\ast\in \pi(Z)$ and $Z'$ is a coordinate patch such that $Q^\ast\in \pi(Z')$. The equation
\begin{equation}\label{eq:DirichletHamEx}
\pi(\phi_\mu(q^\ast,p)) = Q^\ast
\end{equation}
is a boundary value problem for $\phi_\mu$. This kind of boundary value problems arise as first-order formulations of parameter-dependent second-order systems of ordinary differential equations with Dirichlet boundary conditions, for instance.
Let $\Lambda_\mu$ denote the graph of $\phi_\mu\colon Z \to Z'$ viewed as a subset of $Z\times Z'$ and let $X_\mu \equiv X = \pr^{-1}(q^\ast) \times {\pr'}^{-1}(Q^\ast)$. The elements of the intersection $\Lambda_\mu \cap X$ correspond to solutions to the boundary value problem \eqref{eq:DirichletHamEx}.
\end{example}

Two parameter dependent boundary value problems for symplectic maps are {\em equivalent} at a solution if and only if their corresponding parameter-dependent Lagrangian contact problems in $Z \times Z'$ are contact equivalent.
Therefore, \cref{rem:RelationToCatastropheTheory} applies to parameter-dependent boundary value problems for symplectic maps and relates the problem to classical catastrophe theory. For this also notice that small perturbations of the corresponding catastrophe theory problems relate to perturbations of the Lagrangian contact problems which can still be interpreted as boundary value problems for symplectic maps.
%
%
Indeed, we obtain a
rigorous framework for the work done in \cite{bifurHampaper}, i.e.\ the analysis of bifurcations in boundary value problems for symplectic maps. 
In particular, \cref{thm:EQParamFamilies} fills a gap in the argumentation of proposition 2.1 in \cite{bifurHampaper}.
Implications for numerical computations of bifurcation diagrams are explained in \cite{numericalPaperShort,numericalPaper,PhDThesis}
and illustrated in \cref{fig:CatastrophiesBreak}.

\begin{figure}
	
	\includegraphics[width=0.45\textwidth]{D4minus_c.jpg}
	\includegraphics[width=0.45\textwidth]{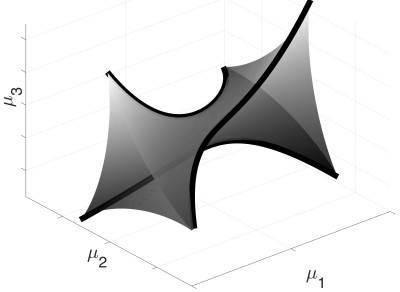}
	
	\caption{When bifurcation diagrams of parameter dependent Hamiltonian boundary value problems are computed using a numerical method, it is crucial that the symplectic flow map is approximated by an algorithmically computable flow map which is symplectic as well. The symplecticity guarantees that the computed bifurcation diagram exhibits the same stable singularities as the bifurcation diagram of the exact system. To the left we see an illustration of an elliptic umbilic bifurcation which is persistent under perturbations within the class of boundary value problems for symplectic maps. If a numerical discretisation error perturbs the problem to a boundary value problem for a non-symplectic map, the bifurcation breaks (right hand side) \cite{numericalPaperShort,numericalPaper,PhDThesis}.  }\label{fig:CatastrophiesBreak}
\end{figure}

\begin{example}
Singularities in conjugate loci can be viewed as singularities of exponential maps. Interpreting exponential maps as Lagrangian maps \cite{Janeczko1995}, the singularities can be classified via generating families up to stably $R^+$-equivalence \cite[p.304]{Arnold2012}. 
Alternatively, elements of conjugate loci can be interpreted as singularities of boundary value problems for exponential maps. Using a slightly different notion to identify the singularities, elements in the conjugate loci correspond to map germs up to right equivalence with certain symmetries \cite{obstructionPaper}.
\end{example}


\section*{Acknowledgements}
I thank Robert McLachlan for useful discussions. This research was supported by the Marsden Fund of the Royal Society Te Ap\={a}rangi and the School of Fundamental Sciences, Massey University, Manawat\={u}, New Zealand.

\printbibliography

@book{lu1976singularity,
  title={Singularity theory and an introduction to catastrophe theory},
  author={Lu, Yung-Chen},
  isbn={9783540902218},
  lccn={76048307},
  series={Universitext (1979)},
  doi={10.1007/978-1-4612-9909-7},
  year={1976},
  publisher={Springer-Verlag}
}

@Inbook{Libermann1987,
author="Libermann, Paulette and Marle, Charles-Michel",
title="Symplectic manifolds and {P}oisson manifolds",
bookTitle="Symplectic Geometry and Analytical Mechanics",
year="1987",
publisher="Springer Netherlands",
address="Dordrecht",
pages="89--184",
isbn="978-94-009-3807-6",
doi="10.1007/978-94-009-3807-6_3"
}

@article{coisoPair,
Author = {Lorand, Jonathan and Weinstein, Alan},
Title = {({C}o)isotropic Pairs in {P}oisson and Presymplectic Vector Spaces},
Year = {2015},
Eprint = {1503.00169},
Howpublished = {SIGMA 11 (2015), 072, 10 pages},
Journal={SIGMA 11},
Volume={11},
Doi = {10.3842/SIGMA.2015.072}
}

@book{fukaya2010lagrangian,
  title={{L}agrangian Intersection Floer Theory: Anomaly and Obstruction, Part I},
  author={Fukaya, Kenji},
  isbn={9780821852491},
  series={AMS/IP studies in advanced mathematics},
  year={2010},
  publisher={American Mathematical Society}
}

@Inbook{LagrangianInterTheo,
author={Eliashberg, Y. and Gromov, M.},
title={{L}agrangian intersection theory: finite-dimensional approach},
bookTitle={Geometry of Differential Equations},
pages={27--118},
isbn={9780821810941},
lccn={98204441},
series={AMS Translations Series},
year={1998},
publisher={American Mathematical Society}
}

@article{Frauenfelder2004,
    author = {Frauenfelder, Urs},
    title = "{The {Arnold-Givental} conjecture and moment {F}loer homology}",
    journal = {International Mathematics Research Notices},
    volume = {2004},
    number = {42},
    pages = {2179-2269},
    year = {2004},
    month = {01},
    abstract = "{We prove the Arnold-Givental conjecture for a class of Lagrangian submanifolds in Marsden-Weinstein quotients which are fixed point sets of an antisymplectic involution. For these Lagrangians, the Floer homology cannot in general be defined by standard means due to the bubbling phenomenon. To overcome this difficulty, we consider moment Floer homology whose boundary operator is defined by counting solutions of the symplectic vortex equations on the strip. The moduli space of these solutions has better compactness properties than the moduli space of the original Floer equations.}",
    issn = {1073-7928},
    doi = {10.1155/S1073792804133941},
    bsd-url = {https://doi.org/10.1155/S1073792804133941},
    bsd-eprint = {https://academic.oup.com/imrn/article-pdf/2004/42/2179/2148198/2004-42-2179.pdf}
}

@book{poston1978catastrophe,
  title={Catastrophe Theory and Its Applications},
  author={Poston, T. and Stewart, I.},
  isbn={9780486692715},
  lccn={96021795},
  series={Dover Books on Mathematics},
  year={1978},
  publisher={Dover Publications}
}

@article{bifurHampaper,
  author={McLachlan, Robert I. and Offen, Christian},
  title={Bifurcation of solutions to {H}amiltonian boundary value problems},
  journal={Nonlinearity},
  volume={31},
  number={6},
  pages={2895--2927},
  doi={10.1088/1361-6544/aab630},
  eprint={1710.09991},
  archivePrefix={arXiv},
  year={2018}
}

@article{PDEBifur,
  author={Kreusser, Lisa M. and McLachlan, Robert I. and Offen, Christian},
  title={Detection of high codimensional bifurcations in variational {PDE}s},
  journal={Nonlinearity},
  volume={33},
  number={5},
  pages={2335--2363},
  doi={10.1088/1361-6544/ab7293},
  eprint={1903.02659},
  archivePrefix={arXiv},
  bsd-url={https://doi.org/10.1088/1361-6544/ab7293},
  year={2020}
}

@article{numericalPaper,
Author = {McLachlan, Robert I. and Offen, Christian},
Title = {Preservation of bifurcations of {H}amiltonian boundary value problems under discretisation},
Year = {2020},
volume={20},
pages={1363-1400},
journal = {Foundations of Computational Mathematics (FoCM)},
Eprint = {1804.07468},
doi = {10.1007/s10208-020-09454-z},
eprint={1804.07468},
archivePrefix={arXiv}
}

@article{numericalPaperShort,
   title={Symplectic integration of boundary value problems},
   volume={81},
   ISSN={1572-9265},
   DOI={10.1007/s11075-018-0599-7},
   number={4},
   journal={Numerical Algorithms},
   publisher={Springer Science and Business Media LLC},
   author={McLachlan, Robert I. and Offen, Christian},
   year={2019},
   month={8},
   pages={1219--1233},
   eprint={1804.09042},
   archivePrefix={arXiv}
}

@article{obstructionPaper,
Author = {McLachlan, Robert I. and Offen, Christian},
Title = {Hamiltonian boundary value problems, conformal symplectic symmetries, and conjugate loci},
journal = {New Zealand Journal of Mathematics (NZJM)},
pages={83--99},
volume = {48},
Year = {2018},
url = {http://nzjm.math.auckland.ac.nz/index.php/Hamiltonian_Boundary_Value_Problems\%2C_Conformal_Symplectic_Symmetries\%2C_and_Conjugate_Loci},
eprint={1804.07479},
archivePrefix={arXiv}
}

@Article{MatherIII,
author="Mather, John N.",
title="Stability of ${C}^\infty$ mappings: {III}. {F}initely determined map-germs",
journal="Publications Math{\'e}matiques de l'Institut des Hautes {\'E}tudes Scientifiques",
year="1968",
month="12",
day="01",
volume="35",
number="1",
pages="127--156",
issn="1618-1913",
doi="10.1007/BF02698926",
bsd-url="https://doi.org/10.1007/BF02698926"
}

@article{Janeczko1995,
  title={Relative generic singularities of the exponential map},
  author={Janeczko, Stanis{\l}aw and Mostowski, Tadeusz},
  journal={Compositio Mathematica},
  volume={96},
  number={3},
  pages={345--370},
  year={1995},
  publisher={Groningen: P. Noordhoff, 1935-},
  url = {http://www.numdam.org/item?id=CM_1995__96_3_345_0}
}

@book{Arnold2012,
	doi = {10.1007/978-0-8176-8340-5},
	year = 2012,
	publisher = {Birkhäuser Boston},
	author = {Arnold, Vladimir I. and Gusein-Zade, S. M. and Varchenko, A. N.},
	title = {Singularities of Differentiable Maps, Volume 1}
}

@article{golubitsky1975contact,
  title={Contact equivalence for {L}agrangian manifolds},
  author={Golubitsky, Martin A. and Guillemin, Victor},
  journal={Advances in Mathematics},
  volume={15},
  number={3},
  pages={375--387},
  year={1975},
  doi = "10.1016/0001-8708(75)90143-7",
  publisher={Academic Press}
}

@Inbook{Marsden1999,
author="Marsden, Jerrold E. and Ratiu, Tudor S.",
title="Cotangent Bundles",
bookTitle="Introduction to Mechanics and Symmetry: A Basic Exposition of Classical Mechanical Systems",
year="1999",
publisher="Springer New York",
address="New York, NY",
pages="165--180",
isbn="978-0-387-21792-5",
doi="10.1007/978-0-387-21792-5_6",
bsd-url="https://doi.org/10.1007/978-0-387-21792-5_6"
}

@book{Arnold1992,
	doi = {10.1007/978-3-642-58124-3},
	year = 1992,
	isbn={978-3-642-58124-3},
	publisher = {Springer Berlin Heidelberg},
	author = {Arnold, Vladimir I.},
	title = {Catastrophe Theory}
}

@book{Arnold1994,
author="Arnold, Vladimir I. and Afrajmovich, V.S. and Il'yashenko, Y.S. and Shil'nikov, L.P.",
Title="Dynamical Systems V: Bifurcation Theory and Catastrophe Theory",
year="1994",
publisher="Springer Berlin Heidelberg",
address="Berlin, Heidelberg",
isbn="978-3-642-57884-7",
doi="10.1007/978-3-642-57884-7"
}

@article{Lomel2008,
	doi = {10.1088/0951-7715/21/3/007},
	year = 2008,
	month = {2},
	publisher = {{IOP} Publishing},
	volume = {21},
	number = {3},
	pages = {485--508},
	author = {Lomel\'i, H\'ector E and Meiss, James D  and Ram\'irez-Ros, Rafael},
	title = {Canonical {M}elnikov theory for diffeomorphisms},
	journal = {Nonlinearity}
}

@article{Haro2000,
	doi = {10.1088/0951-7715/13/5/304},
	year = 2000,
	month = {6},
	publisher = {{IOP} Publishing},
	volume = {13},
	number = {5},
	pages = {1483--1500},
	author = {Haro, \'Alex},
	title = {The primitive function of an exact symplectomorphism},
	journal = {Nonlinearity}
}

@book{brocker1975,
place={Cambridge}, 
series={London Mathematical Society Lecture Note Series}, 
title={Differentiable Germs and Catastrophes}, 
DOI={10.1017/CBO9781107325418}, 
bsd-url={https://doi.org/10.1017/CBO9781107325418},
publisher={Cambridge University Press}, 
author={Br{\"o}cker, Theodor}, 
year={1975}, 
collection={London Mathematical Society Lecture Note Series}
}

@book{Wassermann1974,
	doi = {10.1007/bfb0061658},
	year = 1974,
	publisher = {Springer Berlin Heidelberg},
	isbn = {978-3-540-38423-6},
	author = {Gordon Wassermann},
	title = {Stability of Unfoldings}
}

@article{Weinstein1971,
author = {Weinstein, Alan},
title = {Singularities of families of functions},
journal = {Differentialgeometrie im Grossen (Ber. Tagung, Math. Forschungsinst., Oberwolfach, 1969)},
volume = {4},
pages = {323-330},
publisher = {Ber. Math. Forschungsinst. Oberwolfach, Bibliographisches Inst. Mannheim},
year =  {1971} 
}

@book{WeinsteinBates,
	year = 1997,
	publisher = {American Mathematical Society},
	isbn = {978-0-8218-0798-9},
	author = {Bates, Sean and Weinstein, Alan},
	title = {Lectures on the Geometry of Quantization},
	volume = {8}
}

@misc{YoutubeSingularitiesAnimations,
        title = {Singularities Animations},
        year = {2019},
        organization = {Youtube},
        author = {Offen, Christian},
        howpublished = {\url{https://www.youtube.com/playlist?list=PLIp-UrijLTJ5m-3ZASHPurIkehiBuW_sO}},
	note = {Accessed 2021-05-02}
    }

@article{WeinstBvPHam,
 ISSN = {0003486X},
 author = {Weinstein, Alan},
 journal = {Annals of Mathematics},
 number = {3},
 pages = {377--410},
 publisher = {Annals of Mathematics},
 title = {Lagrangian Submanifolds and {H}amiltonian Systems},
 volume = {98},
 doi={10.2307/1970911},
 year = {1973}
}

@misc{lor2014coisotropic,
    title={Coisotropic Pairs},
    author={Lorand, Jonathan and Weinstein, Alan},
    year={2014},
    eprint={1408.5620},
    archivePrefix={arXiv},
    primaryClass={math.SG}
}

@article{Tougeron1968,
     author = {Tougeron, Jean-Claude},
     title = {Id\'eaux et fonctions diff\'erentiables. {I}},
     journal = {Annales de l'Institut Fourier},
     publisher = {Imprimerie Louis-Jean},
     address = {Gap},
     volume = {18},
     number = {1},
     year = {1968},
     pages = {177-240},
     doi = {10.5802/aif.281},
     zbl = {0188.45102},
     mrnumber = {39 \#2171},
     language = {fr}
}

@article{Weinstein1972,
     author = {Weinstein, Alan},
     title = {The invariance of {P}oincaré's generating function for canonical transformations},
     journal = {Inventiones mathematicae},
     publisher = {Springer},
     volume = {16},
     number = {3},
     year = {1972},
     pages = {202--213},
     doi = {10.1007/BF01425493}
}

@book{PhDThesis,
place={Manawat\={u}, New Zealand},
title={Analysis of {H}amiltonian boundary value problems and symplectic integration ({D}octoral {T}hesis)},
publisher={Massey University},
author={Offen, Christian},
doi = {10.13140/RG.2.2.34063.61607},
year={2020} }



\end{document}